\title[Extremal conformal structures]{Extremal Conformal Structures\\ on Projective Surfaces}
\author[T.~Mettler]{Thomas Mettler}
\address{Institut f\"ur Mathematik, Goethe-Universit\"at Frankfurt, 60325 Frankfurt am Main, Germany}
\email{mettler@math.uni-frankfurt.de,mettler@math.ch}
\date{December 18, 2018}
\thanks{A part of the research for this article was carried out while the author was visiting the Mathematical Institute at the University of Oxford on a postdoctoral fellowship of the Swiss NSF PA00P2\_142053. The author would like to thank the Mathematical Institute for its hospitality.}
\keywords{projective structures, Weyl connections, Hitchin component, moving frames}
\newcommand{\rot}{r\e^{\i\phi}}
\newcommand{\con}{{}^{[g]}\nabla}
\newcommand{\M}{M}
\newcommand{\Ym}{h_{\mathfrak{p}}}
\newcommand{\F}{F}
\newcommand{\projb}{P}
\subjclass[2010]{53A20, 53C28, 58E20}
\begin{document}

\begin{abstract}
We introduce a new functional $\mathcal{E}_{\mathfrak{p}}$ on the space of conformal structures on an oriented projective manifold $(M,\mathfrak{p})$. The nonnegative quantity $\mathcal{E}_{\mathfrak{p}}([g])$ measures how much $\mathfrak{p}$ deviates from being defined by a $[g]$-conformal connection. In the case of a projective surface $(\Sigma,\mathfrak{p})$, we canonically construct an indefinite K\"ahler--Einstein structure $(h_{\mathfrak{p}},\Omega_{\mathfrak{p}})$ on the total space $Y$ of a fibre bundle over $\Sigma$ and show that a conformal structure $[g]$ is a critical point for $\mathcal{E}_{\mathfrak{p}}$ if and only if a certain lift $\widetilde{[g]} : (\Sigma,[g]) \to (Y,h_{\mathfrak{p}})$ is weakly conformal. In fact, in the compact case $\mathcal{E}_{\mathfrak{p}}([g])$ is -- up to a topological constant -- just the Dirichlet energy of $\widetilde{[g]}$. As an application, we prove a novel characterisation of properly convex projective structures among all flat projective structures. As a by-product, we obtain a Gauss--Bonnet type identity for oriented projective surfaces.        
\end{abstract}

\maketitle

\tableofcontents

\section{Introduction}

A~\textit{projective structure} on an $n$-manifold $M$ is an equivalence class $\mathfrak{p}$ of torsion-free connections on the tangent bundle $TM$, where two connections are called projectively equivalent if they share the same unparametrised geodesics. A manifold $M$ equipped with a projective structure $\mathfrak{p}$ will be called a~\textit{projective manifold}. A~\textit{conformal structure} on $M$ is an equivalence class $[g]$ of Riemannian metrics on $M$, where two metrics are called conformally equivalent if they differ by a scale factor. Naively, one might think of projective and conformal structures as formally similar, since both arise by defining a notion of equivalence on a geometric structure. However, the formal similarity is more substantial. For instance, Kobayashi has shown~\cite{MR0355886} that both projective -- and conformal structures admit a treatment as Cartan geometries with $|1|$-graded Lie algebras. Here we exploit the fact that both structures give rise to affine subspaces modelled on $\Omega^1(M)$ of the infinite-dimensional affine space $\mathfrak{A}(M)$ of torsion-free connections on $M$. Indeed, it is a classical result due to Weyl~\cite{zbMATH02603060} that two torsion-free connections on $TM$ are projectively equivalent if and only if their difference -- thought of as a section of $S^2(T^*M)\otimes TM$ -- is pure trace. Consequently, the representative connections of a projective structure $\mathfrak{p}$ on $M$ define an affine subspace $\mathfrak{A}_{\mathfrak{p}}(M)$ which is modelled on $\Omega^1(M)$. Moreover, it follows from Koszul's identity, that the torsion-free connections preserving a conformal structure $[g]$ on $M$ are of the form
$$
{}^g\nabla+g\otimes \beta^{\sharp}-\beta\otimes\mathrm{Id}-\mathrm{Id}\otimes\beta,
$$
with $g \in [g]$, $\beta \in \Omega^1(M)$ and where ${}^g\nabla$ denotes the Levi-Civita connection of $g$. Hence, the space of torsion-free $[g]$-conformal connections on $TM$ is an affine subspace $\mathfrak{A}_{[g]}(M)$ modelled on $\Omega^1(M)$ as well. It is an elementary computation to check that if $\mathfrak{A}_{[g]}(M)$ and $\mathfrak{A}_{\mathfrak{p}}(M)$ intersect, then they do so in a unique point. Therefore, we may ask if in general one can distinguish a point in $\mathfrak{A}_{\mathfrak{p}}(M)$ and a point in $\mathfrak{A}_{[g]}(M)$ which are `as close as possible'. This is indeed the case. More precisely, we show that the choice of a conformal structure $[g]$ on $(M,\mathfrak{p})$ determines a $1$-form $A_{[g]}$ on $M$ with values in the endomorphisms of $TM$, as well as a unique $[g]$-conformal connection ${}^{[g]}\nabla \in \mathfrak{A}_{[g]}(M)$ so that ${}^{[g]}\nabla+A_{[g]} \in \mathfrak{A}_{\mathfrak{p}}(M)$. The $1$-form $A_{[g]}$ appeared previously in the work of Matveev \& Trautman~\cite{MR3212870} and may be thought of as the `difference' between $\mathfrak{p}$ and $[g]$. In particular, if $M$ is oriented, we obtain a $\mathrm{Diff}(M)$-invariant functional
$$
\mathcal{F}(\mathfrak{p},[g])=\int_{M}|A_{[g]}|^n_g d \mu_g.
$$
Fixing a projective structure $\mathfrak{p}$ on $M$, we may consider the functional $\mathcal{E}_{\mathfrak{p}}=\mathcal{F}(\mathfrak{p},\cdot)$, which is a functional on the space $\mathfrak{C}(M)$ of conformal structures on $M$ only. It is natural to study the infimum of $\mathcal{E}_{\mathfrak{p}}$ among all conformal structures on $M$, and to ask whether there is actually a minimising conformal structure which achieves this infimum. This infimum -- which may be considered as a measure of how far $\mathfrak{p}$ deviates from being defined by a conformal connection -- is a new global invariant for oriented projective manifolds. 

Of particular interest is the case of surfaces where $\mathcal{E}_{\mathfrak{p}}$ is just the square of the $L^2$-norm of $A_{[g]}$ taken with respect to $[g]$ and this is the case that we study in detail in this article. It turns out that in the surface case the functional $\mathcal{E}_{\mathfrak{p}}$ also arises from a rather different viewpoint, which simplifies the computation of its variational equations by using the technique of moving frames.    

Inspired by the twistorial construction of holomorphic projective structures by Hitchin \cite{MR699802}, it was shown in~\cite{MR728412},~\cite{MR812312} how to construct a `twistor space` for smooth projective structures. The choice of a projective structure $\mathfrak{p}$ on an oriented surface $\Sigma$ induces a complex structure on the total space of the disk bundle $Z \to \Sigma$ whose sections are conformal structures on $\Sigma$. In this sense, $\mathcal{E}_{\mathfrak{p}}([g])$ can be interpreted as measuring the failure of $[g](\Sigma) \subset Z$ to be a holomorphic curve in $Z$. We proceed to show that $\mathfrak{p}$ canonically defines an indefinite K\"ahler-Einstein structure $(h_{\mathfrak{p}},\Omega_{\mathfrak{p}})$ on a certain submanifold $Y$ of the projectivised holomorphic cotangent bundle $\mathbb{P}(T^*_{\C}Z^{1,0})$ of $Z$. Moreover, every conformal structure $[g] : \Sigma \to Z$ admits a lift $\widetilde{[g]} : \Sigma \to Y$ so that the variational equations can be expressed as follows:
\setcounter{section}{4}
\setcounter{thm}{7}

\begin{thm}
Let $(\Sigma,\mathfrak{p})$ be an oriented projective surface. A conformal structure $[g]$ on $\Sigma$ is extremal for $\mathfrak{p}$ if and only if $\widetilde{[g]} : (\Sigma,[g]) \to (Y,\Ym)$ is weakly conformal.  
\end{thm}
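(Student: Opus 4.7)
The plan is to leverage the energy identity stated in the abstract: for compact $\Sigma$,
\[
\mathcal{E}_{\mathfrak{p}}([g]) = E(\widetilde{[g]}) + C_{\mathfrak{p}},
\]
where $E$ is the Dirichlet energy of $\widetilde{[g]} : (\Sigma,[g]) \to (Y,\Ym)$ and $C_{\mathfrak{p}}$ depends only on the topology of $\Sigma$ and on $\mathfrak{p}$. Hence $[g]$ is extremal for $\mathcal{E}_{\mathfrak{p}}$ if and only if it is a critical point of the functional $[g] \mapsto E(\widetilde{[g]})$, and I would use the indefinite K\"ahler structure $(\Ym,\Omega_{\mathfrak{p}})$ on $Y$ to recast this critical-point equation as weak conformality of $\widetilde{[g]}$.

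The first ingredient is the K\"ahler decomposition of energy for maps from a Riemann surface: for any smooth $f : \Sigma \to Y$,
\[
E(f) \;=\; \int_{\Sigma} f^{\ast}\Omega_{\mathfrak{p}} \;+\; 2\int_{\Sigma} |\bar{\partial} f|^{2}_{\Ym},
\]
the $\bar{\partial}$-operator being defined by the complex structure from $[g]$. As $[g]$ varies in $\mathfrak{C}(\Sigma)$, the section $\widetilde{[g]}$ stays in a fixed homotopy class and $\Omega_{\mathfrak{p}}$ is closed, so the first integral is independent of $[g]$. Extremality of $[g]$ therefore reduces to the condition that $\int_{\Sigma} |\bar{\partial} \widetilde{[g]}|^{2}_{\Ym}$ be critical under variations of $[g]$.

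To carry out the variation concretely I would work on the $\mathrm{SO}(2)$-bundle $\F \to \Sigma$ of positively oriented $g$-orthonormal coframes. Using the structure equations for the Levi-Civita connection of $g$ and the canonical conformal connection $\con$, one writes the fibre functions encoding $A_{[g]}$ and, via the tautological construction of $\widetilde{[g]}$, the components of $\widetilde{[g]}^{\ast}\Ym$. In particular the Hopf differential $\Phi(\widetilde{[g]}) = \widetilde{[g]}^{\ast}\Ym^{(2,0)}$ becomes an explicit expression in these fibre functions. A variation of $[g]$ in the direction of a Beltrami differential $\mu \in \Gamma(S^{2}_{0} T^{\ast}\Sigma)$ induces a change both of the domain $\bar{\partial}$-operator and of the map $\widetilde{[g]}$, and combining these two effects the first variation of $\int_{\Sigma} |\bar{\partial} \widetilde{[g]}|^{2}$ should reduce to a single pairing $c\int_{\Sigma}\langle \Phi(\widetilde{[g]}),\mu\rangle$ with a nonzero constant $c$; vanishing for all $\mu$ then forces $\Phi(\widetilde{[g]}) = 0$, which is precisely weak conformality.

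The main difficulty is controlling the map-variation contribution: since $\widetilde{[g]}$ depends on $[g]$ rather than being held fixed, the naive stress-energy argument does not apply directly, and one must verify that the infinitesimal change in $\widetilde{[g]}$ induced by $\mu$ is tangent to $Y$ in precisely the direction that reinforces the Hopf differential pairing rather than introducing new tensorial obstructions. This is where both the K\"ahler--Einstein property of $\Ym$ and the explicit form of the lift, built from $A_{[g]}$ on $\F$, play an essential role. Once this coincidence is confirmed on the frame bundle, the conclusion of the theorem follows.
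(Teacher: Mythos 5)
Your global framing is consistent with facts the paper does establish -- the energy identity you quote is Lemma~\ref{lem:dirichlet}, the homotopy invariance of $\int_{\Sigma}\widetilde{[g]}^*\Omega_{\mathfrak{p}}$ is Theorem~\ref{main} in the appendix, and $Q$ is indeed the Hopf differential of $\widetilde{[g]}$ (remark following Corollary~\ref{somecor}) -- but the argument has a genuine gap, and the reduction in your first four steps is in fact circular. On the lift, $\zeta_1$ is of type $(1,\!0)$ while $\zeta_2$ and $\zeta_3$ have $(0,\!1)$-parts $2\ov{a}\,\ov{\zeta_1}$ and $2\ov{q}\,\ov{\zeta_1}$; combining \eqref{pullbackmetric} with \eqref{keyid3} one checks that the $\ov{\partial}$-energy term in your K\"ahler decomposition is, up to the additive topological constant, precisely $\mathcal{E}_{\mathfrak{p}}([g])$ itself (equivalently: your decomposition is the sum of Lemma~\ref{lem:dirichlet} and Theorem~\ref{main}, rearranged). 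So ``criticality of $\int_{\Sigma}|\ov{\partial}\widetilde{[g]}|^2$'' is not a reduction of the problem; it \emph{is} the problem, and everything rests on your step 5, which you do not carry out. Because the lift $\widetilde{[g]}$ co-varies with $[g]$, the first variation a priori contains, besides the pairing of the Beltrami differential $\mu$ with the Hopf differential, a term pairing the induced variation vector field of the map with its tension field; $\widetilde{[g]}$ is not assumed harmonic, so there is no a priori reason this term vanishes or ``reinforces'' the Hopf pairing. The assertion that the total variation collapses to $c\int_{\Sigma}\langle \Phi(\widetilde{[g]}),\mu\rangle$ is exactly the hard content of the theorem, and you state it as a hope (``once this coincidence is confirmed'') rather than prove it. The paper proves it directly by Bryant's moving-frames technique: it sets $\mathsf{A}=-\frac{\i}{2}\zeta_2\wedge\ov{\zeta_2}$ on $\Sigma\times\projb\times(-\eps,\eps)$, computes $\d\mathsf{A}$ from the structure equations~\eqref{righttranskaeinf}, and obtains via Cartan's formula the first-variation formula~\eqref{firstvariation}, $f^{\prime}(0)=\i\int_{\Omega}\left(qB+\ov{q}\,\ov{B}\right)\zeta_1\wedge\ov{\zeta_1}$, in which the two effects you worry about are already combined -- no separation into domain and map variations is ever needed.

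Two further omissions. First, even granted a first-variation formula of this shape, you must show that \emph{every} compactly supported section $B$ of $\ov{K_{\Sigma}}\otimes K_{\Sigma}^{-1}$ is realised by an honest family of conformal structures $[g]_t$; the paper does this with the conformally invariant functions $G,H$ on Cartan's bundle and the family cut out by $G_t=G-tBH$. Without this surjectivity step, vanishing of the variation in the realisable directions does not force $q\equiv 0$. Second, the theorem concerns an arbitrary oriented projective surface, with extremality defined through compactly supported variations, whereas the energy identity you invoke requires $\Sigma$ compact (its proof uses the Gauss--Bonnet identity \eqref{eq:gaussbonnet}); as written your argument could at best cover the compact case, and localising the identity -- which the paper's intrinsically local computation over a compact domain $\Omega$ sidesteps entirely -- would require additional care with the $[g]$-dependent curvature term $\d\varphi-\d\ov{\varphi}$.
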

Here we say that $[g]$ is~\textit{extremal} for $\mathfrak{p}$ if it is a critical point of $\mathcal{E}_{\mathfrak{p}}$ with respect to compactly supported variations. Moreover, by weakly conformal we mean that there exists a smooth (and possibly vanishing) function $f$ on $\Sigma$ so that for some -- and hence any -- representative metric $g \in [g]$, we have $\widetilde{[g]}^*h_{\mathfrak{p}}=fg$. In fact, in the compact case $\mathcal{E}_{\mathfrak{p}}([g])$ is, up to the topological constant $-2\pi\chi(\Sigma)$, just the Dirichlet energy of $\widetilde{[g]}$. As a consequence, we obtain an optimal lower bound:
\setcounter{thm}{9}
\begin{thm}
Let $(\Sigma,\mathfrak{p})$ be a compact oriented projective surface. Then for every conformal structure $[g] : \Sigma \to Z$ we have
$$
\frac{1}{2}\int_{\Sigma}\tr_g \widetilde{[g]}^*\Ym\, d\mu_g\geqslant -2\pi\chi(\Sigma),
$$
with equality if and only if $\mathfrak{p}$ is defined by a $[g]$-conformal connection.\end{thm}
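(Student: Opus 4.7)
The plan is to deduce the theorem from the identity announced just before the statement, which in the compact case reads
$$
\mathcal{E}_{\mathfrak{p}}([g]) \;=\; \tfrac{1}{2}\int_\Sigma \tr_g \widetilde{[g]}^* h_{\mathfrak{p}}\,d\mu_g \;+\; 2\pi\chi(\Sigma).
$$
Granted this identity, the stated inequality is immediate by rearrangement from the fact that $\mathcal{E}_{\mathfrak{p}}([g]) = \int_\Sigma |A_{[g]}|_g^2\, d\mu_g \geqslant 0$. Equality forces $\mathcal{E}_{\mathfrak{p}}([g]) = 0$ and hence $A_{[g]} \equiv 0$ on $\Sigma$; by the defining property of $A_{[g]}$ recalled in the introduction, this says precisely that the canonical conformal connection $\con$ already lies in $\mathfrak{A}_{\mathfrak{p}}(\Sigma)$, so that $\mathfrak{p}$ is represented by a $[g]$-conformal connection. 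Conversely, if such a representative exists then $\mathfrak{A}_{\mathfrak{p}}(\Sigma) \cap \mathfrak{A}_{[g]}(\Sigma)$ is nonempty, and by the uniqueness of this intersection (noted in the introduction) $A_{[g]}$ must vanish, yielding equality.

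To establish the key identity I would work on a $\mathrm{CO}(2)$-reduction of the coframe bundle of $\Sigma$ adapted to $[g]$, on which $\con$, the endomorphism-valued $1$-form $A_{[g]}$, and the lift $\widetilde{[g]}$ all admit uniform descriptions via their structure equations, and then compute $\widetilde{[g]}^* h_{\mathfrak{p}}$ using the moving-frame realisation of $Y$ as a submanifold of $\mathbb{P}(T^*_{\C}Z^{1,0})$ endowed with its canonical indefinite K\"ahler--Einstein metric. The expected outcome is an explicit expression for $\widetilde{[g]}^* h_{\mathfrak{p}}$ whose $g$-trace decomposes into the pointwise non-negative quantity $|A_{[g]}|_g^2$ (up to a universal positive factor) plus a curvature density built from $\con$. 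Integration against $d\mu_g$ and the Gauss--Bonnet theorem then collapse the curvature contribution to $-2\pi\chi(\Sigma)$, yielding the displayed identity.

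The main obstacle is this pullback computation. Since $h_{\mathfrak{p}}$ is assembled in several stages from $\mathfrak{p}$ as a K\"ahler--Einstein metric on a bundle over the twistor space $Z$, extracting a closed formula for $\widetilde{[g]}^* h_{\mathfrak{p}}$ in terms of data purely on $\Sigma$ requires unwinding how the fibre direction of $Y \to \Sigma$ encodes the affine parameter that picks out a $[g]$-conformal connection inside $\mathfrak{A}_{[g]}(\Sigma)$, and how $A_{[g]}$ appears in that encoding. Fixing the normalisations so that the curvature integral comes out to exactly $-2\pi\chi(\Sigma)$ is where the bulk of the work sits; the indefinite signature of $h_{\mathfrak{p}}$ is essential here, since it is what permits this topological correction to be negative and thereby renders the lower bound a genuine constraint rather than a vacuous one.
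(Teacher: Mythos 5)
Your proposal is correct and follows essentially the same route as the paper: the identity you take as your starting point is exactly Lemma~4.9, which the paper proves precisely as you sketch (pulling back $h_{\mathfrak{p}}$ to the $\mathrm{CO}(2)$-reduction $P'_{[g]}\simeq F^+_{[g]}$, decomposing the trace density into $|A_{[g]}|^2_g\,d\mu_g$ plus the curvature $\frac{\i}{2}(\d\varphi-\d\ov{\varphi})$ of the induced connection on $K_\Sigma^{-1}$, and applying Gauss--Bonnet), and the paper then states the theorem as an immediate consequence of that lemma together with Theorem~2.4(v), which is your equality-case argument via $A_{[g]}\equiv 0$.
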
   

We then turn to the problem of finding non-trivial examples of  projective structures for which $\mathcal{E}_{\mathfrak{p}}$ admits extremal conformal structures. The conformal connection ${}^{[g]}\nabla$ determined by the choice of a conformal structure $[g]$ on $(\Sigma,\mathfrak{p})$ may equivalently be thought of as a torsion-free connection $\varphi$ on the principal $\mathrm{GL}(1,\C)$-bundle of complex linear coframes of $(\Sigma,[g])$. In addition, the $1$-form $A_{[g]}$ turns out to be twice the real part of a section $\alpha$ of $K_{\Sigma}^2\otimes \ov{K_{\Sigma}^{*}}$, where $K_{\Sigma}$ denotes the canonical bundle of $(\Sigma,[g])$. We provide another interpretation of the variational equations by proving that $[g]$ is extremal for $\mathfrak{p}$ if and only if the quadratic differential $\nabla_{\varphi}^{\prime\prime}\alpha$ vanishes identically. Here $\nabla_{\varphi}$ denotes the connection induced by $\varphi$ on $K_{\Sigma}^2\otimes \ov{K_{\Sigma}^{*}}$ and $\nabla^{\prime\prime}_{\varphi}$ its $(0,\! 1)$-part. Applying the Riemann--Roch theorem, it follows that a projective structure $\mathfrak{p}$ on the $2$-sphere $S^2$ admits an extremal conformal structure if and only if $\mathfrak{p}$ is defined by a conformal connection. 

While there are no non-trivial critical points for projective structures on the $2$-sphere, the situation is quite different for surfaces with negative Euler characteristic. Indeed, the condition of having a vanishing quadratic differential appeared previously in the projective differential geometry literature. In the celebrated paper ``\textit{Lie groups and Teichm\"uller space}''~\cite{MR1174252} Hitchin proposed a generalisation of Teichm\"uller space $\mathcal{H}_2$ by identifying a connected component $\mathcal{H}_n$ -- nowadays called the~\textit{Hitchin component} -- in the space of conjugacy classes
of representations of $\pi_1(\Sigma)$ into $\mathrm{PSL}(n,\R$).\footnote{More generally, representation into a real split simple Lie group.} Here $\Sigma$ denotes a compact oriented surface whose genus exceeds one. Using the theory of Higgs bundles~\cite{MR887284} and harmonic map techniques, Hitchin showed that the choice of a conformal structure $[g]$ on $\Sigma$ gives an identification
$$
\mathcal{H}_n\simeq \bigoplus_{\ell=2}^n H^0(\Sigma,K_{\Sigma}^{\ell}).
$$
Hitchin conjectured that $\mathcal{H}_3$ is the space of conjugacy classes of monodromy representations of (flat) properly convex projective structures, a fact later confirmed by Choi and Goldman~\cite{MR1145415} (the geometric interpretation of the Hitchin component for $n>3$ is a topic of current interest, c.f.~\cite{MR2981818},~\cite{MR3525096},~\cite{MR2373231} for recent results). Teichm\"uller space being para\-metri\-sed by holomorphic quadratic differentials, one might ask if there is a unique choice of a conformal structure on $\Sigma$, so that $\mathcal{H}_3$ is parametrised in terms of cubic holomorphic differentials only. This is indeed the case, as was shown independently by Labourie~\cite{MR2402597} and Loftin~\cite{MR1828223} (see also~\cite{MR3039771} and~\cite{MR3432157} for recent work treating the non-compact case and the case of convex polygons, as well as~\cite{MR3583351} treating the case of a general real split rank $2$ group). Furthermore, the conformal structure $[g]$ making the quadratic differential vanish is the conformal equivalence class of the so-called~\textit{Blaschke metric}, which arises by realising the universal cover of a properly convex projective surface as a complete hyperbolic affine $2$-sphere, see in particular~\cite{MR1828223}.

Calling a conformal structure $[g]$ on $(\Sigma,\mathfrak{p})$~\textit{closed}, if $\varphi$ induces a flat connection on $\Lambda^2(T^*\Sigma)$, we obtain a novel characterisation of properly convex projective structures among flat projective structures:
\setcounter{section}{5}
\setcounter{thm}{1}
\begin{thm}
Let $(\Sigma,\mathfrak{p})$ be a compact oriented flat projective surface of negative Euler characteristic. Suppose $\mathfrak{p}$ is properly convex, then the conformal equivalence class of the Blaschke metric is closed and extremal for $\mathcal{E}_{\mathfrak{p}}$. Conversely, if $\mathcal{E}_{\mathfrak{p}}$ admits a closed extremal conformal structure $[g]$, then $\mathfrak{p}$ is properly convex and $[g]$ is the conformal equivalence class of the Blaschke metric of $\mathfrak{p}$.       
\end{thm}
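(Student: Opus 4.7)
The plan is to translate the three hypotheses---flatness of $\mathfrak{p}$, closedness of $[g]$, and extremality---into structure equations on the principal $\mathrm{GL}(1,\mathbb{C})$-bundle $F\to\Sigma$ of $[g]$-conformal coframes, to recognise the resulting system as the integrability equations of a hyperbolic affine $2$-sphere, and then to apply the Labourie--Loftin parametrisation \cite{MR2402597,MR1828223} of properly convex projective structures by pairs consisting of a Riemann surface together with a holomorphic cubic differential. Working on $F$ with the tautological $\mathbb{C}$-valued $1$-form $\omega$, the conformal connection $\varphi$ and the equivariant function $\alpha$ representing the section of $K_\Sigma^2\otimes\overline{K_\Sigma^{-1}}$, I would first check that the extremality condition $\nabla_\varphi''\alpha=0$ says $d\alpha$ has no $\bar\omega$-component, i.e.\ $\alpha$ is holomorphic in $K_\Sigma^2\otimes\overline{K_\Sigma^{-1}}$, while closedness says the trace of the curvature of $\varphi$ vanishes and hence $\varphi$ is locally the Levi-Civita connection of a metric $g\in[g]$. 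Multiplication with the area form of $g$ furnishes an isomorphism $K_\Sigma^2\otimes\overline{K_\Sigma^{-1}}\simeq K_\Sigma^3$ under which $\alpha$ corresponds to a holomorphic cubic differential $C\in H^0(\Sigma,K_\Sigma^3)$, and the flatness identity $R^\varphi+d^\varphi A_{[g]}+A_{[g]}\wedge A_{[g]}=0$ reduces---upon tracing and using closedness---to the Gauss-type equation $K_g=-1+\|C\|_g^2$ of the hyperbolic affine $2$-sphere.

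For the forward direction, assume $\mathfrak{p}$ is properly convex. The Cheng--Yau realisation of the universal cover as a hyperbolic affine $2$-sphere (cf.\ \cite{MR1828223}) equips $\Sigma$ with a Blaschke metric $g_B$, a holomorphic Pick cubic form $C_B\in H^0(\Sigma,K_\Sigma^3)$ and a flat torsion-free connection in $\mathfrak{p}$ whose Blaschke decomposition is ${}^{g_B}\nabla+K_B$, with $K_B$ the symmetric trace-free endomorphism-valued $1$-form built from $C_B$ and $g_B$. Matching this against the canonical decomposition $\con+A_{[g_B]}$ supplied by the construction recalled in the introduction identifies $\con$ with ${}^{g_B}\nabla$ and $\alpha$ with a constant nonzero multiple of $C_B$. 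Since the Levi-Civita connection preserves the Riemannian volume form, $[g_B]$ is closed; since $C_B$ is holomorphic, $\nabla_\varphi''\alpha=0$ and $[g_B]$ is extremal.

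For the converse, suppose $\mathfrak{p}$ is flat and $[g]$ is closed and extremal. The translation above then provides a metric $g\in[g]$ whose Levi-Civita connection coincides with $\varphi$ and a holomorphic cubic differential $C\in H^0(\Sigma,K_\Sigma^3)$ representing $\alpha$, such that $(g,C)$ satisfies the hyperbolic affine sphere equation $K_g=-1+\|C\|_g^2$. Existence and uniqueness of the hyperbolic affine sphere with prescribed conformal class and holomorphic cubic differential (Cheng--Yau, Labourie \cite{MR2402597}, Loftin \cite{MR1828223}) then integrate $(g,C)$ to a properly convex projective structure $\mathfrak{p}'$ whose defining flat torsion-free connection is precisely $\varphi+A_{[g]}\in\mathfrak{p}$; hence $\mathfrak{p}=\mathfrak{p}'$ is properly convex and $[g]$ is the Blaschke conformal class. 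The main obstacle will be the bookkeeping of normalisations: one has to verify that the Gauss-type identity extracted from the three hypotheses is literally $K_g=-1+\|C\|_g^2$ rather than some rescaled variant, which requires carefully matching the definition of $A_{[g]}$, the isomorphism $K_\Sigma^2\otimes\overline{K_\Sigma^{-1}}\simeq K_\Sigma^3$ and the conventions in \cite{MR1828223} for the Pick form and the Blaschke metric.
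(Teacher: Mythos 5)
Your overall strategy --- translate the three hypotheses into structure equations on $F^+_{[g]}$, extract a holomorphic cubic differential and Wang's equation, then quote Labourie--Loftin --- is the paper's, and your forward direction (Levi-Civita connections have symmetric Ricci tensor, hence the Blaschke class is closed; $C$ holomorphic and $d\mu_g$ parallel, hence $\nabla^{\prime\prime}_{\varphi}\alpha=0$) is exactly the paper's argument. The converse, however, has a genuine gap at the step ``closedness \ldots{} hence $\varphi$ is locally the Levi-Civita connection of a metric $g\in[g]$'', which you then silently upgrade to a \emph{global} representative $g\in[g]$ with ${}^g\nabla=\nabla_\varphi$. Closedness only says that the Weyl $1$-form $\beta$ of $\nabla_\varphi={}^{(g,\beta)}\nabla$ is closed; since $\chi(\Sigma)<0$ forces $H^1(\Sigma,\R)\neq 0$, a closed Weyl connection need not be exact (take $\beta$ harmonic and nonzero), so there need be no metric in $[g]$ whose Levi-Civita connection is $\nabla_\varphi$, and even granting one, nothing in your setup selects the scale of $g$ that produces the constant $-1$. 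The paper fills precisely this hole using the projective data: extremality gives $q\equiv0$ and flatness gives $L\equiv0$, so the structure equation for $\d q$ in Lemma~\ref{lem:struceqexcon} forces $k^{\prime\prime}\equiv 0$; closedness makes $k$ real-valued, whence $k^{\prime}\equiv0$, so the $(1,\!1)$-form $\kappa$ represented by $k$ is $\varphi$-parallel; finally $\int_\Sigma \i\,\d\varphi=2\pi\chi(\Sigma)<0$ together with~\eqref{eq:struceqforA} forces $k$ to be positive somewhere, hence everywhere by parallelism, and the normalisation $k=1$ is what simultaneously produces the global metric $g$, the identity $\varphi+\overline{\varphi}=0$ on $F^+_g$, and the constant in $K_g=-1+2|C|^2_g$. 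Note that this is the only place where $\chi(\Sigma)<0$ enters the converse; your proposal never uses that hypothesis.

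Relatedly, your ``flatness identity'' $R^\varphi+\d^\varphi A_{[g]}+A_{[g]}\wedge A_{[g]}=0$ is false as stated: it asserts that the representative connection $\nabla_\varphi+A_{[g]}\in\mathfrak{p}$ is flat as a linear connection, whereas projective flatness of $\mathfrak{p}$ only means the Liouville curvature vanishes, $L\equiv 0$. Already for $C=0$ the representative connection is the Levi-Civita connection of a hyperbolic metric, whose curvature is nonzero. Consequently the Gauss-type equation cannot be obtained by ``tracing'' that identity; in the paper it appears only \emph{after} the normalisation $k=1$, from $\d\varphi=\left(|a|^2-\tfrac{1}{2}k\right)\zeta_1\wedge\overline{\zeta_1}$, i.e.\ the ``$-1$'' is the normalised Schouten term $k$, not a trace of a vanishing curvature; the correct use of flatness is the Codazzi-type information $k^{\prime\prime}=0$ extracted from $\d q=0$. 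Your concluding appeal to~\cite{MR2402597},~\cite{MR1828223} to identify $\mathfrak{p}$ with the properly convex structure built from $([g],C)$ is fine and matches the paper (via $\nabla_\varphi={}^{[g]}\nabla$ and $2\Re(\alpha)=A_{[g]}$ from Proposition~\ref{ppn:rerho=alpha}), but without the parallel-$\kappa$/Gauss--Bonnet argument the intermediate reduction to a pair $(g,C)$ satisfying Wang's equation does not go through.
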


We conclude with some remarks about the possible relation between our functional and the energy functional on Teichm\"uller space~\cite{MR887285},~\cite{MR2482204} which one can associate to a representation in the Hitchin component. Finally, as a by-product of our ideas, we obtain a Gauss--Bonnet type identity for oriented projective surfaces, which we briefly discuss in Appendix~\ref{Ap:GB}. 

\bigskip
\noindent\textbf{Acknowledgements.} I would like to thank Nigel Hitchin, Gabriel Paternain, Maciej Dunajski, Charles Frances, Karin Melnick and Stefan Rosemann for helpful conversations or correspondence regarding the topic of this article. The author is also grateful to the anonymous referee for helpful remarks. 

\setcounter{section}{1}

\section{Projective and conformal structures}

\subsection{Preliminaries}

Throughout the article, all manifolds are assumed to be connected, have empty boundary and unless stated otherwise, all manifolds and maps are assumed to be smooth, i.e., $C^{\infty}$. Also, we adhere to the convention of summing over repeated indices.  

\subsubsection{Notation}

For $\mathbb{F}=\mathbb{R},\mathbb{C}$ the field of real or complex numbers, we denote by $\mathbb{F}^n$ the space of column vectors of height $n$ and by $\mathbb{F}_n$ the space of row vectors of length $n$ whose entries are elements of $\mathbb{F}$. Also, we denote by $\mathbb{FP}^2=\left(\mathbb{F}^3\setminus\{0\}\right)/\mathbb{F}^*$ the space of one-dimensional linear subspaces in $\mathbb{F}^3$, that is, the real or complex projective plane. We denote by $\mathbb{S}^2=\left(\R^3\setminus\{0\}\right)/\R^+$ the space of oriented one-dimensional linear subspaces in $\R^3$, that is, the projective $2$-sphere. Likewise, we write $\mathbb{FP}_{2}=\left(\mathbb{F}_3\setminus\{0\}\right)/\mathbb{F}^*$ for the dual (real or complex) projective plane and $\mathbb{S}_2=\left(\R_3\setminus\{0\}\right)/\R^+$ for the dual projective $2$-sphere. For a non-zero vector $x \in \mathbb{F}^3$ we write $[x]$ for its corresponding point in $\mathbb{FP}^2$ and for a non-zero vector $\xi \in \mathbb{F}_3$ we write $[\xi]$ for its corresponding point in $\mathbb{FP}_2$. For non-zero vectors $x\in\R^3$ and $\xi \in \R_3$ we also use the notation $[x]_+$ and $[\xi]_+$ to denote the corresponding points in $\mathbb{S}^2$ and $\mathbb{S}_2$. Finally, we use the notation $F(\mathbb{F}_3)$ to denote the space of complete flags in $\mathbb{F}_3$ whose points are pairs $(\ell,\Pi)$ with $\Pi$ being an $\mathbb{F}$ two-dimensional linear subspace of $\mathbb{F}_3$ containing the line $\ell$.

\subsubsection{The coframe bundle} Recall that the~\textit{coframe bundle} of an $n$-ma\-ni\-fold $M$ is the bundle $\upsilon : F(T^*M) \to \M$ whose fibre at a point $p \in M$ consists of the linear isomorphisms $u : T_pM \to \R^n$. The group $\mathrm{GL}(n,\R)$ acts transitively from the right on each $\upsilon$-fibre by the rule $R_a(u)=u\cdot a =a^{-1}\circ u$ for all $a \in \mathrm{GL}(n,\R)$. This action turns $\upsilon : F(T^*M) \to M$ into a principal right $\mathrm{GL}(n,\R)$-bundle. The coframe bundle is equipped with a tautological $\R^n$-valued $1$-form $\omega=(\omega^i)$ defined by $\omega_u=u\circ \upsilon^{\prime}_u$. Note that $\omega$ satisfies the equivariance property $R_a^*\omega=a^{-1}\omega$ for all $a \in \mathrm{GL}(n,\R)$. The exterior derivative of local coordinates $x : U \to \R^n$ on $M$ defines a natural section $\tilde{x} : U \to F(T^*M)$ having the reproducing property $\tilde{x}^*\omega=\d x$. We will henceforth write $F$ instead of $F(T^*M)$ whenever $M$ is clear from the context.

\subsubsection{Associated bundles} Throughout the article we will frequently make use of the notion of an associated bundle of a principal bundle. The reader will recall that if $\pi : P \to M$ is a principal right $\mathrm{G}$-bundle and $(\rho,N)$ a pair consisting of a manifold $N$ and a homomorphism $\rho : \mathrm{G} \to \mathrm{Diff}(N)$ into the diffeomorphism group of $N$, then we obtain an associated fibre bundle with typical fibre $N$ and structure group $\mathrm{G}$ whose total space is $P\times_{\rho} N$, that is, the elements of $P\times_{\rho} N$ are pairs $(u,p)$ subject to the equivalence relation
$$
(u_1,p_1)\sim (u_2,p_2) \iff u_2=u_1\cdot g, \quad p_2=\rho(g^{-1})(p_1), \quad g \in \mathrm{G}. 
$$
A section $s$ of $P\times_{\rho} N$ is then given by a map $\sigma_{s} : P \to N$ which is equivariant with respect to the $\mathrm{G}$-right action on $P$ and the right action of $\mathrm{G}$ on $N$ induced by $\rho$. We say that $s$ is~\textit{represented by} $\sigma_{s}$. If $N$ is an affine/linear space and the $\mathrm{G}$-action induced by $\rho$ is affine/linear, then the associated bundle is an affine/vector bundle. 

\subsection{Projective structures} Recall that the set $\mathfrak{A}(\M)$ of torsion-free connections on the tangent bundle of an $n$-manifold $\M$ is the space of sections of an affine bundle $\mathrm{A}(\M) \to \M$ of rank $\frac{1}{2}n^2(n+1)$ which is modelled on the vector bundle $V=S^2(T^*M)\otimes TM$. We have a canonical trace mapping $\tr : V \to T^*M$ as well as an inclusion
$$
\iota : T^*M \to V, \quad \nu \mapsto \nu\otimes\mathrm{Id}+\mathrm{Id}\otimes \nu. 
$$
For every $v \in V$ we let $v_0$ denote its trace-free part, so that
$$
v_0=v-\frac{1}{(n+1)}\iota(\tr v).
$$
A projective structure $\mathfrak{p}$ on a manifold $M$ of dimension $n>1$ is an equivalence class of torsion-free connections on $TM$, where two connections are declared to be equivalent if they share the same unparametrised geodesics. Weyl~\cite{zbMATH02603060} observed the following: 
\begin{lem}\label{Weylfund}
Two torsion-free connections $\nabla$ and $\nabla^{\prime}$ on $T\M$ are projectively equivalent if and only if $(\nabla-\nabla^{\prime})_0=0$.
\end{lem}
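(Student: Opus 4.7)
My plan is to prove the two directions separately, both by reducing everything to the pointwise action of $\Lambda := \nabla' - \nabla \in \Gamma(S^2T^*M\otimes TM)$ on the velocity of a geodesic.

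For the "if" direction, suppose $\Lambda = \iota(\nu)$ for some $\nu \in \Omega^1(M)$. Let $\gamma$ be a $\nabla$-geodesic parametrised so that $\nabla_{\dot\gamma}\dot\gamma = 0$. Then
$$
\nabla'_{\dot\gamma}\dot\gamma \;=\; \nabla_{\dot\gamma}\dot\gamma + \iota(\nu)(\dot\gamma,\dot\gamma) \;=\; 2\,\nu(\dot\gamma)\,\dot\gamma,
$$
so the $\nabla'$-acceleration of $\gamma$ is proportional to $\dot\gamma$. A standard reparametrisation argument (solve the ODE $\ddot s/\dot s = 2\nu(\dot\gamma)$) then yields a parameter $s$ in which $\gamma$ is a $\nabla'$-geodesic, so $\nabla$ and $\nabla'$ have the same unparametrised geodesics.

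For the "only if" direction, I would argue pointwise. Fix $p \in M$ and $v \in T_pM$; then there is a $\nabla$-geodesic $\gamma$ through $p$ with $\dot\gamma(0) = v$ satisfying $\nabla_{\dot\gamma}\dot\gamma = 0$. Since $\gamma$ is an unparametrised $\nabla'$-geodesic, after reparametrisation its $\nabla'$-acceleration lies along $\dot\gamma$, which forces $\Lambda(v,v) = \mu_p(v)\,v$ for some scalar $\mu_p(v)$. The remaining task is to show that $\mu_p$ is linear in $v$ and that $\Lambda_p = \iota\bigl(\tfrac{1}{2}\mu_p\bigr)$. Homogeneity $\mu_p(\lambda v)=\lambda\mu_p(v)$ is immediate from the quadratic nature of $\Lambda$. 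For additivity, I would expand $\Lambda(v+tw,v+tw) = \mu_p(v+tw)(v+tw)$ in powers of $t$ and compare with the bilinear expansion $\Lambda(v,v)+2t\Lambda(v,w)+t^2\Lambda(w,w)$; matching the coefficient of $t^2$ forces $\mu_p(v+tw)$ to be affine in $t$, which yields linearity of $\mu_p$, and matching the coefficient of $t$ gives
$$
\Lambda(v,w) \;=\; \tfrac{1}{2}\bigl(\mu_p(v)\,w + \mu_p(w)\,v\bigr).
$$
Setting $\nu := \tfrac{1}{2}\mu$ (which is smooth since $\Lambda$ is) produces a $1$-form on $M$ with $\Lambda = \iota(\nu)$, equivalently $\Lambda_0 = 0$.

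The main technical point — and the only place any real work is needed — is the pointwise linear-algebra lemma that $\Lambda(v,v)\parallel v$ for all $v$ implies $\Lambda$ is pure trace; once this is in hand, both implications follow from the defining ODE of a geodesic. A minor subtlety I would address explicitly is smoothness of the resulting $1$-form $\nu$, which follows from the formula $\mu_p(v) = 2\nu(v)$ together with polarisation of $\Lambda$, ensuring that $\nu$ depends smoothly on $p$.
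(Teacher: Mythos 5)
Your proof is correct; note, though, that the paper does not prove this lemma at all --- it is quoted as Weyl's classical observation with a bare citation --- so there is no internal argument to compare against, and yours is the standard one. Both directions check out. In the ``if'' direction the ODE $\ddot s=2\nu(\dot\gamma)\dot s$ is exactly the right reparametrisation equation (and, being linear in $\dot s$, is always solvable with $\dot s>0$); to conclude that the two families of unparametrised geodesics \emph{coincide}, rather than one merely containing the other, either run the symmetric argument with $\nabla=\nabla'+\iota(-\nu)$ or invoke uniqueness of geodesics with prescribed initial direction. In the ``only if'' direction the pointwise lemma is, as you say, the heart of the matter, and your expansion works, with one implicit step worth making explicit: from $\mu_p(v)v+2t\,\Lambda(v,w)+t^2\mu_p(w)w=\mu_p(v+tw)(v+tw)$ one first reads off that $\Lambda(v,w)\in\mathrm{span}\{v,w\}$ for linearly independent $v,w$ (every other term lies in that span), and only then does comparing the $v$- and $w$-components, writing $\Lambda(v,w)=\beta v+\alpha w$, give $\mu_p(v+tw)=\mu_p(v)+2t\beta$ and $\mu_p(v+tw)=2\alpha+t\mu_p(w)$, whence $\alpha=\tfrac{1}{2}\mu_p(v)$, $\beta=\tfrac{1}{2}\mu_p(w)$ and linearity of $\mu_p$. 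This step needs $\dim M\geqslant 2$, consistent with the paper's standing assumption $n>1$. Two small simplifications: smoothness of $\nu$ is immediate from the contraction identity $\nu=\frac{1}{n+1}\tr(\nabla'-\nabla)$, since $\tr\iota(\nu)=(n+1)\nu$ --- this identity also furnishes the equivalence between $\Lambda=\iota(\nu)$ and the lemma's formulation $(\nabla-\nabla')_0=0$ that you use at the end; and the sign discrepancy between your $\Lambda=\nabla'-\nabla$ and the lemma's $\nabla-\nabla'$ is harmless by linearity of the trace-free projection.
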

Consequently, the set $\mathfrak{P}(\M)$ of projective structures on $\M$ is the space of sections of an affine bundle $\mathrm{P}(M) \to M$ of rank $\frac{1}{2}(n+2)n(n-1)$ which is modelled on the traceless part $V_0$ of the vector bundle $V$. We will use the notation $\mathfrak{p}(\nabla)$ for the projective structure $\mathfrak{p}$ that is defined by a connection $\nabla$. A consequence of Weyl's result is that the set of representative connections of a projective structure $\mathfrak{p}$ is an affine subspace $\mathfrak{A}_{\mathfrak{p}}(M)\subset \mathfrak{A}(M)$ of the space of torsion-free connections which is modelled on the space of $1$-forms on $M$. 

\subsection{Conformal structures}
A conformal structure on a manifold $M$ of dimension $n>1$ is an equivalence class $[g]$ of Riemannian metrics on $M$, where two metrics $g$ and $\hat{g}$ are declared to be equivalent if there exists a smooth   function $f$ on $M$ so that $\hat{g}=\e^{2f} g$. Equivalently, a conformal structure $[g]$ on $M$ is a (smooth) choice of a coframe for every point $p$ in $M$, well defined up to orthogonal transformation and scaling. Consequently, the set $\mathfrak{C}(M)$ of conformal structures on $M$ is the space of sections of $\mathrm{C}(M)=F/\left(\R^+\times\mathrm{O}(n)\right) \to M$, where $\R^+\times \mathrm{O}(n)$ is the subgroup of $\mathrm{GL}(n,\R)$ consisting of matrices $a$ having the property that $aa^t$ is a non-zero multiple of the identity matrix. 

A torsion-free connection $\nabla$ on $TM$ is called a~\textit{Weyl connection} or~\textit{conformal connection} for the conformal structure $[g]$ on $M$ if the parallel transport maps of $\nabla$ are angle-preserving with respect to $[g]$. A torsion-free connection $\nabla$ is $[g]$-conformal if for some (and hence any) representative metric $g \in [g]$ there exists a $1$-form $\beta$ on $M$ such that
$$
\nabla g= 2 \beta \otimes g. 
$$
It is a simple consequence of Koszul's identity that the $[g]$-conformal connections are of the form
\begin{equation}\label{confcon}
{}^{(g,\beta)}\nabla={}^g\nabla+g\otimes \beta^{\sharp}-\beta\otimes\mathrm{Id}-\mathrm{Id}\otimes \beta, 
\end{equation}
where $g\in [g]$, $\beta$ is a $1$-form on $M$ with $g$-dual vector field $\beta^{\sharp}$ and ${}^g\nabla$ denotes the Levi-Civita connection of $g$. Consequently, the set of $[g]$-conformal connections defines an affine subspace $\mathfrak{A}_{[g]}(M)\subset \mathfrak{A}(M)$ which is modelled on the space of $1$-forms on $M$ as well. For later usage we also record that for every smooth function $f$ on $M$ we have
$$
{}^{(\exp(2f)g,\beta +\d f)}\nabla={}^{(g,\beta)}\nabla,
$$
as the reader may easily verify using the identity~\cite[Theorem 1.159]{MR867684}
\begin{equation}\label{eq:confchangeLC}
{}^{\exp(2f)g}\nabla={}^g\nabla-g\otimes{}^g\nabla f+\iota(\d f).
\end{equation}
In particular, if $\beta$ is exact, so that $\beta=\d f$ for some smooth function $f$ on $M$, then ${}^{(g,\beta)}\nabla={}^{\exp(-2f)g}\nabla$ and hence the conformal connection determined by $(g,\beta)$ is the Levi-Civita connection of the metric $\e^{-2f}g$. 

We also use the notation ${}^{[g]}\nabla$ for a connection preserving the conformal structure $[g]$.

\subsection{Compatibility of projective and conformal structures}

Since both projective -- and conformal structures give rise to affine subspaces of $\mathfrak{A}(M)$ of the same type, we may ask how two such spaces intersect. 

\begin{lem}\label{intersect}
Let $[g]$ be a conformal -- and $\mathfrak{p}$ a projective structure on $M$. Then $\mathfrak{A}_{[g]}(M)$ and $\mathfrak{A}_{\mathfrak{p}}(M)$ intersect in at most one point. 
\end{lem}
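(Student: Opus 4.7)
The plan is to reduce the statement to a pointwise linear-algebra question in the fibres of $V=S^2(T^*M)\otimes TM$, by exploiting the two explicit parametrisations of the affine subspaces involved. Suppose $\nabla_1,\nabla_2\in\mathfrak{A}_{[g]}(M)\cap\mathfrak{A}_{\mathfrak{p}}(M)$ and set $W:=\nabla_1-\nabla_2\in\Gamma(V)$. Then $W$ admits two descriptions. First, Lemma~\ref{Weylfund} applied to membership in $\mathfrak{A}_{\mathfrak{p}}(M)$ forces $W_0=0$, so $W=\iota(\nu)$ for some $\nu\in\Omega^1(M)$. Second, after fixing any representative $g\in[g]$ and absorbing conformal factors into the $1$-form parameter via the identity ${}^{(\exp(2f)g,\beta+\d f)}\nabla={}^{(g,\beta)}\nabla$, formula~\eqref{confcon} permits us to write $W=g\otimes\gamma^\sharp-\iota(\gamma)$ for some $\gamma\in\Omega^1(M)$.

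Equating these two descriptions, I would prove $\gamma=\nu=0$ by applying the canonical trace map $\tr:V\to T^*M$ twice. A first application, using $\tr\circ\iota=(n+1)\mathrm{Id}$ together with the elementary identity $\tr(g\otimes\gamma^\sharp)=\gamma$, pins down $\nu$ as an explicit scalar multiple of $\gamma$ and collapses the problem to a single relation of the form $g\otimes\gamma^\sharp=\tfrac{1}{n+1}\iota(\gamma)$. A second contraction, now against the inverse metric $g^{-1}$, compares the resulting coefficients on each side; the discrepancy is nonzero precisely because $n\geqslant 2$, and forces $\gamma^\sharp=0$. Hence $\gamma=0$, so also $\nu=0$ and $W=0$, yielding $\nabla_1=\nabla_2$.

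Conceptually, this is just the observation that the two natural rank-$n$ sub-bundles of $V$ tangent to $\mathfrak{A}_{\mathfrak{p}}(M)$ and $\mathfrak{A}_{[g]}(M)$ at any given point intersect only in the zero section, and I do not anticipate a genuine obstacle beyond routine index bookkeeping. The one subtlety worth flagging is the legitimacy of writing both $\nabla_i$ with respect to a \emph{single} representative $g\in[g]$ in the second description of $W$; this is exactly what the rescaling identity recorded just before~\eqref{confcon} ensures.
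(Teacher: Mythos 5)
Your proposal is correct and takes essentially the same route as the paper: the paper likewise writes the difference of the two connections both as a pure-trace term $\iota(\Upsilon)$ (via Weyl's lemma) and via the conformal parametrisation with a fixed representative $g \in [g]$, takes the trace to solve for the pure-trace part, and then contracts with the dual metric to produce the same factor $(n+2)(n-1)$, nonzero for $n>1$. The single-representative subtlety you flag is handled identically in the paper, which fixes one metric $g$ and encodes both connections through $1$-forms $\beta$, $\hat{\beta}$.
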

\begin{proof}
Suppose the $[g]$-conformal connections $\con$ and ${}^{[g]}\hat{\nabla}$ are elements in $\mathfrak{A}_{\mathfrak{p}}(M)$. Then, by Lemma~\ref{Weylfund}, there exists a $1$-form $\Upsilon$ on $M$ so that
$$
\con={}^{[g]}\hat{\nabla}+\iota(\Upsilon). 
$$
Fixing a Riemannian metric $g$ defining $[g]$, we also have $1$-forms $\beta,\hat{\beta}$ on $M$ so that
$$
\con={}^g\nabla+g\otimes \beta^{\sharp}-\iota(\beta)\quad \text{and}\quad{}^{[g]}\hat{\nabla}={}^g\nabla+g\otimes \hat{\beta}^{\sharp}-\iota(\hat{\beta}).
$$
Applying these formulae we obtain
$$
\iota(\Upsilon+\beta-\hat{\beta})=g\otimes\left(\beta^{\sharp}-\hat{\beta}^{\sharp}\right).
$$
Taking the trace gives 
$$
(n+1)\left(\Upsilon+\beta-\hat{\beta}\right)=\beta-\hat{\beta}, 
$$
so that $\Upsilon=-\frac{n}{(n+1)}(\beta-\hat{\beta})$. Therefore we must have
$$
\iota\left(\beta-\hat{\beta}\right)=(n+1)g\otimes \left(\beta^{\sharp}-\hat{\beta}^{\sharp}\right).
$$
Contracting this last equation with the dual metric $g^\sharp$ implies
$$
0=(n+2)(n-1)\left(\beta^{\sharp}-\hat{\beta}^{\sharp}\right), 
$$
so that $\beta=\hat{\beta}$ provided $n>1$. It follows that $\Upsilon$ vanishes too, therefore $\con={}^{[g]}\hat{\nabla}$, as claimed.  
\end{proof}
\begin{rmk}\label{rmk:diffequimap}
Lemma~\eqref{intersect} raises the question whether or not one can still determine a unique point $\con \in \mathfrak{A}_{[g]}(M)$ and a unique point $\nabla \in \mathfrak{A}_{\mathfrak{p}}(M)$ in the general case, where $\mathfrak{A}_{[g]}(M)$ and $\mathfrak{A}_{\mathfrak{p}}(M)$ might not intersect. Formally speaking, we are interested in maps
$$
\psi=\left(\psi^1,\psi^2\right) : \mathfrak{P}(M) \times \mathfrak{C}(M) \to \mathfrak{A}(M)\times \mathfrak{A}(M) 
$$ satisfying the following properties:
\begin{itemize}
\item[(i)] $\psi^1(\mathfrak{p},[g]) \in \mathfrak{A}_{\mathfrak{p}}(M)$ and $\psi^2(\mathfrak{p},[g]) \in \mathfrak{A}_{[g]}(M)$;
\item[(ii)] If $\mathfrak{A}_{\mathfrak{p}}(M)\cap \mathfrak{A}_{[g]}(M)$ is non-empty, then $\psi^2(\mathfrak{p},[g])-\psi^1(\mathfrak{p},[g])=0$;
\item[(iii)] $\psi$ is equivariant with respect to the natural right action of the diffeomorphism group $\mathrm{Diff}(M)$ on $\mathfrak{P}(M)\times \mathfrak{C}(M)$ and $\mathfrak{A}(M)\times \mathfrak{A}(M)$. 
\end{itemize}
We will next discuss a geometrically natural and explicit map $\psi$ having these properties.
\end{rmk}
To this end let $g$ be a Riemannian metric on $M$ and $\nabla$ a torsion-free connection on $TM$. Consider the first-order differential operator for $g$ mapping into the space of $1$-forms on $M$ with values in $\mathrm{End}(TM)$ 
\begin{equation}\label{defalphag}
g \mapsto A_{[g]}=\left(\nabla-{}^g\nabla-g\otimes X_g\right)_0,
\end{equation}
where $X_g \in \Gamma(TM)$ is
\begin{equation}\label{defvect}
X_g=\frac{(n+1)}{(n+2)(n-1)}\,\tr\left(g^{\sharp}\otimes (\nabla-{}^g\nabla)_0\right).
\end{equation}
The following result is essentially contained in~\cite{MR3212870} -- except for (vi). For the convenience of the reader we include a proof.
\begin{thm}[Matveev \& Trautman,~\cite{MR3212870}]\label{quasilinop1}
The $1$-form $A_{[g]}$ has the following properties:
\begin{itemize}
\item[(i)] the endomorphism $A_{[g]}(X)$ is trace-free for all $X \in \Gamma(TM)$;
\item[(ii)] for all $X,Y \in \Gamma(TM)$ we have $A_{[g]}(X)Y=A_{[g]}(Y)X$;
\item[(iii)] $A_{[g]}$ only depends on the projective equivalence class of $\nabla$; 
\item[(iv)] $A_{[g]}$ only depends on the conformal equivalence class of $g$;
\item[(v)] $A_{[g]}\equiv 0$ if and only if there exists a $[g]$-conformal connection which is projectively equivalent to $\nabla$;
\item[(vi)] for $n=2$ the endomorphism $A_{[g]}(X)$ is symmetric with respect to $[g]$ for all $X \in \Gamma(TM)$;
\end{itemize}
\end{thm}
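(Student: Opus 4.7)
The strategy is to verify $(i)$--$(v)$ by direct computation using the formulas for $X_g$ and the trace-free projection, and to reserve the bulk of the work for the $2$-dimensional statement $(vi)$. For $(i)$ and $(ii)$: by construction $A_{[g]}$ is a section of $V_0\subset V=S^2(T^*M)\otimes TM$, so its components $A_{[g]}{}^k_{ij}$ are symmetric in the lower indices, which is $(ii)$; and the condition $A_{[g]}{}^k_{ki}=0$, combined with $(ii)$, is exactly $(i)$. For $(iii)$, by Lemma~\ref{Weylfund} a projective change has the form $\nabla^\prime=\nabla+\iota(\nu)$, and since $(\iota(\nu))_0=0$ both $(\nabla-{}^g\nabla)_0$ and $X_g$ are unaffected, hence so is $A_{[g]}$.

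For $(iv)$, let $\hat g=\exp(2f)g$. By~\eqref{eq:confchangeLC} and the vanishing $\iota(\d f)_0=0$ one has $(\nabla-{}^{\hat g}\nabla)_0=(\nabla-{}^g\nabla)_0+(g\otimes{}^g\nabla f)_0$. The algebraic identity
\[
\tr\bigl(g^\sharp\otimes(g\otimes V)_0\bigr)=\tfrac{(n+2)(n-1)}{n+1}\,V,
\]
valid for any vector field $V$ and verified by direct expansion, then gives $X_{\hat g}=\exp(-2f)(X_g+{}^g\nabla f)$; substituting into the definition of $A_{[\hat g]}$ produces $A_{[\hat g]}=A_{[g]}$. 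For $(v)$, by $(iii)$ it suffices to evaluate $A_{[g]}$ on $\nabla={}^{(g,\beta)}\nabla$: the same trace identity produces $X_g=\beta^\sharp$ and hence $A_{[g]}=-(\iota(\beta))_0=0$. Conversely, if $A_{[g]}\equiv 0$ then $\nabla={}^g\nabla+g\otimes X_g+\iota(\Upsilon)$ for some $1$-form $\Upsilon$, and setting $\beta:=g(X_g,\cdot)$ identifies $\nabla$ with ${}^{(g,\beta)}\nabla+\iota(\beta+\Upsilon)$, projectively equivalent to the $[g]$-conformal connection ${}^{(g,\beta)}\nabla$.

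The non-routine part is $(vi)$. Working at a point $p\in M$ in a $g$-orthonormal coframe and writing $T^k_{ij}=(\nabla-{}^g\nabla)^k_{ij}$, $\tau_j=T^k_{kj}$, $\sigma^k=\delta^{ij}T^k_{ij}$, the formulas simplify to $X_g^k=\tfrac{1}{4}(3\sigma^k-2\tau^k)$ and $A_{[g]}{}^k_{ij}=T^k_{ij}-\delta_{ij}X_g^k-\tfrac{1}{3}(\delta^k_i\mu_j+\delta^k_j\mu_i)$ with $\mu_j=\tau_j-(X_g)_j$. The claim $A_{[g]}{}^k_{ij}=A_{[g]}{}^j_{ik}$ reduces to
\[
T^k_{ij}-T^j_{ik}=\delta_{ij}X_g^k-\delta_{ik}X_g^j+\tfrac{1}{3}\bigl(\delta^k_i\mu_j-\delta^j_i\mu_k\bigr).
\]
Both sides are antisymmetric in $(k,j)$, so in two dimensions each is determined by its value at $(k,j)=(1,2)$, and the required equality follows from the elementary $2$-dimensional identity $T^1_{i2}-T^2_{i1}=\epsilon_{ij}(\tau^j-\sigma^j)$, obtained by expanding the definitions of $\tau$ and $\sigma$ and using the symmetry $T^k_{ij}=T^k_{ji}$.

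The main obstacle is $(vi)$: the identity genuinely relies on the low-dimensional coincidence that the antisymmetric-in-$(k,j)$ part of a symmetric $(2,1)$-tensor is a single $1$-form determined by the traces $\tau$ and $\sigma$, a fact which fails for $n\geqslant 3$. This reflects the structural phenomenon, developed further in the paper, that in dimension $2$ the tensor $A_{[g]}$ corresponds via $g$ to the real part of a section of $K_\Sigma^{2}\otimes\overline{K_\Sigma^{-1}}$; getting the precise coefficients in $X_g$ and $\mu$ to match the required symmetrisation is the key technical content of the theorem.
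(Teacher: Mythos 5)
Your proposal is correct, and on items (i)--(v) it runs essentially parallel to the paper's own proof: the paper likewise treats (i)--(ii) as immediate from the definition (your remark that (i) needs (ii) to pass from $A^k_{ki}=0$ to $\tr A_{[g]}(X)=0$ is a fair sharpening), proves (iii) by linearity of the trace-free projection, and establishes (iv)--(v) through exactly the computations you give, arriving at the same intermediate facts $X_{\hat g}=\exp(-2f)(X_g+{}^g\nabla f)$ and $X_g=\beta^{\sharp}$; your only organisational difference there is packaging the recurring contraction as the single identity $\tr\bigl(g^{\sharp}\otimes(g\otimes V)_0\bigr)=\frac{(n+2)(n-1)}{n+1}V$, which I checked ($nV-\frac{2}{n+1}V=\frac{(n+2)(n-1)}{n+1}V$) and which tidies the paper's repeated coefficient bookkeeping. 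The genuine divergence is in (vi): the paper implicitly uses (iv) and isothermal coordinates to assume $g=(\d x^1)^2+(\d x^2)^2$ locally, expresses $X_g$ and $A_{[g]}$ through the Christoffel symbols via the Liouville-type combinations $w_0,\dots,w_3$, and reads off the symmetry from the resulting coefficients $a_1,a_2$; you instead argue purely pointwise in a $g$-orthonormal coframe, with no appeal to (iv) or to special coordinates, reducing $A^k_{ij}=A^j_{ik}$ to an identity antisymmetric in $(k,j)$ which in dimension two is checked on the single component $(k,j)=(1,2)$. I verified your coefficients: $X_g^k=\frac{1}{4}(3\sigma^k-2\tau^k)$, hence $\mu_j=\frac{3}{2}\tau_j-\frac{3}{4}\sigma_j$, and at $(k,j)=(1,2)$ both sides of your displayed identity equal $\tau_2-\sigma^2$ for $i=1$ and $\sigma^1-\tau_1$ for $i=2$, consistent with $T^1_{i2}-T^2_{i1}=\epsilon_{ij}(\tau^j-\sigma^j)$ -- one small gloss is that you must also expand the right-hand side of your displayed equation in terms of $\tau,\sigma$ to see the match, a two-line step you leave implicit. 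As to what each route buys: the paper's coordinate computation yields explicit local formulas for $X_g$ and $A_{[g]}$ in terms of the classical Liouville coefficients, which resonate with notation used elsewhere in the projective literature; your frame-based argument is more invariant and makes transparent precisely where two-dimensionality enters, namely the one-dimensionality of the antisymmetric index pair, which the paper's closing ``elementary calculation'' leaves hidden.
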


\begin{proof} The properties (i) and (ii) are obvious from the definition. 

(iii) Recall that two affine torsion-free connections $\nabla$ and $\hat{\nabla}$ are projectively equivalent if and only if $(\nabla-\hat{\nabla})_0=0$. The claim follows from the linearity of the ``taking the trace-free part'' operation. 

(iv) Let $\hat{g}=\e^{2f}g$ for some smooth real-valued function $f$ on $M$. Then we have
$$
{}^{\hat{g}}\nabla={}^g\nabla-g\otimes{}^g\nabla f+\iota(\d f)
$$
and hence
\begin{align*}
\left(\nabla-{}^{\hat{g}}\nabla\right)_0&=\left(\nabla-{}^g\nabla\right)_0+\left(g\otimes {}^g\nabla f-\iota(\d f)\right)_0\\
&=\left(\nabla-{}^g\nabla\right)_0+\left(g\otimes {}^g\nabla f\right)_0\\
&=\left(\nabla-{}^g\nabla\right)_0+g\otimes {}^g\nabla f-\frac{1}{(n+1)}\iota(\d f).
\end{align*}
We obtain
\begin{align*}
X_{\hat{g}}&=\frac{(n+1)}{(n+2)(n-1)}\tr\left[\hat{g}^{\sharp}\otimes \left(\left(\nabla-{}^g\nabla\right)_0+g\otimes {}^g\nabla f-\frac{1}{(n+1)}\iota(\d f)\right)\right]\\
&=\e^{-2f}\left(X_g+\frac{n(n+1)}{(n+2)(n-1)}{}^g\nabla f-\frac{2}{(n+2)(n-1)}{}^g\nabla f\right)\\
&=\e^{-2f}\left(X_g+{}^g\nabla f\right).
\end{align*}
This gives
\begin{align*}
{}^{\hat{g}}\nabla+\hat{g}\otimes X_{\hat{g}}&={}^g\nabla-g\otimes{}^g\nabla f+\iota(\d f)+\e^{2f}g\otimes\e^{-2f}\left(X_g+{}^g\nabla f\right)\\
&={}^g\nabla+g\otimes X_g+\iota(\d f),
\end{align*}
so that
$$
\left({}^{\hat{g}}\nabla+\hat{g}\otimes X_{\hat{g}}\right)_0=\left({}^g\nabla+g\otimes X_g\right)_0,
$$
which shows that $A_{[g]}$ does indeed only depend on the conformal class of $g$. 

(v) Recall that the ${[g]}$-conformal connections are of the form
$$
\con={}^g\nabla+g\otimes \beta^{\sharp}-\iota(\beta),
$$
where $g$ is any metric in the conformal class ${[g]}$ and $\beta$ is some $1$-form on $M$. Therefore we have 
$$
\left(\con-{}^{g}\nabla\right)_0=\left(g\otimes\beta^{\sharp}\right)_0=g\otimes \beta^{\sharp}-\frac{1}{(n+1)}\iota (\beta)
$$
and thus as before we compute that $X_g=\beta^{\sharp}$. We obtain
\begin{align*}
A_{[g]}&=\left[{}^{[g]}\nabla-\left({}^g\nabla+g\otimes X_g\right)\right]_0\\
&=\left[{}^g\nabla+g\otimes \beta^{\sharp}-\iota(\beta)-{}^g\nabla-g\otimes \beta^{\sharp}\right]_0=\left[-\iota(\beta)\right]_0=0.
\end{align*}
Conversely, suppose $\mathfrak{p}$ is a projective structure for which there exists a conformal structure ${[g]}$ with $A_{[g]}\equiv 0$. Fixing a Riemannian metric $g \in [g]$ and a $\mathfrak{p}$-representative connection $\nabla$, we must have
$$
\nabla-({}^g\nabla+g \otimes X_g)=\iota(\beta),
$$
for some $1$-form $\beta$ on $M$. Adding $\iota((X_g)^{\flat})$ gives
$$
\nabla-\left({}^g\nabla+g\otimes X_g-\iota\left((X_g)^{\flat}\right)\right)=\iota\left(\beta+(X_g)^{\flat}\right),
$$
so that Lemma~\ref{Weylfund} implies that $\nabla$ and the $[g]$-conformal connection
$$
{}^g\nabla+g\otimes X_g-\iota\left((X_g)^{\flat}\right)
$$
are projectively equivalent. 

(vi) Let now $n=2$. We need to show that for $g \in [g]$ and all vector fields $X,Y,Z \in \Gamma(TM)$, we have
$$
g(A_{[g]}(X)Y,Z)=g(Y,A_{[g]}(X)Z). 
$$
Without loosing generality, we can assume that locally $g=(\d x^1)^2+(\d x^2)^2$ for coordinates $x=(x^1,x^2) : U \to \R^2$ on $M$. Let $\Gamma^i_{jk}$ denote the Christoffel symbols of $\nabla$ with respect to $x$. Since the Christoffel symbols of ${}^g\nabla$ vanish identically on $U$, we obtain with a simple calculation 
$$
X_g=-\frac{3}{4}\left(w_1+w_3\right)\frac{\partial}{\partial x^1}+\frac{3}{4}\left(w_0+w_2\right)\frac{\partial}{\partial x^2},
$$
where
$$
w_0=\Gamma^2_{11}, \quad 3w_1=-\Gamma^1_{11}+2\Gamma^2_{12}, \quad 3w_2=-2\Gamma^1_{12}+\Gamma^2_{22}, \quad w_3=-\Gamma^1_{22}.
$$
Likewise, we compute
\begin{multline*}
A_{[g]}=\frac{1}{2}\left(a_1e^{11}_{\phantom{11}1}-a_2e^{11}_{\phantom{11}2}-a_2e^{12}_{\phantom{12}1}-a_1e^{12}_{\phantom{12}2}\right.\\-\left.a_2e^{21}_{\phantom{21}1}-a_1e^{21}_{\phantom{21}2}-a_1e^{22}_{\phantom{22}1}+a_2e^{22}_{\phantom{22}2}\right)
\end{multline*}
where we write $e^{ij}_{\phantom{ij}k}=\d x^i\otimes \d x^j\otimes \frac{\partial}{\partial x^k}$
and
$$
a_1=\frac{1}{2}(w_3-3w_1), \quad a_2=\frac{1}{2}(3w_2-w_0). 
$$
The claim follows from an elementary calculation. 
\end{proof}
\begin{rmk}
By construction, the $1$-form $A_{[g]}$ vanishes identically if and only if $\nabla$ is projectively equivalent to a conformal connection. The necessary and sufficient conditions for a torsion-free connection to be projectively equivalent to a Levi-Civita connection were given in~\cite{MR2581355}. The reader may also consult~\cite{MR3158041} for the role of Einstein metrics in projective differential geometry. 
\end{rmk}
As a corollary to Theorem~\ref{quasilinop1} and Lemma~\ref{intersect} we obtain the following result.
\begin{cor}\label{uniquepoints}
For every conformal structure $[g]$ on the projective mani\-fold $(M,\mathfrak{p})$, there exists a unique $[g]$-conformal connection $\con$ so that $\con+A_{[g]}\in \mathfrak{p}$.  
\end{cor}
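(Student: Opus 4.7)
The plan is to construct the connection $\con$ explicitly from a representative pair and then invoke Lemma~\ref{intersect} for uniqueness. Fix an auxiliary metric $g\in[g]$ and a representative connection $\nabla\in\mathfrak{p}$. Motivated by the proof of Theorem~\ref{quasilinop1}(v), I would define
$$
\con \;:=\; {}^g\nabla + g\otimes X_g - \iota\bigl((X_g)^{\flat}\bigr),
$$
where $X_g$ is the vector field from~\eqref{defvect} and $(X_g)^\flat$ is its $g$-dual. Applying~\eqref{confcon} with $\beta=(X_g)^\flat$ (so that $\beta^\sharp = X_g$) immediately shows that $\con$ lies in $\mathfrak{A}_{[g]}(M)$.

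The next step is to verify the defining property $\con+A_{[g]}\in\mathfrak{p}$. By~\eqref{defalphag} the section $\nabla-{}^g\nabla-g\otimes X_g$ has trace-free part $A_{[g]}$, hence
$$
\nabla-{}^g\nabla-g\otimes X_g = A_{[g]} + \tfrac{1}{n+1}\iota(\tau)
$$
for the $1$-form $\tau=\tr(\nabla-{}^g\nabla-g\otimes X_g)$. Substituting into the expression for $\nabla-(\con+A_{[g]})$ produces
$$
\nabla-(\con+A_{[g]}) \;=\; \iota\!\left((X_g)^\flat + \tfrac{1}{n+1}\tau\right),
$$
which is pure trace. Weyl's Lemma~\ref{Weylfund} then gives $\con+A_{[g]}\in\mathfrak{p}$, establishing existence.

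For uniqueness, suppose $\con_1,\con_2\in\mathfrak{A}_{[g]}(M)$ both satisfy $\con_i+A_{[g]}\in\mathfrak{p}$. Then $(\con_1-\con_2)_0=((\con_1+A_{[g]})-(\con_2+A_{[g]}))_0=0$ by Lemma~\ref{Weylfund}, so $\con_1$ and $\con_2$ lie in a common projective class. Since they also both lie in $\mathfrak{A}_{[g]}(M)$, Lemma~\ref{intersect} forces $\con_1=\con_2$.

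There is no serious obstacle: the statement is essentially a packaging of the constructions already appearing in the proof of Theorem~\ref{quasilinop1}(v), combined with the intersection lemma. The only point worth flagging is that the explicit formula for $\con$ a priori depends on the choice of $g\in[g]$, but this ambiguity is automatically resolved by the uniqueness clause (alternatively, one may verify the conformal invariance directly using~\eqref{eq:confchangeLC} and the transformation rule $X_{\e^{2f}g}=\e^{-2f}(X_g+{}^g\nabla f)$ already computed in the proof of Theorem~\ref{quasilinop1}(iv)).
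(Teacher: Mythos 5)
Your proof is correct and takes essentially the same route as the paper: the identical explicit formula $\con={}^g\nabla+g\otimes X_g-\iota\bigl((X_g)^{\flat}\bigr)$, membership of $\con+A_{[g]}$ in $\mathfrak{p}$ via Weyl's Lemma~\ref{Weylfund} (the paper simply takes trace-free parts, while you write out the pure-trace remainder $\iota\bigl((X_g)^{\flat}+\tfrac{1}{n+1}\tau\bigr)$ explicitly, which is a cosmetic difference), and uniqueness via Lemma~\ref{intersect}. Your closing observation that the apparent dependence on the representative $g\in[g]$ is resolved by uniqueness (or directly by the transformation rule from Theorem~\ref{quasilinop1}(iv)) is sound and is left implicit in the paper.
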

Note that Corollary~\ref{uniquepoints} provides a unique point $\con \in \mathfrak{A}_{[g]}(M)$ and a unique point $\con+A_{[g]} \in \mathfrak{A}_{\mathfrak{p}}(M)$. We may define
$$
\psi\left(\mathfrak{p},[g]\right)=\left(\con+A_{[g]},\con\right).
$$
%where $\con$ is the unique conformal connection provided by Corollary~\ref{uniquepoints}. 
Since the map which sends a Riemannian metric to its Levi-Civita connection is equivariant with respect to the action of $\mathrm{Diff}(M)$ on the space of Riemannian metrics and on $\mathfrak{A}(M)$, it follows that the map $\psi$ has all the properties listed in Remark~\ref{rmk:diffequimap}.    
\begin{proof}[Proof of Corollary~\ref{uniquepoints}]
Let $\nabla$ be a connection defining $\mathfrak{p}$ and $g$ a smooth metric defining $[g]$. Set
$$
\con={}^g\nabla+g\otimes X_g-(X_g)^{\flat}\otimes \mathrm{Id}-\mathrm{Id}\otimes (X_g)^{\flat},
$$
where $X_g$ is defined as before (see~\eqref{defvect}). Then, property (i) of $A_{[g]}$ proved in Theorem~\ref{quasilinop1} implies that
$$
\left(\nabla-\left({}^{[g]}\nabla+A_{[g]}\right)\right)_0=\left(\nabla-({}^g\nabla+g\otimes X_g)\right)_0-A_{[g]}=A_{[g]}-A_{[g]}=0,
$$ 
so that $\con+A_{[g]}$ is projectively equivalent to $\nabla$ by Lemma~\ref{Weylfund}. If $\con^{\prime}$ is another $[g]$-conformal connection so that $\con^{\prime}+A_{[g]}$ defines $\mathfrak{p}$, then 
$$
\left(\con-\con^{\prime}\right)_0=0,
$$
hence $\con=\con^{\prime}$ by Lemma~\ref{intersect}. 
\end{proof}

\subsection{A diffeomorphism invariant functional}

We will henceforth assume $M$ to be oriented. For a pair $(\mathfrak{p},[g])$ consisting of a projective structure and a conformal structure on $M$, we consider the non-negative $n$-form $|A_{[g]}|^n_gd\mu_g$, where $g$ is any metric defining $[g]$, the $n$-form $d\mu_g$ denotes its volume form and where $A_{[g]}$ is computed with respect to $\mathfrak{p}$. For $f \in C^{\infty}(M)$ we have 
$$
|A_{[g]}|_{\e^{2f}g}=\e^{-f}|A_{[g]}|_{g} \quad \text{and} \quad d\mu_{\e^{2f}g}=e^{nf}d\mu_g,
$$
it follows that $|A_{[g]}|^n_gd\mu_g$ depends only on the conformal structure $[g]$.
Consequently, we obtain a non-negative functional
$$
\mathcal{F} : \mathfrak{P}(M)\times \mathfrak{C}(M) \to \R^{+}_{0}\cup \{\infty\}, \quad (\mathfrak{p},[g]) \mapsto \int_{M} |A_{[g]}|^n_gd\mu_g.
$$
By construction, $\mathcal{F}$ is invariant under simultaneous action of $\mathrm{Diff}(M)$ on $\mathfrak{P}(M)$ and $\mathfrak{C}(M)$.  

We may also fix a projective structure $\mathfrak{p}$ on $M$ and define $\mathcal{E}_{\mathfrak{p}}=\mathcal{F}[(\mathfrak{p},\cdot)]$ which is a functional on $\mathfrak{C}(M)$ only. We may study the infimum of $\mathcal{E}_{\mathfrak{p}}$ among all conformal structures on $M$, and ask whether there is actually a minimising conformal structure which achieves this infimum. The infimum
$$
\Gamma\delta(M,\mathfrak{p}):=\inf_{[g] \in \mathfrak{C}(M)}\mathcal{E}_{\mathfrak{p}}([g]),
$$ 
which may be considered as a measure of how far $\mathfrak{p}$ deviates from being defined by a conformal connection, is a new global invariant for oriented projective manifolds. Note that reversing the role of $\mathfrak{p}$ and $[g]$ does not give us a global invariant for conformal manifolds. Clearly, fixing a conformal structure and considering the infimum over $\mathfrak{P}(M)$ yields zero for every choice of conformal structure $[g]$.

\section{Projective surfaces and associated bundles}

A natural case to consider is $n=2$, where $\mathcal{F}$ is just the square of the $L^2$-norm of $A_{[g]}$ taken with respect to $[g]$. We will henceforth consider the surface case only. 

There are several natural geometric spaces fibering over an oriented projective surface which we will discuss next. Before doing so, we recall a result of Cartan~\cite{MR1504846}, which canonically associates a principal bundle together with a ``connection'' to every projective manifold. The reader interested in a description of Cartan's construction using modern language may also consult~\cite{MR0159284}. For additional background on Cartan geometries the reader may also consult~\cite{MR2532439}.  

\subsection{Cartan's normal projective connection}\label{sec:cartbund}

Let $\Sigma$ be an oriented surface and let $\mathrm{G}\simeq \R_2\rtimes \mathrm{GL}^+(2,\R)$ denote the two-dimensional orientation preserving affine group which we think of as the subgroup of $\mathrm{SL}(3,\R)$ consisting of matrices of the form
$$
b\rtimes a=\begin{pmatrix} \det a^{-1} & b \\ 0 & a\end{pmatrix},
$$
for $b \in \R_2$ and $a \in \mathrm{GL}^+(2,\R)$. 
We denote by $\upsilon : F^+\to \Sigma$ the principal right $\mathrm{GL}^+(2,\R)$-bundle of coframes that are orientation preserving with respect to the chosen orientation on $\Sigma$ and the standard orientation on $\R^2$. We define a right $\mathrm{G}$-action on $F^+\times \R_2$ by the rule
\begin{equation}\label{rightactioncartbundleisom}
(u,\xi)\cdot (b\rtimes a)=\left(\det a^{-1} a^{-1} \circ u,\xi a \det a + b\det a\right),
\end{equation}
for all $b\rtimes a \in \mathrm{G}$. Here $\xi : F^+ \times \R_2 \to \R_2$ denotes the projection onto the latter factor. This action turns $\pi : F^+\times \R_2 \to \Sigma$ into a principal right $\mathrm{G}$-bundle over $\Sigma$, where $\pi : F^+\times \R_2\to \Sigma$ denotes the natural basepoint projection. Suppose $\nabla$ is a torsion-free connection on $T\Sigma$ with connection $1$-form $\eta=(\eta^i_j)$ on $F^+$ so that we have the structure equations\footnote{Indices in round brackets are symmetrised over and indices in square brackets are anti-symmetrised over, for instance, we write $S_{(ij)}=\frac{1}{2}\left(S_{ij}+S_{ji}\right)$ and $S_{[ij]}=\frac{1}{2}\left(S_{ij}-S_{ji}\right)$ so that $S_{ij}=S_{(ij)}+S_{[ij]}$.}
\begin{align*}
\d \omega^i&=-\eta^i_j\wedge\omega^j,\\
\d\eta^i_j&=-\eta^i_k\wedge\eta^k_j+(\delta^i_{[k}S_{l]j}-S_{[kl]}\delta^i_j)\omega^k\wedge\omega^l,
\end{align*}
where $S=(S_{ij})$ represents the \textit{projective Schouten tensor} $\mathrm{Schout}(\nabla)$ of $\nabla$ and $\omega^i$ the components of the tautological $\R^n$-valued $1$-form $\omega$ on $F$. Recall that the Schouten tensor is defined as 
\begin{equation}\label{defschout}
\mathrm{Schout}(\nabla)=\mathrm{Ric}^{+}(\nabla)-\frac{1}{3}\mathrm{Ric}^{-}(\nabla),
\end{equation}
where $\mathrm{Ric}^{\pm}(\nabla)$ denote the symmetric and anti-symmetric part of the Ricci curvature of $\nabla$. On $\projb=F^+\times \R_2$ we define the $\mathfrak{sl}(3,\R)$-valued $1$-form
\begin{equation}\label{cartconpref}
\theta=\begin{pmatrix} -\frac{1}{3}\tr \eta-\xi \omega & \d \xi-\xi\eta-(S\omega)^t-\xi\omega\xi\\  \omega & \eta-\frac{1}{3}\mathrm{I}\tr \eta+\omega\xi  \end{pmatrix}.
\end{equation}
The reader may check that the pair $(\pi : \projb \to \Sigma,\theta)$ defines a~\textit{Cartan geometry} of type $(\mathrm{SL}(3,\R),\mathrm{G})$, that is, $\pi : \projb \to \Sigma$ is a principal right $\mathrm{G}$-bundle and $\theta$ is an $\mathfrak{sl}(3,\R)$-valued $1$-form on $\projb$ satisfying the following properties:
\begin{itemize}
\item[(i)] $\theta_u : T_u\projb \to \mathfrak{sl}(3,\R)$ is an isomorphism for every $u \in \projb$; 
\item[(ii)] $(R_{g})^*\theta=g^{-1}\theta g$ for every $g \in \mathrm{G}$; 
\item[(iii)] $\theta(X_v)=v$ for every fundamental vector field $X_v$ generated by an element $v$ in the Lie algebra of $\mathrm{G}$. 
\end{itemize}  
Moreover, writing $\theta=(\theta^i_j)_{i,j=0,1,2}$, the Cartan geometry $(\pi : \projb \to \Sigma,\theta)$ also satisfies:
\begin{itemize}
\item[(iv)] for every non-zero $x \in \R^2$, the integral curves of the vector field $X_{x}$ defined by the equations 
$$
\theta(X_x)=\begin{pmatrix} 0 & 0 \\ x & 0 \end{pmatrix}
$$
project to $\Sigma$ to become geodesics of $\mathfrak{p}$ and conversely all geodesics of $\mathfrak{p}$ arise in this way; 
\item[(v)] the $\pi$-pullback of an orientation compatible volume form on $\Sigma$ is a positive multiple of $\theta^1_0\wedge\theta^2_0$;
\item[(vi)] the curvature $2$-form $\Theta=\d\theta+\theta\wedge\theta$ is
\begin{equation}\label{realstruceqproj}
\Theta=\d\theta+\theta\wedge\theta=\begin{pmatrix} 0 & L_1\theta^1_0\wedge\theta^2_0 & L_2 \theta^1_0\wedge\theta^2_0 \\ 0 & 0 & 0 \\ 0 & 0 & 0\end{pmatrix}, 
\end{equation}
for unique curvature functions $L_i : \projb \to \R$. 
\end{itemize} 
\begin{rmk}
Cartan's bundle is unique in the following sense: If $(\hat{\pi} : \hat{\projb} \to \Sigma,\hat{\theta})$ is another Cartan geometry of type $\left(\mathrm{SL}(3,\R),\mathrm{G}\right)$ so that the properties (iv),(v) and (vi) hold, then there exists a $\mathrm{G}$-bundle isomorphism $\psi : \projb \to \hat{\projb}$ satisfying $\psi^*\hat{\theta}=\theta$.  
\end{rmk}
A projective structure $\mathfrak{p}$ on $\Sigma$ is called~\textit{flat} if every point $p \in \Sigma$ has a neighbourhood $U_p$ which is diffeomorphic to a subset of $\mathbb{RP}^2$ in such a way that the geodesics of $\mathfrak{p}$ contained in $U_p$ are mapped onto (segments) of projective lines $\mathbb{RP}^1\subset \mathbb{RP}^2$. Furthermore, a torsion-free connection $\nabla$ on $T\Sigma$ is called~\textit{projectively flat} if $\mathfrak{p}(\nabla)$ is flat. Using Cartan's connection, one can show that a projective structure $\mathfrak{p}$ is flat if and only if the functions $L_1$ and $L_2$ vanish identically. Another consequence of Cartan's result is that there exists a unique $1$-form $\lambda\in\Omega^1(\Sigma,\Lambda^2(T^*\Sigma))$ so that 
$$
\pi^*\lambda=(L_1\theta^1_0+L_2\theta^2_0)\otimes \theta^1_0\wedge\theta^2_0. 
$$
The $1$-form $\lambda$ was first discovered by R.~Liouville~\cite{19.0317.02}, hence we call $\lambda$ the~\textit{Liouville curvature} of $\mathfrak{p}$. In particular, the Liouville curvature is the complete obstruction to flatness of a two-dimensional projective structure.

\begin{ex}
Note that the left action of $\mathrm{SL}(3,\R)$ on $\R^3$ by matrix multiplication descends to define a transitive left action on the projective $2$-sphere $\mathbb{S}^2$. The stabiliser subgroup of the element $[(1\;0\;0)^t]$ is the group $\mathrm{G}\subset \mathrm{SL}(3,\R)$ so that $\mathbb{S}^2\simeq \mathrm{SL}(3,\R)/\mathrm{G}$. Taking $\theta$ to be the Maurer-Cartan form of $\mathrm{SL}(3,\R)$, the pair $(\pi : \mathrm{SL}(3,\R)\to \mathbb{S}^2,\theta)$ is a Cartan geometry of type $(\mathrm{SL}(3,\R),\mathrm{G})$ defining an orientation and projective structure $\mathfrak{p}_{\mathrm{can}}$ on $\mathbb{S}^2$ whose geodesics are the ``great circles''. Since $\d\theta+\theta\wedge\theta=0$, this projective structure is flat. We call $\mathfrak{p}_{\mathrm{can}}$ the~\textit{canonical flat projective structure} on $\mathbb{S}^2$. 
\end{ex}

\subsection{The twistor space}

Inspired by Hitchin's twistorial description of holomorphic projective structures on complex surfaces~\cite{MR699802}, it was shown in~\cite{MR728412,MR812312} how to construct a ``twistor space'' for smooth projective structures. For what follows it will be convenient to construct the twistor space in the smooth category by using the Cartan geometry of a projective surface. 

Let therefore $(\Sigma,\mathfrak{p})$ be an oriented projective surface with Cartan geometry $(\pi : \projb \to \Sigma,\theta)$. By construction, the quotient of $\projb$ by the normal subgroup $\R_2\rtimes\mathrm{\{Id\}}\subset \mathrm{G}$ is isomorphic to the bundle $\upsilon : F^+ \to \Sigma$ of orientation preserving coframes of $\Sigma$. In particular, the choice of a conformal structure $[g]$ on $\Sigma$ corresponds to a section of the fibre bundle $\mathrm{C}(\Sigma)\simeq \projb/\left(\R_2\rtimes\mathrm{CO}(2)\right) \to \Sigma$. Here $\mathrm{CO}(2)=\R^+\times\mathrm{SO}(2)$ is the linear orientation preserving conformal group. By construction, the typical fibre of the bundle $\mathrm{C}(\Sigma) \to \Sigma$ is diffeomorphic to $\mathrm{GL}^+(2,\R)/\mathrm{CO}(2)\simeq\mathrm{SL}(2,\R)/\mathrm{SO}(2)$, that is, the open unit disk $D^2\subset \C$.  

We write the elements of the group $\R_2\rtimes \mathrm{CO}(2)$ in the following form
$$
z\rtimes r\mathrm{e}^{\i \phi}=\begin{pmatrix} r^{-2} & \Re(z) & \Im(z)\\ 0& r\cos\phi & r \sin\phi \\ 0 & -r \sin\phi & r\cos\phi\end{pmatrix}, \quad z \in \C,\rot \in \C^*. 
$$
Property (iii) of the Cartan geometry $(\pi : \projb\to \Sigma,\theta)$ implies that the (real -- or complex-valued) $1$-forms on $\projb$ that are semibasic\footnote{Recall that a differential form $\alpha$ is said to be semibasic for the projection $\projb \to \mathrm{C}(\Sigma)$ if the interior product $X\inc \alpha$ vanishes for every vector field $X$ tangent to the fibres of $\projb \to \mathrm{C}(\Sigma)$.} for the quotient projection $\mu : \projb \to \mathrm{C}(\Sigma)$ are complex linear combinations of the complex-valued $1$-forms 
\begin{equation}\label{defformsZ}
\zeta_1=\theta^1_0+\i \theta^2_0, \qquad \zeta_2=\left(\theta^1_1-\theta^2_2\right)+\i\left(\theta^1_2+\theta^2_1\right)
\end{equation}
and their complex conjugates. The equivariance property (ii) of the Cartan geometry gives\begin{equation}\label{rightacttwist}
\left(R_{z\rtimes\rot}\right)^*\begin{pmatrix}\zeta_1 \\ \zeta_2\end{pmatrix}=\begin{pmatrix} \frac{1}{r^3} \e^{\i\phi} & 0\\ \frac{z}{r}\e^{\i\phi} & \e^{2\i\phi} \end{pmatrix}\begin{pmatrix}\zeta_1 \\ \zeta_2\end{pmatrix}.
\end{equation}
It follows that there exists a unique almost complex structure $\mathfrak{J}$ on $\mathrm{C}(\Sigma)$ having the property that a complex-valued $1$-form on $\projb$ is the pullback of a $(1,\! 0)$-form on $\mathrm{C}(\Sigma)$ if and only if it is a complex linear combination of $\zeta_1$ and $\zeta_2$. Indeed, locally we may use a section $s$ of the bundle $\mu : \projb \to \mathrm{C}(\Sigma)$ to pull down the forms $\zeta_1,\zeta_2$ onto the domain of definition $U\subset \mathrm{C}(\Sigma)$ of $s$. Since $\zeta_1,\zeta_2$ are semi-basic for the projection $\mu : \projb \to \mathrm{C}(\Sigma)$, it follows that the pulled down forms are linearly independent over $\C$ at each point of $U$. Hence we obtain a unique almost complex structure $\mathfrak{J}$ on $U$ whose $(1,\! 0)$-forms are $s^*\zeta_1,s^*\zeta_2$. The equivariance~\eqref{rightacttwist} implies that $\mathfrak{J}$ is independent of the choice of the section $s$ and extends to all of $\mathrm{C}(\Sigma)$. Using property (vi) of the Cartan geometry the reader may easily verify that
$$
\d\zeta_1=\d\zeta_2=0, \quad \text{mod}\quad \zeta_1,\zeta_2.  
$$
It follows from the Newlander-Nirenberg theorem that $\mathfrak{J}$ is integrable, thus giving $\mathrm{C}(\Sigma)$ the structure of a complex surface which we will denote by $Z$ and which -- abusing language -- we call the~\textit{twistor space} of the projective surface $(\Sigma,\mathfrak{p})$. 

\subsection{An indefinite K\"ahler-Einstein 3-fold}

From~\eqref{rightacttwist} it follows that the holomorphic cotangent bundle $T^*_{\C}Z^{1,0}\to Z$ is the bundle associated to $\mu : \projb \to Z$ via the complex two-dimensional representation $\rho : \R_2\rtimes \mathrm{CO}(2) \to \mathrm{GL}(2,\C)$ defined by the rule
\begin{equation}\label{firstrep}
\rho(z \rtimes \rot)(w_1\;w_2)=(w_1\;w_2)\begin{pmatrix} \frac{1}{r^3} \e^{\i\phi} & 0\\ \frac{z}{r}\e^{\i\phi} & \e^{2\i\phi} \end{pmatrix}
\end{equation}
for all $(w_1\;w_2) \in \C_2$. In particular, the form $\zeta_1$ is well defined on $Z$ up to complex-scale and hence may be thought of as a section of the projective holomorphic cotangent bundle $\mathbb{P}(T^*_{\C}Z^{1,0}) \to Z$. Abusing notation, we write $\zeta_1(Z) \subset \mathbb{P}(T^*_{\C}Z^{1,0})$ to denote the image of $Z$ under this section.
We now have:
\begin{lem}\label{cplx3fold}
There exists a unique integrable almost complex structure on the quotient $\projb/\mathrm{CO}(2)$ having the property that its $(1,\! 0)$-forms pull back to $\projb$ to become linear combinations of the forms
\begin{equation}\label{eq:defkaehforms}
\zeta_1=\theta^1_0+\i\theta^2_0,\quad \zeta_2=\left(\theta^1_1-\theta^2_2\right)+\i\left(\theta^1_2+\theta^2_1\right),\quad \zeta_3=\theta^0_1+\i\theta^0_2.
\end{equation}
Furthermore, with respect to this complex structure $\projb/\mathrm{CO}(2)$ is biholomorphic to $Y=\mathbb{P}(T^*_{\C}Z^{1,0})\setminus \zeta_1(Z)$ in such a way that the standard holomorphic contact structure on $Y$ is identified with the subbundle of $T_{\C}(\projb/\mathrm{CO}(2))^{1,0}$ defined by the equation $\zeta_2=0$. 
\end{lem}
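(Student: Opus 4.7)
The proof proceeds in three stages: construct the integrable almost complex structure, produce a diffeomorphism $\Psi$ to $Y$, and match both the complex and the contact structures.

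For the first stage, each of $\zeta_1,\zeta_2,\zeta_3$ must be shown semibasic for the projection $\projb \to \projb/\mathrm{CO}(2)$: by property (iii) of the Cartan connection, this reduces to evaluating $\zeta_i$ on the fundamental vector fields for the two generators of $\mathfrak{co}(2) \subset \mathfrak{sl}(3,\R)$ (the diagonal dilation $\mathrm{diag}(-2,1,1)$ and the skew block with $(2,1)$-entry $1$), and all three forms vanish on both by inspection. The equivariance property (ii), applied to the block-diagonal matrix representing $\rot$, yields $R_{\rot}^*\zeta_1 = r^{-3}\e^{\i\phi}\zeta_1$, $R_{\rot}^*\zeta_2 = \e^{2\i\phi}\zeta_2$, and (by computing the relevant entries of $g^{-1}\theta g$) $R_{\rot}^*\zeta_3 = r^{3}\e^{\i\phi}\zeta_3$, so the complex span $\langle \zeta_1,\zeta_2,\zeta_3\rangle_{\C}$ descends to a rank-three complex subbundle of $T^*_{\C}(\projb/\mathrm{CO}(2))$; since the six real and imaginary parts span the full cotangent space (of real dimension six on both sides), this is the $(1,\! 0)$-part of a unique almost complex structure. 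Integrability via Newlander--Nirenberg requires $\d\zeta_j \equiv 0 \pmod{\zeta_1,\zeta_2,\zeta_3}$: introducing the auxiliary real forms $\alpha = \theta^1_1 + \theta^2_2$ and $\beta = \theta^2_1 - \theta^1_2$ dual to the $\mathrm{CO}(2)$-fibre direction, and applying $\d\theta = -\theta\wedge\theta + \Theta$ with the curvature formula, one expresses each $\d\zeta_j$ as a sum whose summands are all wedged with some $\zeta_k$. This is the main technical step: one must regroup the entries of $\theta\wedge\theta$ into complex combinations carefully, so that every potentially offending piece (involving $\bar\zeta_i$, $\alpha$, or $\beta$) ends up absorbed into the ideal $\langle\zeta_1,\zeta_2,\zeta_3\rangle$.

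For the second stage, define $\Psi([u]) = (\mu(u),[\zeta_2|_u])$. Since $\zeta_2$ is $\mu$-semibasic (the fundamental-vector-field check extends to all generators of the fibre $\R_2\rtimes\mathrm{CO}(2)$), $\zeta_2|_u$ represents an element of $T^*_{\mu(u)}Z^{1,\! 0}$, and its equivariance $R_{\rot}^*\zeta_2 = \e^{2\i\phi}\zeta_2$ makes the projective class well defined on the quotient; linear independence of $\zeta_1|_u$ and $\zeta_2|_u$ places $\Psi$ inside $Y$. Bijectivity follows from the $\R_2$-part of the transformation rule~\eqref{rightacttwist}, namely $\zeta_2|_{u\cdot(z\rtimes 1)} = z\,\zeta_1|_u + \zeta_2|_u$: any $[\omega] = [a\zeta_1|_{u_0}+ b\zeta_2|_{u_0}]$ with $b\neq 0$ has the unique preimage $[u_0\cdot((a/b)\rtimes 1)]$ modulo $\mathrm{CO}(2)$.

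For the third stage, fix a local section $s:U\to\projb$ of $\mu$ and parametrize the $\R_2$-factor of the fibre by $z' \in \R_2\cong\C$; at $u=s(z)\cdot(z'\rtimes 1)$ one finds $\zeta_2|_u = z'\,s^*\zeta_1|_z + s^*\zeta_2|_z$ as an element of $T^*_z Z^{1,\! 0}$, which identifies $z'$ with the affine coordinate $w = w_1/w_2$ on the chart of $\mathbb{P}(T^*_{\C}Z^{1,\! 0})$ complementary to $\zeta_1(Z)$. Thus $\Psi$ reads $(z,z')\mapsto(z,z')$ in the induced holomorphic local coordinates; it is holomorphic because $\mu$-pullbacks of $(1,\! 0)$-forms on $Z$ lie in $\langle\zeta_1,\zeta_2\rangle$, while the fibre differential $\d z'$ agrees -- modulo $\langle\zeta_1,\zeta_2\rangle$ -- with the $\d\xi$-part of $\zeta_3 = \theta^0_1 + \i\theta^0_2$, so $\Psi$ pulls $(1,\! 0)$-forms to $(1,\! 0)$-forms. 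Being a smooth bijection between complex three-folds, $\Psi$ is a biholomorphism. The very same calculation gives $\Psi^*\lambda = \zeta_2$ (up to the $\mathrm{CO}(2)$-line-bundle scaling), where $\lambda = w_1\,s^*\zeta_1 + w_2\,s^*\zeta_2$ is the tautological $1$-form defining the standard holomorphic contact structure on $Y$, so the contact distribution corresponds to $\{\zeta_2=0\}$.
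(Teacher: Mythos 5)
Your first two stages track the paper closely: the semibasicity and equivariance argument for the almost complex structure is the same, and your map $\Psi([u])=(\mu(u),[\zeta_2|_u])$ is exactly the paper's $q\circ\tilde{\psi}$ in intrinsic language (the pair $(u,(0\;1))$ represents the covector $\zeta_2|_u$), with the same injectivity computation via the $\R_2$-translation rule from~\eqref{rightacttwist}. One caveat on stage one: you only promise that ``regrouping'' the entries of $\theta\wedge\theta$ absorbs every offending piece into the ideal, whereas those cancellations are precisely the content of the step -- e.g.\ that the coefficient of $\overline{\zeta_1}$ in $\d\zeta_1$ is a multiple of $\zeta_2$, and that the curvature only enters $\d\zeta_3$ through $\theta^1_0\wedge\theta^2_0$, which contains $\zeta_1$. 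The paper discharges this by exhibiting the explicit $\mathfrak{u}(2,1)$-valued form $\gamma$ with $\d\zeta=-\gamma\wedge\zeta$; your asserted congruences are correct, but as written they are announced rather than proved.

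The genuine gap is in stage three. The chart $(z,z')$ you build from the frame $\left(s^*\zeta_1,s^*\zeta_2\right)$ is a smooth chart on $\mathbb{P}(T^*_{\C}Z^{1,0})$, not a holomorphic one: the frame is merely smooth, and no choice of $s$ repairs this, since $\d\zeta_1=-\varphi\wedge\zeta_1+\frac{1}{2}\overline{\zeta_1}\wedge\zeta_2$ shows the line spanned by $\zeta_1$ is not even a holomorphic subbundle of $T^*_{\C}Z^{1,0}$. So the assertion that ``$\Psi$ reads $(z,z')\mapsto(z,z')$ in the induced holomorphic local coordinates'' is unjustified, and your fallback -- that $\d z'$ agrees with the $\d\xi$-part of $\zeta_3$ modulo $\langle\zeta_1,\zeta_2\rangle$ -- compares against the wrong coordinate: holomorphy of $\Psi$ requires controlling the differential of a genuinely holomorphic fibre coordinate, and the change from your frame to a holomorphic frame $(\d z^1,\d z^2)$ contributes derivatives of the transition functions which a priori contain antiholomorphic directions. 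The missing computation is short: writing $\mu^*\d z^i=p^i_j\zeta_j$ and differentiating, the structure equations $\d\zeta_1\equiv-\varphi\wedge\zeta_1$ and $\d\zeta_2\equiv\zeta_3\wedge\zeta_1-\left(\varphi-\overline{\varphi}\right)\wedge\zeta_2$ modulo the remaining terms give
$$
\d p^i_1\equiv p^i_1\varphi-p^i_2\zeta_3 \pmod{\zeta_1,\zeta_2},
$$
with no $\overline{\zeta}$-terms; since $\Psi(\tau(u))=\left[p^2_1\,\d z^1-p^1_1\,\d z^2\right]$, the holomorphic fibre coordinate $v=-p^2_1/p^1_1$ on the chart $\{p^1_1\neq 0\}$ then satisfies
$$
\d v\equiv\frac{p^1_1p^2_2-p^2_1p^1_2}{(p^1_1)^2}\,\zeta_3 \pmod{\zeta_1,\zeta_2},
$$
the $\varphi$-terms cancelling and the $\zeta_3$-coefficient being nonvanishing. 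Together with $\mu^*\d z^i\in\langle\zeta_1,\zeta_2\rangle$ this shows $\Psi^*$ carries a local basis of $(1,\!0)$-forms into $\langle\zeta_1,\zeta_2,\zeta_3\rangle$ with full rank, which, with your bijectivity, yields the biholomorphism; your final identification of the contact structure via $\Psi^*$ of the tautological form, which matches the paper's $\tilde{\psi}^*(w_1\zeta_1+w_2\zeta_2)=\zeta_2$, is then fine.
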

\begin{proof}
Again, it follows from the property (iii) of the Cartan connection $\theta$ that the $1$-forms that are semibasic for the quotient projection $\tau :\projb \to \projb/\mathrm{CO}(2)$ are linear combinations of the forms $\zeta_1,\zeta_2,\zeta_3$ and their complex conjugates. Here $\mathrm{CO}(2)\subset \mathrm{G}$ is the subgroup consisting of elements of the form $0\rtimes \rot$. Writing $\rot$ instead of $0\rtimes \rot$ and $\zeta=(\zeta_i)$, we compute from the equivariance property (ii) of $\theta$ that we have 
\begin{equation}\label{righttranskae}
\left(R_{r\e^{\i\phi}}\right)^*\begin{pmatrix}\zeta_1\\ \zeta_2\\ \zeta_3\end{pmatrix}=\begin{pmatrix}\frac{1}{r^3}\e^{\i\phi} & 0 & 0\\ 0 & \e^{2\i\phi} & 0 \\ 0 & 0 & r^3\e^{\i\phi}\end{pmatrix}\begin{pmatrix}\zeta_1\\ \zeta_2\\ \zeta_3\end{pmatrix}.
\end{equation}
It follows as before that there exists a unique almost complex structure $\mathfrak{J}$ on $\projb/\mathrm{CO}(2)$ having the property that its $(1,\! 0)$-forms pull back to $\projb$ to become linear combinations of the forms $\zeta_1,\zeta_2,\zeta_3$. Suppose there exists a $1$-form $\gamma=(\gamma_{ij})$ on $P$ with values in $\mathfrak{gl}(3,\C)$, so that $\d\zeta=-\gamma\wedge\zeta$, then it follows again from the Newlander--Nirenberg theorem that $\mathfrak{J}$ is integrable. Clearly, if such a $\gamma$ exists, then it is not unique. Defining $\hat{\gamma}=(\hat{\gamma}_{ij})$, with $\hat{\gamma}_{ij}=\gamma_{ij}+T_{ijk}\zeta_k$ for some complex-valued functions satisfying $T_{ijk}=T_{ikj}$ on $P$ will also work. We can exploit this freedom and make $\gamma$ take values in the Lie algebra 
$$
\mathfrak{u}(2,1)=\left\{\begin{pmatrix} w_1 & -\ov{w_2} & \i x_1\\ -w_3 & \i x_2 & w_2 \\ \i x_3 & \ov{w_3} & -\ov{w_1} \end{pmatrix}\,: \, w_1,w_2,w_3 \in \C\;\text{and}\; x_1,x_2,x_3 \in \R\right\}
$$ of the indefinite unitary group $\mathrm{U}(2,\! 1)$, where the model of $\mathrm{U}(2,\! 1)$ being used is the subgroup of $\mathrm{GL}(3,\C)$ that fixes the Hermitian form in $3$-variables
$$
H(z)=z_1\ov{z_3}+z_3\ov{z_1}+z_2\ov{z_2}. 
$$
Indeed, writing 
\begin{equation}\label{eq:defcurvandcon}
L=-\frac{1}{2}\left(L_2-\i L_1\right)\quad\text{and}\quad \varphi=-\frac{1}{2}\left(3\theta^0_0+\i(\theta^1_2-\theta^2_1)\right),
\end{equation}
we have
\begin{equation}\label{righttranskaeinf}
\d \zeta=-\gamma\wedge\zeta,
\end{equation}
where
$$
\gamma=\begin{pmatrix} \varphi & -\frac{1}{2}\ov{\zeta_1} & 0 \\-\frac{1}{2}\zeta_3 & \varphi-\ov{\varphi} & \frac{1}{2}\zeta_1\\ L\ov{\zeta_1}-\ov{L}\zeta_1 &\frac{1}{2}\ov{\zeta_3} &-\ov{\varphi}\end{pmatrix}, 
$$
as the reader can verify by using the definitions~\eqref{eq:defkaehforms},\eqref{eq:defcurvandcon} and the structure equations~\eqref{realstruceqproj}. It follows that $\mathfrak{J}$ is integrable. Likewise, the reader may verify that
\begin{equation}\label{struceqpsi}
\d \varphi=\frac{1}{2}\zeta_3\wedge\ov{\zeta_1}-\frac{1}{4}\zeta_2\wedge\ov{\zeta_2}-\zeta_1\wedge\ov{\zeta_3},
\end{equation}
simply by plugging in the definitions of the involved forms and by using the structure equations~\eqref{realstruceqproj}. 

Now consider the map 
$$
\tilde{\psi} : \projb \to \projb \times \C_2\setminus\{0\}, \quad u \mapsto \left(u,\begin{pmatrix} 0 & 1\end{pmatrix}\right)  
$$
and let $q : \projb \times \C_2\setminus\{0\} \to \mathbb{P}(T^*_{\C}Z^{1,0})$ denote the natural quotient projection induced by (the projectivisation of) $\rho$. Then $q \circ \tilde{\psi} : \projb \to \mathbb{P}(T^*_{\C}Z^{1,0})$ is a submersion onto $Y$ whose fibres are the $\mathrm{CO}(2)$-orbits. Indeed, let $(u,w)$ be a representative of an element $[\nu]\in\mathbb{P}(T^*_{\C}Z^{1,0})$ which lies in the complement of $\zeta_1(Z)$. Then using~\eqref{firstrep} it follows that we might transform with the affine part of the right action of $\R_2\rtimes \mathrm{CO}(2)$ to ensure that $w$ is of the form $(0\; w_2)$ for some non-zero complex number $w_2$. It follows that the element $u \in \projb$ is mapped onto $[\nu]$ showing that $q \circ \tilde{\psi}$ is surjective onto $Y$. Clearly $q \circ \tilde{\psi}$ is smooth and a submersion. Furthermore, suppose the two points $u,u^{\prime} \in \projb$ are mapped to the same element of $Y$. Then, there exists an element $z\rtimes \rot \in \R_2\rtimes \mathrm{CO}(2)$ and a non-zero complex number $s$ so that
$$
\rho\left((z\rtimes \rot)^{-1}\right)\begin{pmatrix}0 & 1\end{pmatrix}=\begin{pmatrix}-zr^2\e^{-2\i\phi} & \e^{-2\i\phi}\end{pmatrix}=\begin{pmatrix} 0 & s\end{pmatrix} 
$$
which holds true if and only if $z=0$. Consequently, there exists a unique diffeomorphism $\psi : \projb/\mathrm{CO}(2) \to Y$ making the following diagram commute:
$$
\begin{xy}
(0,0)*+{\projb}="\projb";
(30,0)*+{\projb\times \C_2\setminus\{0\}}="up";
(0,-15)*+{\projb/\mathrm{CO}(2)}="down";
(30,-15)*+{Y}="Y";
{\ar "\projb";"up"}?*!/_3mm/{\tilde{\psi}};
{\ar "\projb";"down"}?*!/^3mm/{\tau};
{\ar "down";"Y"}?*!/_3mm/{\psi};
{\ar "up";"Y"}?*!/_3mm/{q}
\end{xy}
$$
The complex structure on $Y \subset \mathbb{P}(T^*_{\C}Z^{1,0})$ is such that its $(1,\! 0)$-forms pull back to $\projb\times \C_2\setminus\{0\}$ to become linear combinations of the complex-valued $1$-forms $\zeta_1,\zeta_2,\d w_1, \d w_2$, where $w=(w_1 \; w_2) : \projb\times \mathbb{C}_2 \to \C_2$ denotes the projection onto the linear factor. Clearly, these forms pull back under $\tilde{\psi}$ to become linear combinations of the forms $\zeta_1,\zeta_2,\zeta_3$, hence $\psi$ is a biholomorphism.

Finally, note that the complex version of the Liouville $1$-form on $T^*_{\C}Z^{1,0}$ -- whose kernel defines the canonical contact structure on $\mathbb{P}(T^{*}_{\C}Z^{1,0})$ -- pulls back to $\projb\times \mathbb{C}_2$ to become $w_1\zeta_1+w_2\zeta_2$. Since
$$
\tilde{\psi}^*\left(w_1\zeta_1+w_2\zeta_2\right)=\zeta_2,
$$
the claim follows. 
\end{proof}
\begin{rmk}
Whereas the definition of the forms $\zeta_i$ is a natural consequence of the Lie algebra structure of $\mathrm{CO}(2)\subset \R_2\rtimes \mathrm{GL}^+(2,\R)$, the definition of the form $\varphi$ in~\eqref{eq:defcurvandcon} is somewhat mysterious at this point. The choice will be clarified during the proof of Proposition~\ref{ppn:rerho=alpha} below.  
\end{rmk}
%\begin{rmk}
%Alternatively, it follows from~\eqref{righttranskae} that the equations $\zeta_2=0$ define a subbundle $C$ of $T_{\C}(\projb/\mathrm{CO}(2))^{1,0}\simeq T_{\C}Y^{1,0}$ and furthermore~\eqref{righttranskaeinf} yields 
%$$
%\d\zeta_2\wedge\zeta_2=\zeta_1\wedge\zeta_2\wedge\zeta_3\neq 0,
%$$
%confirming that $C$ is a holomorphic contact structure.  
%\end{rmk}
We will henceforth identify $Y\simeq \projb/\mathrm{CO}(2)$ and think of $\tau$ as the projection map onto $Y$. Denoting the integrable almost complex structure on $Y$ by $J$, the first part of the following proposition is therefore clear:
\begin{ppn}
There exists a unique indefinite K\"ahler structure on $(Y,J)$ whose K\"ahler-form $\Omega_Y$ satisfies
\begin{align*}
\tau^*\Omega_Y&=-\frac{\i}{4}\left(\zeta_1\wedge \ov{\zeta_3}+\zeta_3\wedge\ov{\zeta_1}+\zeta_2\wedge\ov{\zeta_2}\right).
\end{align*}
Moreover, the indefinite K\"ahler metric $\Ym(\cdot,\cdot):=\Omega_Y(J\cdot,\cdot)$ is Einstein with non-zero scalar curvature. 
\end{ppn}
\begin{proof}
The first part of the statement is an immediate consequence of the fact that $\gamma$ takes values in $\mathfrak{u}(2,1)$. The skeptical reader might also verify this using the structure equations~\eqref{righttranskaeinf}. Furthermore, by definition, the associated K\"ahler metric satisfies
$$
\tau^*h=\frac{1}{2}\left(\zeta_1\circ \ov{\zeta_3}+\zeta_3\circ\ov{\zeta_1}+\zeta_2\circ\ov{\zeta_2}\right)\\
$$   
and hence the forms $\frac{1}{\sqrt{2}}\zeta_i$ are a unitary coframe for $\tau^*\Ym$. In order to verify the Einstein condition it is therefore sufficient that the trace of the curvature form
$$
\Gamma=\d \gamma+\gamma\wedge\gamma
$$
is a non-zero constant (imaginary) multiple of $\tau^*\Omega_Y$. We compute
\begin{align*}
0&=\d^2\zeta_3\wedge\zeta_1\wedge\ov{\zeta_1}=-\d\left(\gamma_{31}\wedge\zeta_1+\gamma_{32}\wedge\zeta_2+\gamma_{33}\wedge\zeta_3\right)\wedge\zeta_1\wedge\ov{\zeta_1}\\
&=\zeta_1\wedge\ov{\zeta_1}\wedge\left(\d L+\frac{1}{2}\ov{L}\zeta_2-L\varphi-2L\ov{\varphi}\right),
\end{align*} 
where we have used~\eqref{righttranskaeinf} and~\eqref{struceqpsi}. It follows that there exist unique complex-valued functions $L^{\prime}$ and $L^{\prime\prime}$ on $P$ such that
\begin{equation}\label{eq:struceqliouv}
\d L=L^{\prime}\zeta_1+L^{\prime\prime}\ov{\zeta_1}-\frac{1}{2}\ov{L}\zeta_2+L\varphi+2L\ov{\varphi}. 
\end{equation}
Using the structure equations~\eqref{righttranskaeinf},\eqref{struceqpsi} and~\eqref{eq:struceqliouv} we compute
$$
\Gamma=\frac{1}{4}\begin{pmatrix} \Gamma_{11} & -\zeta_1\wedge\ov{\zeta_2} & \zeta_1\wedge\ov{\zeta_1}\\ -\zeta_2\wedge\ov{\zeta_3} & \Gamma_{22} & \ov{\zeta_1}\wedge\zeta_2\\ \zeta_3\wedge\ov{\zeta_3}+* & \ov{\zeta_2}\wedge\zeta_3& \Gamma_{33}\end{pmatrix},
$$
with
\begin{align*}
\Gamma_{11}&=\frac{1}{4}\left(\zeta_3\wedge\ov{\zeta_1}-\zeta_2\wedge\ov{\zeta_2}-4\zeta_1\wedge\ov{\zeta_3}\right),\\
\Gamma_{22}&=\frac{1}{4}\left(-\zeta_1\wedge\ov{\zeta_3}-2\zeta_2\wedge\ov{\zeta_2}-\zeta_3\wedge\ov{\zeta_1}\right),\\
\Gamma_{33}&=\frac{1}{4}\left(\zeta_1\wedge\ov{\zeta_3}-\zeta_2\wedge\ov{\zeta_2}-4\zeta_3\wedge\ov{\zeta_1}\right).
\end{align*}
and where 
$
*=4\left(L^{\prime}+\ov{L^{\prime}}\right)\zeta_1\wedge\ov{\zeta_1}
$. In particular, we obtain
$$
\Gamma_{11}+\Gamma_{22}+\Gamma_{33}=4\i \tau^*\Omega_Y, 
$$
thus verifying the Einstein property. 
\end{proof}
\begin{rmk}
In~\cite{MR3833818}, it is shown how to canonically associate a split-signature anti-self-dual Einstein metric on the total space of a certain rank two affine bundle $A$ fibering over a projective surface $(\Sigma,\mathfrak{p})$. The indefinite K\"ahler--Einstein manifold $(Y,J,\Omega_Y$) constructed here may be interpreted as the twistor space of this anti-self-dual Einstein metric. 
\end{rmk}
\subsection{The canonical flat case}
In this subsection we identify the spaces 
$$
Y=\projb/\mathrm{CO}(2) \quad\text{and} \quad Z=\projb/\left(\R_2\rtimes \mathrm{CO}(2)\right)
$$
in the case where $(\Sigma,\mathfrak{p})$ is the canonical flat projective structure on the projective $2$-sphere. Recall that in this case $\projb=\mathrm{SL}(3,\R)$. The group $\mathrm{SL}(3,\R)$ also acts naturally on $\C_3$ by complexification, that is, by the rule
$$
g \cdot (\xi+\i \chi)=\xi g^{-1}+\i \chi g^{-1}
$$
for all $g \in \mathrm{SL}(3,\R)$. Clearly, this action descends to define a left action on $\mathbb{CP}_2$. However, this action is not transitive, but has two orbits. The first orbit is $\mathbb{RP}_2\subset \mathbb{CP}_2$, where we think of $\mathbb{RP}_2$ as those points $[\xi+\i \chi] \in \mathbb{CP}_2$ which satisfy $\xi\wedge\chi=0$, that is, $\xi$ and $\chi$ are linearly dependent over $\R$. Assume therefore $[\eps]$ is an element in the complement $\mathbb{CP}_2\setminus\mathbb{RP}_2$ of $\mathbb{RP}^2$ in $\mathbb{CP}^2$. Since $\mathrm{SL}(3,\R)$ acts transitively on unimodular triples of vectors in $\R_3$, we can assume without losing generality that $\eps=(0\;-\i \; 1)$. For $g\in\mathrm{SL}(3,\R)$ we write $g=(g_0\;g_1\;g_2)$ with $g_i \in \R^3$. We will next determine the stabiliser subgroup of $[\eps]$. A simple computation gives
$$
g\cdot \eps=g_0\wedge\left(g_1+\i g_2\right). 
$$
An elementary calculation shows that $[g\cdot \eps]=[\eps]$ implies that we must have $$
\begin{pmatrix} c_1 \\ c_2\end{pmatrix}=\begin{pmatrix}g^2_1 & -g^1_1 \\ g^2_2 & -g^1_2\end{pmatrix}\begin{pmatrix}g^1_0 \\ g^2_0 \end{pmatrix}=\begin{pmatrix} 0 \\ 0 \end{pmatrix}.
$$
Since
$$
\det g=g^0_2\,c_1-g^0_1\,c_2+g^0_0\,\det\begin{pmatrix}g^2_1 & -g^1_1 \\ g^2_2 & -g^1_2\end{pmatrix}=1,
$$
it follows that $g^1_0=g^2_0=0$. Therefore, the stabiliser subgroup of $[\eps]$ is a subgroup of $\R_2\rtimes \mathrm{GL}(2,\R)$. Writing $a=(a^i_j)$, we obtain
$$
\left(b\rtimes a\right) \cdot \eps=\det a^{-1}\begin{pmatrix}0 & -a^2_1-\i a^2_2 & a^1_1+\i a^1_2 \end{pmatrix},
$$
from which it follows that $[(b\rtimes a)\cdot \eps]=[\eps]$ if and only if $a^1_1=a^2_2$ and $a^1_2+a^2_1=0$, that is, $a \in \mathrm{CO}(2)$. Concluding, we have shown
$$
\mathrm{SL}(3,\R)/\left(\R_2\rtimes\mathrm{CO}(2)\right)\simeq \mathbb{CP}_2\setminus \mathbb{RP}_2
$$
and the projection map is 
$$
\mu:\mathrm{SL}(3,\R) \to \mathbb{CP}_2\setminus \mathbb{RP}_2, \quad \begin{pmatrix} g_0 & g_1 & g_2 \end{pmatrix} \mapsto [g_0\wedge (g_1+\i g_2)],
$$
where we use $\R_3\simeq \Lambda^2(\R^3)$. 
\begin{rmk}
We have only shown that $Z=\mathrm{SL}(3,\R)/\left(\R_2\times \mathrm{CO}(2)\right)$ is diffeomorphic to $\mathbb{CP}_2\setminus\mathbb{RP}_2$. Since $Z$ carries an integrable almost complex structure $J$, we may ask if $(Z,J)$ is biholomorphic to $\mathbb{CP}_2\setminus\mathbb{RP}_2$ equipped with the standard complex structure. This is indeed the case, see~\cite[Prop.~3]{MR3144212}. As a consequence of this result one can prove that the conformal connections on the $2$-sphere whose (unparametrised) geodesics are the great circles are in one-to-one correspondence with the smooth quadrics in $\mathbb{CP}_2\setminus\mathbb{RP}_2$, see~\cite[Cor.~2]{MR3144212}.
\end{rmk}
\begin{rmk}
In fact~\cite{MR1979367}, if $\mathfrak{p}$ is a projective structure on the $2$-sphere, all of whose geodesics are simple closed curves, then $Z$ can be compactified and the compactification is biholomorphic to $\mathbb{CP}_2$. This allowed Lebrun and Mason to prove that there is a nontrivial moduli space of such projective structures on the $2$-sphere.    
\end{rmk}
We will show next that $Y$ is a submanifold of $F(\C_3)$. Clearly, the action of $\mathrm{SL}(3,\R)$ on the space $F(\C_3)$ of complete complex flags is not transitive, there is however an open orbit. Let $F(\C_3)^*$ denote the $\mathrm{SL}(3,\R)$ orbit of the flag
$$
(\ell,\Pi)=\left(\mathbb{C}\{\eps_1\},\mathbb{C}\{\eps_1,\eps_2\}\right),
$$
where
$$
\eps_1=\begin{pmatrix} 0&-\i&1\end{pmatrix}, \quad \eps_2=\begin{pmatrix} 1&0&0\end{pmatrix}.\quad %\eps_3=\begin{pmatrix} 0&1&0\end{pmatrix}.
$$
We already know that the stabiliser subgroup $\mathrm{G}_0$ of $(\ell,\Pi)$ must be a subgroup of $\R_2\rtimes \mathrm{CO}(2)$. For $b\rtimes a \in \R_2\rtimes \mathrm{CO}(2)$ we write
$$
b\rtimes a=\begin{pmatrix} \frac{1}{x^2+y^2} & b_1 & b_2 \\ 0 & x & y \\  0 & -y & x\end{pmatrix},
$$
with $x^2+y^2>0$. We compute
$$
\eps_2 \cdot (b\rtimes a)=\begin{pmatrix} x^2+y^2 & -xb_1-yb_2 & -xb_2+yb_1\end{pmatrix}
$$ 
which is easily seen to lie in the complex linear span of $\eps_1,\eps_2$ if and only if $b_1=b_2=0$, hence
$$
\mathrm{SL}(3,\R)/\mathrm{CO}(2)\simeq F(\C_3)^*
$$
and the projection map is
$$
\tau : \mathrm{SL}(3,\R) \to F(\C_3) \quad \begin{pmatrix} g_0 & g_1 & g_2 \end{pmatrix} \mapsto \left(\C\{\eps_1\},\C\{\eps_1,\eps_2\}\right),
$$
with 
$$
\eps_1=g_0\wedge (g_1+\i g_2), \qquad \eps_2=g_1\wedge g_2.
$$
Since $F(\C_3)$ is real six-dimensional and since $\dim \mathrm{SL}(3,\R)-\dim \mathrm{CO}(2)=6$, it follows that $F(\C_3)^*$ is open. \section{The variational equations}

By construction, a conformal structure $[g]$ on the oriented projective surface $(\Sigma,\mathfrak{p})$ is a section of $Z \to \Sigma$. Here we will show that every conformal structure $[g]$ admits a natural lift $\widetilde{[g]} : \Sigma \to Y$. In doing so we recover the functional $\mathcal{E}_{\mathfrak{p}}$ from a different viewpoint, which simplifies the computation of its variational equations. We start with recalling the bundle of complex linear coframes of a Riemann surface. 

\subsection{The bundle of complex linear coframes}\label{subsec:confcon}

Let $\Sigma$ be an oriented surface equipped with a conformal structure $[g]$, so that $\Sigma$ inherits the structure of a Riemann surface whose integrable almost complex structure will be denoted by $J$. The bundle of complex-linear coframes of $(\Sigma,[g])$ is the $\mathrm{GL}(1,\C)$-subbundle $F^+_{[g]}$ of $F^+$ consisting of those coframes that are complex-linear with respect to $J$ and the complex structure obtained on $\R^2$ via the standard identification $\R^2\simeq \C$. Of course, via the isomorphism $\mathrm{CO}(2)\simeq \mathrm{GL}(1,\C)$, we may equivalently think of $F^+_{[g]}$ as consisting of those coframes in $F^{+}$ that are angle preserving with respect to $[g]$ and the standard conformal inner product on $\R^2$. 

Recall that a principal $\mathrm{CO}(2)$-connection $\varphi$ on $F^+_{[g]}$ is called~\textit{torsion-free} if it satisfies
$$
\d \omega=-\varphi\wedge\omega,
$$
where here we think of the tautological $\R^2$-valued $1$-form $\omega$ on $F^+_{[g]}$ as taking values in $\C$ and the connection taking values in the Lie algebra of $\mathrm{CO}(2)\simeq\mathrm{GL}(1,\C)$, that is, $\C$. The curvature $\Phi$ of $\varphi$ is a $(1,\! 1)$-form on $\Sigma$ whose pullback to $F^{+}_{[g]}$ can be written as 
$$
\d \varphi=R\,\omega\wedge\ov{\omega}
$$
for some unique complex-valued function $R$ on $F^+_{[g]}$. By definition of $F^{+}_{[g]}$, a complex-valued $1$-form on $\Sigma$ is a $(1,\! 0)$-form with respect to $J$ if and only if its pullback to $F^{+}_{[g]}$ is a complex multiple of $\omega$. A consequence of this is the following elementary lemma whose proof we omit: 
\begin{lem}\label{lem:struceqcplxconriemsurf}
A complex-valued function $f$ on $F^{+}_{[g]}$ represents a section of $K_{\Sigma}^{m}\otimes \ov{K_{\Sigma}^{n}}$ if and only if there exist complex-valued functions $f^{\prime}$ and $f^{\prime\prime}$ on $F^+_{[g]}$ so that
$$
\d f=f^{\prime}\omega+f^{\prime\prime}\ov{\omega}+fm\varphi+fn\ov{\varphi}. 
$$
\end{lem}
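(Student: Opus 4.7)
The plan is to prove the lemma by identifying the bundle $K_\Sigma^m \otimes \overline{K_\Sigma^n}$ as associated to $F^+_{[g]}$ via an explicit character of $\mathrm{CO}(2) \simeq \mathrm{GL}(1,\C)$, translating equivariance into a condition on $\d f$, and using the defining properties of $\omega$ and $\varphi$ to read off the claimed normal form.

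First, I would fix conventions. A point $u \in F^+_{[g]}$ is a complex-linear isomorphism $u : T_p\Sigma \to \C$, and the right action is $R_\lambda(u) = \lambda^{-1} \circ u$ for $\lambda \in \mathbb{C}^*$, so that the tautological form satisfies $R_\lambda^*\omega = \lambda^{-1}\omega$. Hence a $(1,0)$-form on $\Sigma$, pulled back to $F^+_{[g]}$ and written $f\omega$, is basic precisely when $R_\lambda^*f = \lambda f$. Inductively, sections of $K_\Sigma^m \otimes \overline{K_\Sigma^n}$ are represented by those smooth functions $f : F^+_{[g]} \to \C$ satisfying the equivariance
\[
R_\lambda^* f = \lambda^m \bar\lambda^n f, \qquad \lambda \in \C^*.
\]

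Next, I would differentiate this equivariance. For $v \in \mathbb{C}$ identified with the Lie algebra of $\mathrm{CO}(2) \simeq \mathrm{GL}(1,\C)$, let $X_v$ denote the corresponding fundamental vector field on $F^+_{[g]}$. The differentiated form of the above equivariance is
\[
X_v(f) = (m v + n \bar v)\, f.
\]
Since $\omega$ is semibasic we have $\omega(X_v) = 0$, while the connection axiom gives $\varphi(X_v) = v$ and therefore $\bar\varphi(X_v) = \bar v$. On the other hand, because the forms $\omega, \bar\omega, \varphi, \bar\varphi$ span $T^*F^+_{[g]}$ at every point (four real dimensions, matching $\dim F^+_{[g]}$), every $1$-form on $F^+_{[g]}$ admits a unique expansion in this coframe. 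In particular, write
\[
\d f = f'\omega + f''\bar\omega + g\,\varphi + h\,\bar\varphi
\]
for unique complex-valued functions $f', f'', g, h$. Evaluating both sides on $X_v$ and using the identities above yields $g v + h \bar v = (mv + n\bar v) f$ for every $v \in \C$, which forces $g = mf$ and $h = nf$. This establishes the forward implication.

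For the converse, suppose $\d f = f'\omega + f''\bar\omega + mf\,\varphi + nf\,\bar\varphi$ for some $f', f''$. Contracting with $X_v$ recovers the infinitesimal equivariance $X_v(f) = (m v + n\bar v) f$. Since $\mathrm{CO}(2) \simeq \mathbb{R}^+\times \mathrm{SO}(2)$ is connected, this infinitesimal equivariance integrates to the global equivariance $R_\lambda^* f = \lambda^m \bar\lambda^n f$, so that $f$ represents a section of $K_\Sigma^m \otimes \overline{K_\Sigma^n}$. There is no serious obstacle — the argument is a standard translation between equivariant functions on a principal bundle and sections of an associated line bundle, with the connection supplying the vertical direction in the coframe expansion.
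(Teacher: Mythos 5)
Your proof is correct and, since the paper explicitly omits the proof of this lemma as elementary, it supplies exactly the standard argument that is being tacitly invoked: identify sections of $K_{\Sigma}^m\otimes\overline{K_{\Sigma}^n}$ with functions satisfying $R_{\lambda}^*f=\lambda^m\overline{\lambda}^n f$, note that $(\omega,\overline{\omega},\varphi,\overline{\varphi})$ is a coframe on the four-dimensional manifold $F^+_{[g]}$, evaluate $\d f$ on fundamental vector fields to pin down the vertical coefficients as $mf$ and $nf$, and integrate the infinitesimal equivariance over the connected group $\mathrm{CO}(2)$ for the converse. Your weight conventions also agree with the paper's subsequent use of the lemma (for instance $m=2$, $n=-1$ reproduces the terms $2a\varphi-a\overline{\varphi}$ in the structure equation for $a$), so there is nothing to correct.
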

\begin{rmk}
Here $K_{\Sigma}=T^*_{\C}\Sigma^{1,0}$ denotes the canonical bundle of $(\Sigma,J)$, $K_{\Sigma}^m$ its $m$-th tensorial power and $\ov{K^n_{\Sigma}}$ the  conjugate bundle of the $n$-th tensorial power of $K_{\Sigma}$.  
As usual, we we let $\nabla_\varphi$ denote the connection induced by $\varphi$ on $K_{\Sigma}^{m}\otimes \ov{K_{\Sigma}^{n}}$ and by $\nabla^{\prime}_\varphi$ its $(1,\! 0)$-part and by $\nabla^{\prime\prime}_\varphi$ its $(0,\! 1)$-part. Of course, if $s$ is the section of $K_{\Sigma}^{m}\otimes \ov{K_{\Sigma}^{n}}$ represented by $f$, then $\nabla^{\prime}_\varphi s$ is represented by $f^{\prime}$ and $\nabla^{\prime\prime}_\varphi s$ is represented by $f^{\prime\prime}$.  
\end{rmk}
Lemma~\ref{lem:struceqcplxconriemsurf} implies that $\varphi$ may also be thought of as the connection form of the connection induced by $\varphi$ on $K_{\Sigma}^{*}$. Therefore, the first Chern class of $K_{\Sigma}^{*}$ is
$$
c_1(K_{\Sigma}^{*})=\left[\frac{\i}{2\pi}\Phi\right]
$$
and hence if $\Sigma$ is compact, we obtain
\begin{equation}\label{eq:gaussbonnet}
\int_{\Sigma}\i\, \Phi=2\pi\chi(\Sigma),
\end{equation}
where $\chi(\Sigma)$ denotes the Euler-characteristic of $\Sigma$. 

\subsection{Submanifold theory in the twistor space}
We are interested in co-di\-men\-sion two submanifolds of $Z$ arising as images of sections of $Z \to \Sigma$. The second order theory of such submanifolds is summarised in the following:

\begin{lem}\label{lem:lift}
Let $[g] : \Sigma \to Z$ be a conformal structure on $(\Sigma,\mathfrak{p})$. Then there exists a lift $\widetilde{[g]} : \Sigma \to Y$ covering $[g]$ so that the pullback-bundle $p : \projb_{[g]}^{\prime}=\widetilde{[g]}^*\projb \to \Sigma$ is isomorphic to the $\mathrm{CO}(2)$-bundle of complex linear coframes $F^+_{[g]}$ of $(\Sigma,[g])$ and so that on $\projb^{\prime}_{[g]}\simeq F^+_{[g]}$ we have
$$
\zeta_2=2\ov{a}\,\ov{\zeta_1}, \quad \zeta_3=k\zeta_1+2\ov{q}\ov{\zeta_1},
$$
for unique complex-valued functions $a,k,q$ on $\projb^{\prime}_{[g]}$. 
\end{lem}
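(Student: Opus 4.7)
The plan is to construct $\widetilde{[g]}$ explicitly from the canonical conformal connection provided by Corollary~\ref{uniquepoints}, and then to read off the stated identities by pulling back Cartan's connection form $\theta$ along the resulting embedding. Specifically, I would invoke Corollary~\ref{uniquepoints} to obtain the unique $[g]$-conformal connection $\con$ together with the representative $\nabla := \con + A_{[g]}\in\mathfrak{p}$, and take $\nabla$ as the reference connection in the construction of Section~\ref{sec:cartbund}, so that $\projb = F^+\times\R_2$ with the $\mathrm{G}$-action~\eqref{rightactioncartbundleisom}. Restricted to the subgroup $\{0\}\rtimes\mathrm{CO}(2)\subset \mathrm{G}$, this action becomes $(u,0)\cdot(0\rtimes a) = (u\cdot a,0)$, so the zero section $F^+\times\{0\}$ is $\mathrm{CO}(2)$-invariant, and the composition
$$
F^+_{[g]} \hookrightarrow F^+\times\{0\} \subset \projb \twoheadrightarrow Y
$$
descends to a smooth map $\widetilde{[g]} : \Sigma \to Y$ that covers $[g]$. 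The tautological assignment $u \mapsto (\pi(u),(u,0))$ then furnishes the asserted $\mathrm{CO}(2)$-equivariant isomorphism $F^+_{[g]} \simeq \projb^{\prime}_{[g]}$ over $\Sigma$.

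With this identification in hand, I would next compute the pullbacks of $\zeta_1,\zeta_2,\zeta_3$ through $u\mapsto (u,0)$. Since both $\xi$ and $\d\xi$ restrict to zero on $F^+\times\{0\}$, formula~\eqref{cartconpref} gives $\theta^i_0 = \omega^i$, $\theta^i_j = \eta^i_j - \tfrac{1}{3}\delta^i_j\tr\eta$ and $\theta^0_j = -S_{jk}\omega^k$ on $F^+_{[g]}$, where $\eta$ is the connection form of $\nabla$ and $S$ its projective Schouten tensor. Writing $\eta = \eta_0 + \alpha$ with $\eta_0$ the $\mathfrak{co}(2)$-valued connection form of $\con$ on $F^+_{[g]}$ and $\alpha$ the semibasic tensorial $1$-form representing $A_{[g]}$, the identities $(\eta_0)^1_1 = (\eta_0)^2_2$ and $(\eta_0)^1_2 = -(\eta_0)^2_1$ combined with properties (i) and (vi) of Theorem~\ref{quasilinop1} (which force $\alpha^1_1 = -\alpha^2_2$ and $\alpha^1_2 = \alpha^2_1$) at once yield $\zeta_1 = \omega$ and $\zeta_2 = 2(\alpha^1_1 + i\alpha^1_2)$. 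Likewise $\zeta_3 = -(S_{1k}+iS_{2k})\omega^k$ is a complex linear combination of the semibasic forms $\omega^1,\omega^2$, hence uniquely expressible as $k\zeta_1 + 2\bar q\,\bar\zeta_1$ in the basis $(\omega,\bar\omega)$ of semibasic $(1,0)$- and $(0,1)$-forms.

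The hard part will be to verify that $\zeta_2$ has vanishing $(1,0)$-component. Working locally where $g = (\d x^1)^2+(\d x^2)^2$ and $\omega = \d x^1+i\,\d x^2$, expansion of $\alpha^1_j = A^1_{kj}\,\d x^k$ in the basis $(\omega,\bar\omega)$ shows that the coefficient of $\omega$ in $\zeta_2 = 2(\alpha^1_1 + i\alpha^1_2)$ equals $(A^1_{11} + A^1_{22}) + i\,(A^1_{12} - A^1_{21})$. Property (ii) of Theorem~\ref{quasilinop1} gives $A^1_{12} = A^1_{21}$, while property (vi) applied in the form $A^1_{22} = A^2_{21}$ combined with the trace-free property (i) (so that $A^2_{21} = -A^1_{11}$) gives $A^1_{11} + A^1_{22} = 0$. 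It is precisely the simultaneous use of all three properties of $A_{[g]}$ that forces the $(1,0)$-coefficient to vanish, producing $\zeta_2 = 2\bar a\,\bar\zeta_1$ for a unique complex-valued function $a$; uniqueness of $a,k,q$ then follows from the linear independence of $\omega$ and $\bar\omega$ at each point of $F^+_{[g]}$.
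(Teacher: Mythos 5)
Your proposal is correct, but it takes a genuinely different route from the paper's. The paper argues structurally: pulling back to the six\mbox{-}dimensional bundle $\projb_{[g]}=[g]^*\projb$, a dimension count together with the non-vanishing of $\zeta_1\wedge\ov{\zeta_1}$ forces $\zeta_2=2\ov{a}\,\ov{\zeta_1}+c\,\zeta_1$; the equivariance $c(u\cdot z\rtimes\rot)=r^3\e^{\i\phi}c(u)+r^2z$ shows that the $\R_2$-part of the structure group acts simply transitively on the values of $c$ along each fibre, so the locus $c=0$ is a $\mathrm{CO}(2)$-reduction defining $\widetilde{[g]}$, and a second dimension count yields the relation for $\zeta_3$ --- no use is made of Theorem~\ref{quasilinop1} or Corollary~\ref{uniquepoints}, so the lemma holds for any section of $Z\to\Sigma$ purely from the Cartan geometry. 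You instead build the lift explicitly: you take the model $\projb\simeq F^+\times\R_2$ attached to the canonical representative $\nabla=\con+A_{[g]}$ from Corollary~\ref{uniquepoints}, observe that the zero section is $\mathrm{CO}(2)$-invariant, and check by direct computation with properties (i), (ii), (vi) of Theorem~\ref{quasilinop1} that the $(1,\! 0)$-part of $\zeta_2$ vanishes there. In effect you prove that the paper's abstractly defined locus $c=0$ coincides with the slice $\xi=0$ in this model, which is precisely the computation the paper defers to the proof of Proposition~\ref{ppn:rerho=alpha}, where $c=\xi_1+\i\xi_2$ and $a=A^1_{11}+\i A^2_{22}$ are derived; your argument thus merges Lemma~\ref{lem:lift} with that later step and yields the explicit formula for $a$, together with $\zeta_3$ expressed through the Schouten tensor, for free --- at the cost of invoking the compatibility machinery of Section 2, and of losing the equivariance formulas such as~\eqref{righttrans2}, which the paper's proof produces en route and which are needed again in Proposition~\ref{ppn:liftsplit}. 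Two small points to tighten: the restricted action is $(u,0)\cdot(0\rtimes a)=\left(u\cdot(a\det a),0\right)$, i.e.\ the coframe action composed with the group automorphism $a\mapsto a\det a$ of $\mathrm{CO}(2)$ (harmless, since this automorphism is bijective, so the zero section remains invariant, each fibre of $F^+_{[g]}\times\{0\}$ is a single $\mathrm{CO}(2)$-orbit, and your identification with $F^+_{[g]}$ stands, as a principal bundle isomorphism along that automorphism); and the normalisation $g=(\d x^1)^2+(\d x^2)^2$ must be justified by isothermal coordinates combined with the conformal invariance of $A_{[g]}$ (Theorem~\ref{quasilinop1} (iv)), with the pointwise vanishing then propagating from the coordinate section to all of $F^+_{[g]}$ by $\mathrm{CO}(2)$-equivariance --- the same device the paper employs in proving part (vi).
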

%\begin{rmk}
%Here $K_{\Sigma}$ denotes the canonical bundle of $\Sigma$ with respect to the complex structure on $\Sigma$ induced by $[g]$ and the orientation.
%\end{rmk}
\begin{proof}
First recall that in Lemma~\ref{cplx3fold} we have defined
$$
\zeta_1=\theta^1_0+\i\theta^2_0,\quad \zeta_2=\left(\theta^1_1-\theta^2_2\right)+\i\left(\theta^1_2+\theta^2_1\right),\quad \zeta_3=\theta^0_1+\i\theta^0_2,
$$
where $\theta=(\theta^i_j)$ is the Cartan connection of $(\Sigma,\mathfrak{p})$. 

Let now $[g] : \Sigma \to Z$ be a conformal structure on $(\Sigma,\mathfrak{p})$ and let $p : \projb_{[g]}=[g]^*\projb\to\Sigma$ denote the pullback of the bundle $\mu : \projb\to Z$, that is, 
$$
\projb_{[g]}=\left\{(p,u) \in \Sigma\times \projb\,|\, [g](p)=\mu(u)\right\}.
$$ 
Since $\projb_{[g]}$ is $6$-dimensional, two of the components of $\theta$ become linearly dependent when pulled back to $\projb_{[g]}$. Clearly, these components must be among the $1$-forms that are semibasic for $\mu$. Recall that these forms are spanned by $\zeta_1,\zeta_2$ and their complex conjugates. However, since $[g]$ is a section of $Z \to \Sigma$ and since the $1$-forms that are semibasic for the projection $\pi : \projb \to \Sigma$ are spanned by $\zeta_1,\ov{\zeta_1}$, it follows that $\zeta_1\wedge\ov{\zeta_1}$ is non-vanishing on $\projb_{[g]}$. Therefore, on $\projb_{[g]}$ we have the relation
\begin{equation}\label{defa}
\zeta_2=2\ov{a}\ov{\zeta_1}+c\zeta_1 
\end{equation}
for unique complex-valued functions $a,c$. From the equivariance properties of $\zeta_1,\zeta_2$ under the $\R_2\rtimes \mathrm{CO}(2)$-right action~\eqref{rightacttwist}, we obtain that for all $u \in \projb_{[g]}$ and $z \rtimes \rot \in \R_2\rtimes \mathrm{CO}(2)$ we have
$$
c(u\cdot z\rtimes \rot)=r^3\e^{\i\phi}c(u)+ r^2 z
$$
and
\begin{equation}\label{righttrans2}
a(u\cdot z\rtimes \rot)=r^3\e^{-3\i\phi}a(u). 
\end{equation}
It follows that the equation $c=0$ defines a locus that corresponds to a section $\widetilde{[g]} : \Sigma \to Y$ covering $[g]$. On the pullback bundle $\projb^{\prime}_{[g]}=\widetilde{[g]}^*\projb$, where
$$
\projb^{\prime}_{[g]}=\left\{(p,u) \in \Sigma\times \projb\,|\, \widetilde{[g]}(p)=\tau(u)\right\},
$$
we obtain
\begin{equation}\label{someid}
\zeta_2=2\ov{a}\ov{\zeta_1}. 
\end{equation}   
Since $\projb^{\prime}_{[g]}$ is $4$-dimensional, two of the remaining components of $\theta$ become linearly dependent when pulled back to $\projb^{\prime}_{[g]}$. Since the $1$-forms that are semibasic for the projection $\tau : \projb \to Y$ are spanned by $\zeta_1,\zeta_2,\zeta_3$ and their complex conjugates, it follows as before that
\begin{equation}\label{someid2}
\zeta_3=k\zeta_1+2\ov{q}\,\ov{\zeta_1}
\end{equation}
for unique complex-valued functions $k,q$. %where we abbreviate
%\begin{equation}\label{someeq53}
%s=\left(\frac{1}{4}|a|^2+\frac{1}{2}c-\ov{c}\right).
%\end{equation}

Now recall that Cartan's bundle $\pi : \projb\to \Sigma$ is isomorphic to $F^+\times \R_2\to \Sigma$ equipped with the $\mathrm{G}$-right action~\eqref{rightactioncartbundleisom}. Therefore, $\projb_{[g]} \to \Sigma$ is isomorphic to $F^+_{[g]} \times \R_2 \to \Sigma$ and consequently, the bundle $\projb^{\prime}_{[g]} \to \Sigma$ is isomorphic to $F^+_{[g]} \to \Sigma$. 
\end{proof}

We also obtain:
\begin{lem}\label{lem:struceqexcon}
The functions $a,k,q$ and the $1$-form $\varphi$ satisfy the following structure equations on $P^{\prime}_{[g]}\simeq F^+_{[g]}$ 
\begin{align}
\label{eq:struceqforr}\d a&=a^{\prime}\zeta_1-q \ov{\zeta_1}+2a\varphi-a\ov{\varphi},\\
\d k&=k^{\prime}\zeta_1+k^{\prime\prime}\ov{\zeta_1}+k\varphi+k\ov{\varphi},\\
\d q&=q^{\prime}\zeta_1+\frac{1}{2}\left(\ov{L}+\ov{k^{\prime\prime}}-2\ov{q}a\right)\ov{\zeta_1}+2q \varphi,\\
\label{eq:struceqforA}\d \varphi&=\left(|a|^2+\frac{1}{2}k-\ov{k}\right)\zeta_1\wedge\ov{\zeta_1}
\end{align}
for unique complex-valued functions $r^{\prime},k^{\prime},k^{\prime\prime}$ and $q^{\prime}$ on $P^{\prime}_{[g]}$. 
\end{lem}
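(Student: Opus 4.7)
\medskip

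\noindent\textbf{Proof proposal.} The strategy is to restrict the structure equations \eqref{righttranskaeinf} and \eqref{struceqpsi} from $\projb$ to the four-dimensional subbundle $\projb^{\prime}_{[g]}$, where $\zeta_2$ and $\zeta_3$ are forced to obey the relations $\zeta_2 = 2\overline{a}\,\overline{\zeta_1}$ and $\zeta_3 = k\zeta_1 + 2\overline{q}\,\overline{\zeta_1}$ established in Lemma~\ref{lem:lift}, and then to extract the desired differential identities by equating the ``structure-equation'' expression for $d\zeta_j$ with the one obtained by direct differentiation of these defining relations. A preliminary observation is that on $\projb^{\prime}_{[g]}$ the first row of \eqref{righttranskaeinf} simplifies to $\d\zeta_1 = -\varphi\wedge\zeta_1$, since $\tfrac{1}{2}\overline{\zeta_1}\wedge\zeta_2 = \overline{a}\,\overline{\zeta_1}\wedge\overline{\zeta_1} = 0$; consequently $\d\overline{\zeta_1} = -\overline{\varphi}\wedge\overline{\zeta_1}$. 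Note also that on the four-manifold $\projb^{\prime}_{[g]}$, the forms $\zeta_1,\overline{\zeta_1},\varphi,\overline{\varphi}$ are linearly independent and thus span the cotangent space.

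The equation \eqref{eq:struceqforA} for $\d\varphi$ is the easiest: substitute the Lemma~\ref{lem:lift} relations directly into \eqref{struceqpsi}, observing that $\zeta_3\wedge\overline{\zeta_1} = k\,\zeta_1\wedge\overline{\zeta_1}$, $\zeta_1\wedge\overline{\zeta_3} = \overline{k}\,\zeta_1\wedge\overline{\zeta_1}$, and $\zeta_2\wedge\overline{\zeta_2} = -4|a|^2\,\zeta_1\wedge\overline{\zeta_1}$; collecting terms yields the claimed coefficient $|a|^2 + \tfrac{1}{2}k - \overline{k}$. For the equation for $a$, one differentiates $\zeta_2 = 2\overline{a}\,\overline{\zeta_1}$ and uses $\d\overline{\zeta_1} = -\overline{\varphi}\wedge\overline{\zeta_1}$ together with the structure-equation value of $\d\zeta_2$ obtained from \eqref{righttranskaeinf} after substituting $\zeta_2, \zeta_3$. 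Equating the two expressions and cancelling a factor of $2$ gives
\[
\bigl(\d\overline{a} + \overline{a}\varphi - 2\overline{a}\,\overline{\varphi} + \overline{q}\,\zeta_1\bigr)\wedge\overline{\zeta_1} = 0,
\]
so by Cartan's lemma the parenthesised expression is a multiple of $\overline{\zeta_1}$; conjugating this relation produces \eqref{eq:struceqforr}, with $a^{\prime}$ defined as the resulting coefficient.

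The joint equations for $k$ and $q$ are obtained analogously but require more care. Differentiating $\zeta_3 = k\zeta_1 + 2\overline{q}\,\overline{\zeta_1}$ and comparing with the value of $\d\zeta_3$ produced by the third row of \eqref{righttranskaeinf}, all evaluated on $\projb^{\prime}_{[g]}$, yields an identity of the shape
\[
\bigl(\d k - k\varphi - k\overline{\varphi}\bigr)\wedge\zeta_1 + 2\bigl(\d\overline{q} - 2\overline{q}\,\overline{\varphi}\bigr)\wedge\overline{\zeta_1} = (L - 2q\overline{a})\,\zeta_1\wedge\overline{\zeta_1}.
\]
Expanding $\d k$ and $\d\overline{q}$ in the coframe $\{\zeta_1,\overline{\zeta_1},\varphi,\overline{\varphi}\}$ and comparing coefficients of the independent two-forms $\varphi\wedge\zeta_1$, $\overline{\varphi}\wedge\zeta_1$, $\varphi\wedge\overline{\zeta_1}$, $\overline{\varphi}\wedge\overline{\zeta_1}$ and $\zeta_1\wedge\overline{\zeta_1}$ forces the $\varphi,\overline{\varphi}$ components of $\d k$ to equal $k\varphi + k\overline{\varphi}$, the $\overline{\varphi}$ component of $\d\overline{q}$ to equal $2\overline{q}\,\overline{\varphi}$ with no $\varphi$-component, and finally fixes the coefficient of $\overline{\zeta_1}$ in $\d\overline{q}$ as $\tfrac{1}{2}(L + k^{\prime\prime} - 2q\overline{a})$. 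Taking complex conjugates gives the claimed equations for $\d k$ and $\d q$.

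The main obstacle is purely bookkeeping: one has to be scrupulous about the sign conventions inherited from \eqref{righttranskaeinf} and about tracking which component of $\d\overline{q}$, after conjugation, produces the coefficient $\tfrac{1}{2}(\overline{L} + \overline{k^{\prime\prime}} - 2\overline{q}a)$ of $\overline{\zeta_1}$ in $\d q$. Uniqueness of $a^{\prime},k^{\prime},k^{\prime\prime},q^{\prime}$ is automatic, since $\{\zeta_1,\overline{\zeta_1},\varphi,\overline{\varphi}\}$ is a coframe on $\projb^{\prime}_{[g]}$.
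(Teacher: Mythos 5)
Your proposal is correct and takes essentially the same route as the paper: the paper likewise differentiates the relations $\zeta_2=2\overline{a}\,\overline{\zeta_1}$, $\zeta_3=k\zeta_1+2\overline{q}\,\overline{\zeta_1}$ and compares with the ambient structure equations \eqref{righttranskaeinf} and \eqref{struceqpsi}, though it writes out only the $a$-equation (obtaining the conjugate of your relation, $0=\left(\d a+q\overline{\zeta_1}-2a\varphi+a\overline{\varphi}\right)\wedge\zeta_1$) and declares the $k,q$ equations ``entirely analogous'' --- which your correctly displayed identity $\left(\d k-k\varphi-k\overline{\varphi}\right)\wedge\zeta_1+2\left(\d\overline{q}-2\overline{q}\,\overline{\varphi}\right)\wedge\overline{\zeta_1}=(L-2q\overline{a})\,\zeta_1\wedge\overline{\zeta_1}$ makes explicit. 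One trivial slip: the $\zeta_1\wedge\overline{\zeta_1}$-comparison fixes the coefficient of $\zeta_1$ (not of $\overline{\zeta_1}$) in $\d\overline{q}$ as $\tfrac{1}{2}(L+k^{\prime\prime}-2q\overline{a})$, which only after conjugation becomes the $\overline{\zeta_1}$-coefficient of $\d q$ claimed in the lemma.
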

\begin{proof}
We will only verify the structure equation for $a$ as the other structure equations are derived in an entirely analogous fashion. The structure equations~\eqref{righttranskaeinf} and~\eqref{someid} gives
\begin{align*}
\d\ov{\zeta_2}&=\d(2a\zeta_1)=2\d a\wedge\zeta_1+a\,\d \zeta_1=-\zeta_1\wedge \d a +2a\left(\zeta_1\wedge \varphi+\frac{1}{2}\ov{\zeta_1}\wedge\zeta_2\right)\\
&=-\ov{\zeta_1}\wedge\ov{\zeta_3}+\ov{\zeta_2}\wedge \ov{\varphi}-\ov{\zeta_2}\wedge \varphi\\
&=2q\zeta_1\wedge\ov{\zeta_1}+2a\zeta_1\wedge\ov{\varphi}-2a\zeta_1\wedge \varphi,
\end{align*}
where we have used~\eqref{someid2}. Equivalently, we obtain
$$
0=\left(\d a+q\ov{\zeta_1}-2a \varphi+a\ov{\varphi}\right)\wedge\zeta_1,
$$
which implies~\eqref{eq:struceqforr}. Finally, the structure equation~\eqref{eq:struceqforA} for $\varphi$ is an immediate consequence of~\eqref{righttranskaeinf},~\eqref{someid} and~\eqref{someid2}.
\end{proof}

As we will see next, the functions $a,q,k$ on $P^{\prime}_{[g]}$ satisfy certain equivariance properties with respect to the $\mathrm{CO}(2)$-right action on $P^{\prime}_{[g]}$ and hence represent sections of complex line bundles associated to $p : P^{\prime}_{[g]}\to \Sigma$.
\begin{ppn}\label{ppn:liftsplit}
The choice of a conformal structure $[g]$ on $(\Sigma,\mathfrak{p})$ determines the following objects:
\begin{itemize}
\item[(i)] A torsion-free connection $\varphi$ on the bundle of complex-linear coframes of $(\Sigma,[g])$; 
\item[(ii)] A section $\alpha$ of $K^2_{\Sigma}\otimes \ov{K_{\Sigma}^{*}}$ that is represented by $a$.
\item[(iii)] A quadratic differential $Q$ on $\Sigma$ that is represented by $q$;
\item[(iv)] A $(1,\! 1)$-form $\kappa$ on $\Sigma$ that is represented by $k$.
\end{itemize}
Moreover, the quadratic differential $Q$ satisfies
\begin{equation}\label{eq:quaddifeq}
Q=-\nabla^{\prime\prime}_\varphi\alpha. 
\end{equation}
\end{ppn}
\begin{proof}
By construction of $P^{\prime}_{[g]}\simeq F^+_{[g]}$, a complex-valued $1$-form on $\Sigma$ is a $(1,\! 0)$-form for the complex structure $J$ induced by $[g]$ and the orientation if and only if its $p$-pullback to $\projb^{\prime}_{[g]}$ is a complex multiple of $\zeta_1$. Since 
$$
\left(R_{\rot}\right)^*\zeta_1=\frac{1}{r^3}\e^{\i\phi}\zeta_1
$$
it follows that the sections of $K_{\Sigma}^2$ are in one-to-one correspondence with the com\-plex-valued functions $f$ on $\projb^{\prime}_{[g]}$ satisfying 
$$
\left(R_{\rot}\right)^*f=r^3\e^{-\i\phi}r^3\e^{-\i\phi}f=r^6\e^{-2\i\phi} f.
$$
Likewise, it follows that the sections of $K_{\Sigma}^2\otimes \ov{K_{\Sigma}^{*}}$ are in one-to-one correspondence with the complex-valued functions $f$ on $\projb^{\prime}_{[g]}$ satisfying 
$$
\left(R_{\rot}\right)^*f=r^3\e^{-\i\phi}r^3\e^{-\i\phi}\ov{r^{-3}\e^{\i\phi}} f=r^3\e^{-3\i\phi}f
$$ 
and that the sections of $K_{\Sigma}\otimes \ov{K_{\Sigma}}$ are in one-to-one correspondence with the complex valued functions $f$ on $\projb^{\prime}_{[g]}$ satisfying
$$
\left(R_{\rot}\right)^*f=r^3\e^{-\i\phi}r^3\e^{\i\phi}f=r^6f.
$$
From~\eqref{someid2} and~\eqref{righttranskae} we obtain that for all $u \in \projb^{\prime}_{[g]} $ and $\rot \in \mathrm{CO}(2)$ 
\begin{align*}
k(u\cdot \rot)&=r^6k(u),\\ 
q(u\cdot \rot)&=r^6\e^{-2\i\phi}q(u).
\end{align*}
These equations imply that there exists a unique quadratic differential $Q$ on $\Sigma$ that is represented by $q$ and a unique $(1,\! 1)$-form $\kappa$ on $\Sigma$ that is represented by $k$. Furthermore,~\eqref{righttrans2} implies that there exists a unique section $\alpha$ of $K_{\Sigma}^2\otimes \ov{K_{\Sigma}^{*}}$ that is represented by $a$. 

It follows from the properties (ii) and (iii) of the Cartan connection that $\varphi$ is a connection $1$-form on the $\mathrm{CO}(2)$-bundle $\projb^{\prime}_{[g]} \to \Sigma$. Its pushforward under the bundle isomorphism $\projb^{\prime}_{[g]} \to F^+_{[g]}$ is then a $\mathrm{CO}(2)$-connection on $F^+_{[g]}$ which -- by abuse of notation -- we denote by $\varphi$ as well. The structure equation~\eqref{eq:struceqforA} implies that $\varphi$ is torsion-free.
 
Finally, the identity $Q=-\nabla^{\prime\prime}_\varphi\alpha$ is an immediate consequence of the structure equation~\eqref{eq:struceqforr} and Lemma~\ref{lem:struceqcplxconriemsurf}.   
\end{proof}

%Clearly, if we identify $\R^2\simeq \C$ in the usual way, an orientation preserving $[g]$-conformal coframe at $p \in \Sigma$ is a linear map $T_p\Sigma \to \R^2$ that is complex linear with respect to the complex structure $J$ on $\Sigma$ indcued by $[g]$ and the orientation and the standard complex structure on $\C$. Recall~\eqref{righttranskae} that the complex-valued $1$-form $\zeta_1$ satisfies $(R_{\rot})^*\zeta_1=r^{-3}\e^{\i\phi}\zeta_1$ so that $\zeta_1\circ\ov{\zeta_1}$ defines a Riemannian metric on $\Sigma$ that is well-defined up to scale. The resuling conformal structure is of course $[g]$. Likewise, $\frac{\i}{2}\zeta_1\wedge\ov{\zeta_1}$ defines an orientation on $\Sigma$ which is the same as the given orientation on $\Sigma$. \com{Complete remaining parts of proof.}
We call a map $\psi : (M,g) \to (N,h)$ between two pseudo-Riemannian manifolds~\textit{weakly conformal} if $\psi^*h=f g$ for some smooth function $f$ on $M$. Note that we do not require $f$ to be positive. Two immediate consequences of Proposition~\ref{ppn:liftsplit} are: 
\begin{cor}\label{somecor}
Let $[g]$ be a conformal structure on $(\Sigma,\mathfrak{p})$. Then the lift $\widetilde{[g]} : (\Sigma,[g]) \to (Y,\Ym)$ is weakly conformal if and only if $Q\equiv 0$. Furthermore, the image of $[g] : \Sigma \to Z$ is a holomorphic curve if and only if $\alpha\equiv  0$. In particular, if $[g](\Sigma) \subset Z$ is a holomorphic curve, then $\widetilde{[g]}(\Sigma)\subset Y$ is a holomorphic contact curve.  
\end{cor}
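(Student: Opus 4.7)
The plan is to prove both statements by direct computation on the principal bundle $\projb^{\prime}_{[g]}\simeq F^+_{[g]}$, using the identities $\zeta_2=2\ov{a}\,\ov{\zeta_1}$ and $\zeta_3=k\zeta_1+2\ov{q}\,\ov{\zeta_1}$ from Lemma~\ref{lem:lift} together with the explicit formula
$$
\tau^*\Ym=\tfrac{1}{2}\bigl(\zeta_1\circ\ov{\zeta_3}+\zeta_3\circ\ov{\zeta_1}+\zeta_2\circ\ov{\zeta_2}\bigr)
$$
for the indefinite K\"ahler metric on $Y$. These identities reduce the question to a single algebraic substitution.

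For the first claim, I substitute the above expressions into $\tau^*\Ym$ and collect terms. This yields
$$
p^*\widetilde{[g]}^*\Ym=\tfrac{1}{2}\bigl(k+\ov{k}+4|a|^2\bigr)\zeta_1\circ\ov{\zeta_1}+q\,\zeta_1\circ\zeta_1+\ov{q}\,\ov{\zeta_1}\circ\ov{\zeta_1}.
$$
Next I appeal to the equivariance properties established in Proposition~\ref{ppn:liftsplit}: $\zeta_1\circ\ov{\zeta_1}$ descends to a representative metric $g_0\in[g]$, the factor $k+\ov{k}+4|a|^2$ descends to a smooth function $f$ on $\Sigma$, and $q\zeta_1^2+\ov{q}\,\ov{\zeta_1}^2$ descends to $Q+\ov{Q}=2\Re(Q)$. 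Thus $\widetilde{[g]}^*\Ym=fg_0+2\Re(Q)$, and weak conformality is equivalent to the vanishing of the trace-free remainder $2\Re(Q)$, which in turn is equivalent to $Q\equiv 0$ since a nonzero quadratic differential contributes a nonzero $(2,\!0)+(0,\!2)$ component.

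For the second claim, I use the intrinsic characterisation that $[g](\Sigma)\subset Z$ is a holomorphic curve if and only if the $[g]$-pullback of every $(1,\!0)$-form on $Z$ is a $(1,\!0)$-form on $\Sigma$. Pulled back to $\projb_{[g]}$, the $(1,\!0)$-forms on $Z$ are spanned by $\zeta_1$ and $\zeta_2=2\ov{a}\,\ov{\zeta_1}+c\,\zeta_1$, and $\zeta_1$ itself is already $(1,\!0)$ on $\Sigma$. Hence the condition reduces to the vanishing of the $\ov{\zeta_1}$-coefficient, i.e.\ $a\equiv 0$, equivalently $\alpha\equiv 0$. For the contact-curve statement, I note that $\alpha\equiv 0$ forces $Q=-\nabla^{\prime\prime}_\varphi\alpha\equiv 0$ by \eqref{eq:quaddifeq}, so by the argument just given $\widetilde{[g]}(\Sigma)$ is a complex submanifold of $Y$; meanwhile $\zeta_2=2\ov{a}\,\ov{\zeta_1}\equiv 0$ on $\projb^{\prime}_{[g]}$, so the tangent planes of $\widetilde{[g]}(\Sigma)$ annihilate the contact form and hence lie in the holomorphic contact distribution $\{\zeta_2=0\}$ identified in Lemma~\ref{cplx3fold}.

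The only real subtlety — the main point requiring care rather than a true obstacle — is bookkeeping: making sure that the various $\mathrm{CO}(2)$-equivariances from the right-action formulas are used consistently so that the tensors on $\projb^{\prime}_{[g]}$ actually descend to the claimed objects on $\Sigma$, and that the relations from Lemma~\ref{lem:lift} are applied on the correct bundle ($\projb_{[g]}$ for the characterisation of holomorphic curves in $Z$, and $\projb^{\prime}_{[g]}$ for the one in $Y$). Once this is organised, every step is linear-algebraic and immediate.
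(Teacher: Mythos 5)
Your proof is correct and follows essentially the same route as the paper: you substitute the relations of Lemma~\ref{lem:lift} into $\tau^*h_{\mathfrak{p}}$ to exhibit $Q$ as the $(2,\!0)$-obstruction to weak conformality, and you use the characterisation of the complex structures on $Z$ and $Y$ by the spans of $\zeta_1,\zeta_2$ resp.\ $\zeta_1,\zeta_2,\zeta_3$ (together with $\zeta_2=0$ for the contact condition, and $\alpha\equiv 0\Rightarrow Q\equiv 0$ via~\eqref{eq:quaddifeq}, just as the paper does via the structure equation~\eqref{eq:struceqforr}) for the remaining claims. The only cosmetic point is that $\zeta_1\circ\ov{\zeta_1}$ does not by itself descend to $\Sigma$ -- only the $\mathrm{CO}(2)$-invariant combination $\left(4|a|^2+k+\ov{k}\right)\zeta_1\circ\ov{\zeta_1}$ does, which is then pointwise a multiple of any chosen representative metric -- but this does not affect your argument.
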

\begin{rmk}
Here we call a holomorphic curve $\Sigma\subset Y$ a~\textit{contact curve} if its tangent bundle is contained in the (holomorphic) contact structure of $Y$.
\end{rmk} 
\begin{proof}[Proof of Corollary~\ref{somecor}]
By construction, the metric $\Ym$ has the property that its pullback to $\projb$ is
$$
\tau^*\Ym=\frac{1}{2}\left(\zeta_1\circ \ov{\zeta_3}+\zeta_3\circ\ov{\zeta_1}+\zeta_2\circ\ov{\zeta_2}\right).
$$
Therefore, from~\eqref{someid} and~\eqref{someid2} it follows that 
\begin{equation}\label{pullbackmetric}
%\zeta\circ\left(\ov{c}\ov{\zeta}+q\zeta\right)+\left(c\zeta+\ov{q}\ov{\zeta}\right)\circ \ov{\zeta}+|a|^2\zeta\circ\ov{\zeta}=
p^*\left(\widetilde{[g]}^*\Ym\right)=\frac{1}{2}\left(4|a|^2+(k+\ov{k})\right)\zeta_1\circ\ov{\zeta_1}+q\,\zeta_1\circ\zeta_1+\ov{q}\,\ov{\zeta_1}\circ\ov{\zeta_1}. 
\end{equation}
Since a complex-valued $1$-form on $\Sigma$ is a $(1,\! 0)$-form for the complex structure defined by $[g]$ and the orientation if and only if its $p$-pullback to $\projb^{\prime}_{[g]}$ is a complex multiple of $\zeta_1$, equation~\eqref{pullbackmetric} implies that $\widetilde{[g]}^*\Ym$ is weakly conformal to $[g]$ if and only if $q$ vanishes identically. The first claim follows.  

The second part of the claim is an immediate consequence of~\eqref{someid} and the characterisation of the complex structures on $Z,Y$ in terms of $\zeta_1,\zeta_2,\zeta_3$ and the characterisation of the holomorphic contact structure in terms of $\zeta_2=0$.  
\end{proof}
\begin{rmk}
Recall that if $\psi : (\Sigma,[g]) \to (N,h)$ is a map from a Riemann surface into a (pseudo-)Riemannian manifold, then the $(2,\! 0)$-part of the pulled back metric $\psi^*h$ is called the~\textit{Hopf differential} of $\psi$. Therefore~\eqref{pullbackmetric} implies that quadratic differential $Q$ is the Hopf differential of $\widetilde{[g]}$. 
\end{rmk}
Proposition~\ref{ppn:liftsplit} shows that for every choice of a conformal structure $[g]$ on $\Sigma$ we obtain a section $\alpha$ of $K_{\Sigma}^2\otimes \ov{K_{\Sigma}^{*}}$, as well as a connection $\varphi$ on the principal $\mathrm{GL}(1,\C)$-bundle of complex-linear coframes of $(\Sigma,[g])$. Since $K_{\Sigma}^2\otimes \ov{K_{\Sigma}^{*}}$ is a subbundle of $T^*_{\C}\Sigma^2\otimes T_{\C}\Sigma$, we may use the canonical real structure of the latter bundle to take the real part of $\alpha$. Consequently, the real part of $\alpha$ is a $1$-form on $\Sigma$ with values in $\mathrm{End}(T\Sigma)$. We have already encountered an endomorphism valued $1$-form $A_{[g]}$ whose properties we discussed in Theorem~\ref{quasilinop1}. In Corollary~\ref{uniquepoints} we have also seen that the choice of a conformal structure $[g]$ on $(\Sigma,\mathfrak{p})$ determines a unique $[g]$-conformal connection ${}^{[g]}\nabla$ so that ${}^{[g]}\nabla+A_{[g]}$ defines $\mathfrak{p}$. On the other hand, $\varphi$ also induces a $[g]$-conformal connection on $TM$ which we denote by $\nabla_{\varphi}$.    
\begin{ppn}\label{ppn:rerho=alpha}
We have:
\begin{align}
\label{eq:keyid2}\nabla_{\varphi}&={}^{[g]}\nabla,\\
\label{keyid}2\Re(\alpha)&=A_{[g]},\\
\label{keyid3}p^*\left(|A_{[g]}|^2_g\,d\mu_g\right)&=2\i|a|^2\zeta_1\wedge\ov{\zeta_1}=-\frac{\i}{2}\zeta_2\wedge\ov{\zeta_2}. 
\end{align}
\end{ppn}
%\tm{Write down proof}
Since a $[g]$-conformal connection ${}^{[g]}\nabla$ has holonomy in $\mathrm{CO}(2)$, it corresponds to a unique torsion-free principal $\mathrm{CO}(2)$-connection $\varphi$ on $F^+_{[g]}$, see for instance~\cite{MR1427757}. Before proving Proposition~\ref{ppn:rerho=alpha} it is helpful to see explicitly how the principal connection $\varphi$ is constructed from ${}^{[g]}\nabla$. The $[g]$-conformal connection ${}^{[g]}\nabla$ can be written as
\begin{equation}\label{eq:confconform}
{}^{(g,\beta)}\nabla={}^g\nabla+g\otimes \beta^{\sharp}-\beta\otimes\mathrm{Id}-\mathrm{Id}\otimes \beta, 
\end{equation}
where $g\in [g]$ and $\beta$ is a $1$-form on $M$ with $g$-dual vector field $\beta^{\sharp}$. Let $g_{ij}=g_{ji}$ be the unique real-valued functions on $F^+$ so that $\upsilon^*g=g_{ij}\omega^i\otimes \omega^j$. Let $\psi=(\psi^i_j)$ denote the Levi-Civita connection form of $g$, so that we have the structure equations. %Let furthermore $A^i_{jk}$ be the unique real-valued functions on $F^+$ representing $A_{[g]}$. 
\begin{align*}
\d\omega^i&=-\psi^i_j\wedge\omega^j,\\
\d g_{ij}&=g_{ik}\psi^k_j+g_{kj}\psi^k_i
\end{align*}
as well as
\begin{equation}\label{someeq37}
\d \psi^i_j+\psi^i_k\wedge\psi^k_j=g_{jk}K_g\omega^i\wedge\omega^k,
\end{equation}
where the real-valued function $K_g$ on $F^+$ is (the pullback of) the Gauss curvature of $g$. Therefore, writing $\upsilon^*\beta=b_i\omega^i$ for real-valued functions $b_i$ on $F^+$, the connection $1$-form of~\eqref{eq:confconform} is
$$
\eta^i_j=\psi^i_j+\left(b_kg^{ki}g_{jl}-\delta^i_jb_l-\delta^i_lb_j\right)\omega^l,
$$ 
where the real-valued functions $g^{ij}=g^{ji}$ on $F^+$ satisfy $g^{ik}g_{kj}=\delta^i_j$. The equivariance properties of the functions $b_i$ imply that there exist unique real-valued functions $b_{ij}$ on $\F$ so that
\begin{equation}\label{someeq38}
\d b_i=b_j\psi^j_i+b_{ij}\omega^j. 
\end{equation}
From the equivariance properties of the functions $g_{ij}$ it follows that the conditions $g_{11}= g_{22}$ and $g_{12}=0$ define a reduction of $\upsilon : F^+ \to \Sigma$ to the $\mathrm{CO}(2)$-subbundle of complex linear coframes of $F^+_{[g]} \to \Sigma$ of $(\Sigma,[g])$. On $F^+_{[g]}$ we obtain
$$
0=\d g_{12}=g_{11}\psi^1_2+g_{12}\psi^2_2+g_{12}\psi^1_1+g_{22}\psi^2_1=g_{11}(\psi^1_2+\psi^2_1)
$$
and hence $\psi^2_1=-\psi^1_2$. Likewise, we have
\begin{align*}
0&=\d g_{11}-\d g_{22}=2\left(g_{11}\psi^1_1+g_{12}\psi^2_1\right)-2\left(g_{12}\psi^1_2+g_{22}\psi^2_2\right)\\
&=2g_{11}\left(\psi^1_1-\psi^2_2\right)
\end{align*}
so that $\psi^1_1=\psi^2_2$. Idenfifying $\R^2\simeq \C$, we may think of $\omega=(\omega^i)$ as taking values in $\C$. If we define $\varphi:=\frac{1}{2}\left(\eta^1_1+\eta^2_2\right)+\frac{\i}{2}\left(\eta^2_1-\eta^1_2\right)$, 
we obtain
\begin{align}\label{eq:pullbackconformframe}
\varphi=\left(\psi^1_1-b_1\omega^1-b_2\omega^2\right)+\i\left(\psi^2_1+b_2\omega^1-b_1\omega^2\right)
\end{align}
Using this notation the first structure equation can be written in complex form
$$
\d \omega=-\varphi\wedge\omega, 
$$
hence $\varphi$ defines a torsion-free principal $\mathrm{CO}(2)$-connection on $F^+_{[g]}$. 

\begin{proof}[Proof of Proposition~\ref{ppn:rerho=alpha}] Without loosing generality, we can assume that $\mathfrak{p}$ is defined by ${}^{[g]}\nabla+A_{[g]}$ for some $[g]$-conformal connection ${}^{[g]}\nabla$ and some $1$-form $A_{[g]}$ having all the properties of Theorem~\ref{quasilinop1}. 
Recall~\eqref{cartconpref} that the choice of a representative connection $\nabla \in \mathfrak{p}$ gives an identification $\projb \simeq F^+\rtimes\R_2$ of Cartan's bundle so that the Cartan connection form becomes
\begin{equation}\label{eq:petra1}
\theta=\begin{pmatrix} -\frac{1}{3}\tr \eta-\xi \omega & \d \xi-\xi\eta-(S\omega)^t-\xi\omega\xi\\  \omega & \eta-\frac{1}{3}\mathrm{I}\tr \eta+\omega\xi  \end{pmatrix}.
\end{equation}
We will construct Cartan's connection for the representative connection 
\begin{equation}\label{eq:yetanotherid}
{}^{(g,\beta)}\nabla+A_{[g]}={}^g\nabla+g\otimes \beta^{\sharp}-\beta\otimes\mathrm{Id}-\mathrm{Id}\otimes \beta+A_{[g]}. 
\end{equation}
Let $A^i_{jk}$ denote the real-valued functions on $F^+$ representing $A_{[g]}$. In particular, we have 
\begin{equation}\label{eq:petra3}
A^i_{jk}=A^i_{kj}\quad \text{and} \quad A^l_{il}=0.
\end{equation}
On $F^+$ the connection form of~\eqref{eq:yetanotherid} is given by
\begin{equation}\label{eq:petra2}
\eta^i_j=\psi^i_j+\left(b_kg^{ki}g_{jl}-\delta^i_jb_l-\delta^i_lb_j+A^i_{jl}\right)\omega^l,
\end{equation}
By definition, the pullback bundle $P_{[g]}$ is the subbundle of $F^+\times \R_2$ defined by the equations $g_{11}=g_{22}$ and $g_{12}=0$. Now on $P_{[g]}\simeq F^+_{[g]}\times \R_2$ we have $\psi^2_1=-\psi^1_2$ and $\psi^1_1=\psi^2_2$. Using~\eqref{eq:petra1},~\eqref{eq:petra3} and~\eqref{eq:petra2} we compute
\begin{align*}
\zeta_2&=(\theta^1_1-\theta^2_2)+\i\left(\theta^1_2+\theta^2_1\right)\\
&=\psi^1_1-\psi^2_2+\left(\xi_1+2A^1_{11}\right)\omega^1+\left(-\xi_2-2A^2_{22}\right)\omega^2\\
&\phantom{=}+\i\left(\psi^1_2+\psi^2_1+\left(\xi_2-2A^2_{22}\right)\omega^1+\left(\xi_1-2A^1_{11}\right)\omega^2\right)\\
%&=\frac{1}{2}\left(A^1_{11}+A^2_{12}+2\xi_1+\i(A^2_{11}+A^2_{22}+2\xi_2)\right)(\omega^1+\i \omega^2)\\
%&\phantom{=}+\frac{1}{2}\left(A^1_{11}-3A^2_{12}+\i(3A^2_{11}-A^2_{22})\right)\left(\omega^1-\i\omega^2\right)\\
&=2\ov{a}\ov{\zeta_1}+c\zeta_1,
\end{align*}
where
\begin{align}
\label{eq:keyidupstairs}a&=A^1_{11}+\i A^2_{22},\\
c&=\xi_1+\i \xi_2
\end{align}
and we have used that on $F^+_{[g]}$  
$$
\delta_{il}A^l_{jk}=\delta_{jl}A^l_{ik},
$$
which follows from Theorem~\ref{quasilinop1} (vi). Recall that $P^{\prime}_{[g]}$ was defined by the equation $c=0$. Hence on $P^{\prime}_{[g]}\simeq F^+_{[g]}$ the function $\xi$ vanishes identically. Using this we compute
$$
\varphi=-\frac{1}{2}\left(3\theta^0_0+\i\left(\theta^1_2-\theta^2_1\right)\right)=\psi^1_1-b_1\omega^1-b_2\omega^2+\i\left(\psi^2_1+b_2\omega^1-b_1\omega^2\right).
$$
This is precisely~\eqref{eq:pullbackconformframe}. It follows that the connection defined by $\varphi$ is the same as the induced torsion-free connection on $F^+_{[g]}$ by ${}^{[g]}\nabla$. This proves~\eqref{eq:keyid2}. 

Suppose $x=(x^i) : U \to \R^2$ are local orientation preserving $[g]$-isothermal coordinates on $\Sigma$ and write $z=(x^1+\i x^2)$. Applying the exterior derivative to $z$ we obtain a local section $\tilde{z} : U \to F^+_{[g]}$ so that
$$
A_{[g]}=\tilde{z}^*A^i_{jk}\d x^j\otimes \d x^k\otimes \frac{\partial}{\partial x^i}.
$$
By definition of $\alpha$ we have
$$
\alpha=\tilde{z}^*a\,\d z\otimes\d z\otimes \otimes \frac{\partial}{\partial \ov{z}},
$$
hence~\eqref{keyid} is an immediate consequence of~\eqref{eq:keyidupstairs}. 

Finally, in our coordinates we obtain
$$
|A_{[g]}|_g^2\,d\mu_g=4|a|^2\d x^1\wedge \d x^2, 
$$
so that $p^*\left(|A_{[g]}|_g^2\,d\mu_g\right)=2\i|a|^2\zeta_1\wedge\ov{\zeta_1}=-\frac{\i}{2}\zeta_2\wedge\ov{\zeta_2}$, as claimed.
\end{proof}

Note that $A_{[g]}$ vanishes identically if and only if $\alpha$ vanishes identically. Therefore, as an immediate consequence of Proposition~\ref{ppn:rerho=alpha}, Corollary~\ref{uniquepoints} and Corollary~\ref{somecor}, we obtain an alternative proof of~\cite[Theorem 3]{MR3144212} (see also~\cite{MR3043749} for a `generalisation' to higher dimensions):
\begin{thm}\label{weylmetri}
A conformal structure $[g]$ on $(\Sigma,\mathfrak{p})$ is preserved by a conformal connection defining $\mathfrak{p}$ if and only if the image of $[g] : \Sigma \to Z$ is a holomorphic curve. 
\end{thm}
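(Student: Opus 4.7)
The plan is to chain together the three results the paper flags just before the statement: Theorem~\ref{quasilinop1} (v) / Corollary~\ref{uniquepoints}, Proposition~\ref{ppn:rerho=alpha}, and Corollary~\ref{somecor}. The goal is to reduce both sides of the biconditional to the single scalar condition $\alpha\equiv 0$.

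First I would translate the left-hand side of the equivalence into the vanishing of $A_{[g]}$. By Theorem~\ref{quasilinop1}~(v), the projective structure $\mathfrak{p}$ is defined by some $[g]$-conformal connection if and only if the obstruction $1$-form $A_{[g]}$ computed from (any representative of) $\mathfrak{p}$ and $[g]$ vanishes identically. Corollary~\ref{uniquepoints} then guarantees that when this happens the $[g]$-conformal connection in question is the canonically distinguished connection $\con$. So the left-hand side of the theorem is equivalent to $A_{[g]}\equiv 0$ on $\Sigma$.

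Next I would pass from $A_{[g]}$ to $\alpha$. Proposition~\ref{ppn:rerho=alpha} gives the identities $2\Re(\alpha)=A_{[g]}$ and, more usefully for vanishing, the pointwise formula $p^*(|A_{[g]}|^2_g\,d\mu_g)=2\i|a|^2\,\zeta_1\wedge\ov{\zeta_1}$, where $a$ is the function on $P^{\prime}_{[g]}\simeq F^+_{[g]}$ representing $\alpha$. Since $\zeta_1\wedge\ov{\zeta_1}$ is nowhere zero on $P^{\prime}_{[g]}$, this shows $A_{[g]}\equiv 0$ if and only if $a\equiv 0$, i.e.\ if and only if $\alpha\equiv 0$. (Equivalently, one may argue directly that $\Re: K_{\Sigma}^2\otimes\ov{K_{\Sigma}^{-1}}\hookrightarrow S^2(T^*\Sigma)\otimes T\Sigma$ is a real-linear injection, so $2\Re(\alpha)=0$ already forces $\alpha=0$.)

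Finally I would invoke the second half of Corollary~\ref{somecor}, which identifies the vanishing of $\alpha$ with the image $[g](\Sigma)\subset Z$ being a holomorphic curve. Concatenating the three equivalences
\[
\mathfrak{p}\text{ is defined by a }[g]\text{-conformal connection} \iff A_{[g]}\equiv 0 \iff \alpha\equiv 0 \iff [g](\Sigma)\subset Z\text{ is holomorphic}
\]
yields the theorem. There is no real obstacle: all the analytic content has already been packaged into the referenced results, and the only thing to be careful about is that $\alpha$ is a section of a complex line subbundle of $S^2(T^*\Sigma)\otimes T\Sigma$, so that passing between the vanishing of $\alpha$ and the vanishing of its real part $\tfrac{1}{2}A_{[g]}$ is tautological once one records either the pointwise norm identity~\eqref{keyid3} or the injectivity of $\Re$ on this subbundle.
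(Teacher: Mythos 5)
Your proposal is correct and takes essentially the same route as the paper, which deduces Theorem~\ref{weylmetri} as an immediate consequence of Corollary~\ref{uniquepoints}, Proposition~\ref{ppn:rerho=alpha} and Corollary~\ref{somecor}, noting exactly as you do that $A_{[g]}$ vanishes identically if and only if $\alpha$ does. Your justification of that last equivalence (via the pointwise identity~\eqref{keyid3}, or the injectivity of taking real parts on $K_{\Sigma}^2\otimes\ov{K_{\Sigma}^{-1}}$) simply makes explicit the one detail the paper leaves implicit.
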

\begin{rmk}
Locally the bundle $Z \to \Sigma$ always admits sections having holomorphic image and therefore every torsion-free connection on $T\Sigma$ is locally projectively equivalent to a conformal connection (see~\cite{MR3144212} for additional details). 
\end{rmk}
%\begin{defn}
%We call a conformal structure $[g]$ on $\Sigma$ \textit{spacelike} or \textit{timelike} if $\widetilde{[g]}^*\Ym$ is positive definite or negative definite. Furthermore, we call a spacelike or timelike conformal structure~\textit{minimal} if the mean curvature vector of the submanifold $\widetilde{[g]}(\Sigma)\subset Y$ vanishes identically. 
%\end{defn}

\subsection{Derivation of the variational equations}

Applying a technique from~\cite{MR772125}, we compute the variational equations for the functional $\mathcal{E}_{\mathfrak{p}}$. For a compact domain $\Omega\subset \Sigma$ and a section $[g] : \Sigma \to Z$ we write
$$
\mathcal{E}_{\mathfrak{p},\Omega}([g])=\int_{\Omega}|A_{[g]}|^2_gd\mu_g. 
$$ 
\begin{defn}
We say $[g]$ is an $\mathcal{E}_{\mathfrak{p}}$-critical point or that $[g]$~\textit{is extremal for the projective structure} $\mathfrak{p}$ if for every compact $\Omega\subset \Sigma$ and for every smooth variation $[g]_t : \Sigma \to Z$ with support in $\Omega$, we have
$$
\frac{\d}{\d t}\bigg|_{t=0}\mathcal{E}_{\mathfrak{p},\Omega}([g]_t)=0. 
$$
\end{defn}
Using this definition we obtain:
\begin{thm}\label{thm:weakconf}
Let $(\Sigma,\mathfrak{p})$ be an oriented projective surface. A conformal structure $[g]$ on $\Sigma$ is extremal for $\mathfrak{p}$ if and only if $\widetilde{[g]} : (\Sigma,[g]) \to (Y,\Ym)$ is weakly conformal.  
\end{thm}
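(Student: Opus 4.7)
The plan is to follow the moving-frames approach of Griffiths~\cite{MR772125}, lifting the functional to the adapted bundle $\projb^{\prime}_{[g]}\simeq F^+_{[g]}$ and exploiting the structure equations of Lemma~\ref{lem:struceqexcon}. By Proposition~\ref{ppn:rerho=alpha}(iii), we already know that on $\projb^{\prime}_{[g]}$
$$
p^*\bigl(|A_{[g]}|^2_g\,d\mu_g\bigr)=2\i|a|^2\,\zeta_1\wedge\overline{\zeta_1},
$$
so the functional $\mathcal{E}_{\mathfrak{p},\Omega}$ is encoded by the function $|a|^2$ together with the canonical volume form $\i\zeta_1\wedge\overline{\zeta_1}$ coming from the Cartan geometry. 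Our task is to compute the derivative of this integral under a compactly supported variation $[g]_t$.

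First I would globalise the variation. The family of lifts $\widetilde{[g]}_t:\Sigma\to Y$ assembles into a smooth map $\widetilde{[g]}:\mathbb{R}\times\Sigma\to Y$, and the family of adapted bundles $\projb^{\prime}_{[g]_t}\subset\projb$ assembles into a smooth $5$-manifold $\mathcal{P}\subset\mathbb{R}\times\projb$. On $\mathcal{P}$ the functions $a,k,q$ of Lemma~\ref{lem:lift} become smooth functions of $(t,u)$, the forms $\zeta_1,\zeta_2,\zeta_3,\varphi$ pull back, and the defining relations $\zeta_2=2\overline{a}\,\overline{\zeta_1}$ and $\zeta_3=k\zeta_1+2\overline{q}\,\overline{\zeta_1}$ of Lemma~\ref{lem:lift} hold identically in $t$. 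The derivative at $t=0$ of the section $\widetilde{[g]}_t$, projected to $Z$, is precisely the Beltrami differential $\mu$ on $\Sigma$ representing the infinitesimal deformation of $[g]$; it is a section of $\overline{K_{\Sigma}}\otimes K_{\Sigma}^{-1}$ and can be prescribed arbitrarily with compact support in $\Omega$.

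Next I would differentiate the defining identity $\zeta_2=2\overline{a}\,\overline{\zeta_1}$ in the $t$-direction on $\mathcal{P}$ and expand $d\zeta_2$ by means of the structure equations of Lemma~\ref{cplx3fold} and Lemma~\ref{lem:struceqexcon}. The key observation is that the structure equation
$$
da=a^{\prime}\zeta_1-q\,\overline{\zeta_1}+2a\varphi-a\overline{\varphi}
$$
identifies the $(0,\!1)$-component of $\nabla_{\varphi}\alpha$ with $-q$. Pushing the $t$-derivatives through, one obtains an expression for $\partial_t|a|^2$ in terms of the variation parameter $\mu$, the function $q$, and an exact piece that involves $a,a^{\prime},k,k^{\prime},k^{\prime\prime}$. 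Integrating the variation over $\Omega$ and applying Stokes's theorem, the exact piece is killed by the compact support of the variation, and the computation collapses to an identity of the form
$$
\tfrac{d}{dt}\Big|_{t=0}\mathcal{E}_{\mathfrak{p},\Omega}([g]_t)\;=\;C\,\Re\!\int_{\Sigma}\langle Q,\mu\rangle
$$
with $C\neq 0$, where $\langle Q,\mu\rangle$ denotes the canonical pairing of the quadratic differential $Q$ with the Beltrami differential $\mu$ (a $(1,\!1)$-form on $\Sigma$).

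Since this pairing is nondegenerate and $\mu$ may be prescribed arbitrarily subject to compact support in $\Omega$, the extremality of $[g]$ is equivalent to $Q\equiv 0$. By Corollary~\ref{somecor} this is precisely the condition that $\widetilde{[g]}:(\Sigma,[g])\to(Y,\Ym)$ be weakly conformal, giving the theorem. The main obstacle is the bookkeeping of the moving-frame calculation on $\mathcal{P}$: one must enlarge the structure equations of Lemma~\ref{lem:struceqexcon} by their $dt$-components, identify the infinitesimal version of the constraint that forces $\widetilde{[g]}_t$ to be the canonical lift of $[g]_t$ (rather than an arbitrary section of $Y\to\Sigma$), and verify that after integration by parts all terms involving $a^{\prime},k,k^{\prime},k^{\prime\prime}$ assemble into an exact form, leaving only the pairing with $q$. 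This is exactly the kind of cancellation that the choice of the $1$-form $\varphi$ in~\eqref{eq:defcurvandcon} and the resulting structure equations are engineered to produce.
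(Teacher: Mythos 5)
Your plan is, in outline, the proof the paper gives: the same thickened adapted bundle $\projb^{\prime}_{[g]_t}\subset\Sigma\times\projb\times(-\eps,\eps)$, the same first-variation formula -- the paper obtains $f^{\prime}(0)=\i\int_{\Omega}\left.\left(qB+\ov{q}\ov{B}\right)\zeta_1\wedge\ov{\zeta_1}\right|_{t=0}$, which is exactly your pairing $\Re\int_\Sigma\langle Q,\mu\rangle$ once $B|_{t=0}$ is identified with the Beltrami differential -- and the same conclusion via Corollary~\ref{somecor}. One mechanical difference is worth noting: the paper never differentiates $|a|^2$ and never integrates by parts. It sets $\mathsf{A}=-\frac{\i}{2}\zeta_2\wedge\ov{\zeta_2}$, which by~\eqref{keyid3} pulls back the integrand, and applies $\mathrm{L}_{\partial_t}=\partial_t\inc\d+\d\left(\partial_t\inc\,\cdot\right)$, the exact piece dying by Stokes and compact support. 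Since the structure equations give $\d\mathsf{A}=\frac{\i}{2}\left(\zeta_1\wedge\zeta_3\wedge\ov{\zeta_2}-\zeta_2\wedge\ov{\zeta_1}\wedge\ov{\zeta_3}\right)$, substituting $\zeta_2=2\ov{a}\,\ov{\zeta_1}+B\,\d t$ and $\zeta_3=k\zeta_1+2\ov{q}\,\ov{\zeta_1}+C\,\d t$ makes every term in $a^{\prime},k,k^{\prime},k^{\prime\prime},C$ drop out automatically; the cancellation you flag as ``the main obstacle'' is engineered away rather than verified. (Incidentally, your second paragraph asserts that the relations of Lemma~\ref{lem:lift} ``hold identically in $t$'' on the thickened bundle -- they do not; the $\d t$-components $B,C$ are precisely the point, as you yourself correct at the end.)

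The one substantive gap is the step you dispose of in a single sentence: that the Beltrami differential ``can be prescribed arbitrarily with compact support in $\Omega$.'' Extremality is tested only against variations through \emph{canonical lifts of conformal structures}, so what must actually be shown is that every compactly supported section of $\ov{K_{\Sigma}}\otimes K_{\Sigma}^{-1}$ is realised as the $\d t$-coefficient $B|_{t=0}$ of $\zeta_2$ on the adapted bundle -- the quantity that pairs with $q$ in the variation formula. This is not a soft equivalence to be quoted; the paper devotes the entire second half of the proof to it, introducing the conformally invariant functions $G,H$ on $\projb$ built from $\pi^*g=g_{ij}\theta^i_0\otimes\theta^j_0$, deriving the structure equation $\d G=G^{\prime}\zeta_1+G^{\prime\prime}\ov{\zeta_1}+H\zeta_2+\ov{G}\left(\ov{\varphi}-\varphi\right)$, defining $[g]_t$ by the zero locus of $G_t=G-tBH$, and checking that on $\left.\projb^{\prime}_{[g]_t}\right|_{t=0}$ one indeed gets $\zeta_2=-\frac{G^{\prime\prime}}{H}\ov{\zeta_1}+B\,\d t$. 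You do gesture at the right issue (``identify the infinitesimal version of the constraint that forces $\widetilde{[g]}_t$ to be the canonical lift''), but without a construction of this kind the bridge between the abstract deformation $\mu$ and the frame-level coefficient $B$ is missing, and with it the nondegeneracy argument that forces $q\equiv 0$.
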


\begin{proof}
Let $[g] : \Sigma \to Z$ be a conformal structure and $[g]_t : \Sigma \to Z$ a smooth variation of $[g]$ with support in some compact set $\Omega\subset \Sigma$ and with $|t|<\eps$. We consider the submanifold of $\Sigma\times\projb\times (-\eps,\eps)$ defined by
$$
\projb^{\prime}_{[g]_t}=\left\{(p,u,t_0) \in \Sigma\times\projb\times (-\eps,\eps)\,|\,(p,u)\in \projb^{\prime}_{[g]_{t_0}}
\right\}
$$
and denote by $\iota_{[g]_t} : \projb^{\prime}_{[g]_t} \to \Sigma \times \projb \times (-\eps,\eps)$ the inclusion map. On $\Sigma\times \projb\times (-\eps,\eps)$ we define the real-valued $2$-form
$$
\mathsf{A}=-\frac{\i}{2}\zeta_2\wedge\ov{\zeta_2},%.-\d t\wedge\left(\partial_t \inc\left(\zeta_2\wedge\ov{\zeta_2}\right)\right)\Bigg],
$$
where, by abuse of notation, we write $\zeta_2$ for the pullback of $\zeta_2$  to $\Sigma\times \projb\times (-\eps,\eps)$. Using the structure equations~\eqref{righttranskaeinf}, we compute
\begin{equation}\label{eq:mainform}
\d\mathsf{A}=\frac{\i}{2}\left(\zeta_1\wedge\zeta_3\wedge\ov{\zeta_2}-\zeta_2\wedge\ov{\zeta_1}\wedge\ov{\zeta_3}\right).
\end{equation}
 
%where $t$ denotes the projection onto the $(-\eps,\eps)$-factor. Note that by construction $\partial_t \inc \mathsf{A}=0$.\footnote{We use the notation $X\inc \alpha$ for the interior product of the vector field $X$ with the differential form $\alpha$.} 
Now Proposition~\ref{ppn:rerho=alpha} implies
$$
f(t_0):=\left.\mathcal{E}_{\mathfrak{p},\Omega}([g]_t)\right|_{t=t_0}=\int_{\Omega}\left.\left(\left(\iota_{[g]_t}\right)^*\mathsf{A}\right)\right|_{t=t_0}.
$$
Therefore
$$
f^{\prime}(0)=\int_{\Omega}\left.\left(\mathrm{L}_{\partial_t}(\iota_{[g]_t})^*\mathsf{A}\right)\right|_{t=0}=\int_{\Omega}\left.\left(\partial_t \inc (\iota_{[g]_t})^*\d \mathsf{A}\right)\right|_{t=0},
$$
where $\mathrm{L}_{\partial_t}$ denotes the Lie-derivative with respect to the vector field $\partial_t$. It follows from the proof of Lemma~\ref{lem:lift} that on $\projb^{\prime}_{[g]_t}$ there exist complex-valued functions $a,k,q,B,C$ such that
\begin{equation}\label{id8457}
\zeta_2=2\ov{a}\ov{\zeta_1}+B\d t\quad \text{and}\quad \zeta_3=k\zeta_1+2\ov{q}\ov{\zeta_1}+C \d t  
\end{equation}
where we now write $\zeta_i$ instead of $(\iota_{[g]_t})^*\zeta_i$. Combining~\eqref{eq:mainform} with~\eqref{id8457} gives
$$
(\iota_{[g]_t})^*\d \mathsf{A}=\i\left(qB+\ov{q} \ov{B}\right) \d t\wedge\zeta_1\wedge\ov{\zeta_1}
$$
so that
\begin{equation}\label{firstvariation}
f^{\prime}(0)=\i\int_{\Omega}\left.\left(qB+\ov{q}\ov{B}\right)\zeta_1\wedge\ov{\zeta_1}\right|_{t=0}.
\end{equation}
Recall that $\left(R_{\rot}\right)^*\zeta_2=\e^{2\i\phi}\zeta_2$ and therefore, by definition, the complex-valued function $B|_{t=0}$ satisfies 
$$
\left(R_{\rot}\right)^*\left(B|_{t=0}\right)=\e^{2\i\phi}\left(B|_{t=0}\right).
$$ 
Since $\left(R_{\rot}\right)^*\zeta_1=r^{-3}\e^{\i\phi}\zeta_1$ it follows that $B|_{t=0}$ represents a section of $\ov{K_{\Sigma}}\otimes K_{\Sigma}^{*}$ with support in $\Omega$. Here $K_{\Sigma}$ denotes the canonical bundle of $\Sigma$ with respect to the complex structure induced by the orientation and $[g]=[g]_t|_{t=0}$.

It remains to show that every such section in~\eqref{id8457} with support in $\Omega$ can be realised via some variation of $[g]$. We fix a representative metric $g \in [g]$. Let $g_{ij}=g_{ji}$ be the real-valued functions on Cartan's bundle $\projb$ so that $\pi^*g=g_{ij}\theta^i_0\otimes \theta^j_0$. In particular, from the equivariance properties (ii) of the Cartan connection $\theta$ it follows that
$$
\left(R_{b\rtimes a}\right)^*\begin{pmatrix} g_{11} & g_{12} \\ g_{21} & g_{22}\end{pmatrix}=(\det a)^2a^t\begin{pmatrix} g_{11} & g_{12} \\ g_{21} & g_{22}\end{pmatrix}a.
$$
Applying property (iii) of the Cartan connection this implies the existence of unique real-valued functions $g_{ijk}=g_{jik}$ so that
\begin{equation}\label{someid558}
\d g_{ij}=-2g_{ij}\theta^0_0+g_{kj}\theta^k_i+g_{ik}\theta^k_j+g_{ijk}\theta^k_0. 
\end{equation}
Consider the following conformally invariant functions 
$$
G=\frac{(g_{11}-g_{22})+2\i g_{12}}{\sqrt{g_{11}g_{22}-(g_{12})^2}}, \quad H=\frac{g_{11}+g_{22}}{\sqrt{g_{11}g_{22}-(g_{12})^2}}.
$$
Translating~\eqref{someid558} into complex form gives the following structure equation
\begin{equation}\label{eq:cplxformstruceqbelt}
\d G=G^{\prime}\zeta_1+G^{\prime\prime}\ov{\zeta_1}+H\zeta_2+\ov{G}\left(\ov{\varphi}-\varphi\right),
\end{equation}
for unique complex-valued functions $G^{\prime},G^{\prime\prime}$ on $\projb$. Clearly, the complex-valued functions $G^{\prime}$ and $G^{\prime\prime}$ can be expressed in terms of the functions $g_{ijk}$, as $\zeta_1=\theta^1_0+\i \theta^2_0$. In order to verify~\eqref{eq:cplxformstruceqbelt} it is thus sufficient to plug in the definitions of the functions $G,H$, the definitions of the forms $\zeta_2,\varphi$ and to use
$$
\d g_{ij}=-2g_{ij}\theta^0_0+g_{kj}\theta^k_i+g_{ik}\theta^k_j \quad \text{mod}\quad \theta^1_0,\theta^2_0.
$$
While this is somewhat tedious, it is straightforward, so we omit the computation. 

Fix a section of $\ov{K_{\Sigma}}\otimes K_{\Sigma}^{*}$ with respect to $[g]$ having support in $\Omega$. Such sections are well-known to correspond to endomorphisms of $T\Sigma$ that are trace-free and symmetric with respect to $[g]$. In particular, on $\projb$ there exist real-valued functions $(B^i_j)$ representing the corresponding endomorphism. The functions satisfy
$$
B^i_i=0 \quad \text{and} \quad g_{ij}B^j_k=g_{kj}B^j_i. 
$$
as well as the equivariance property
$$
\left(R_{b\rtimes a}\right)^*\begin{pmatrix} B^1_1 & B^1_2 \\ B^2_1 & B^2_2\end{pmatrix}=a^{-1}\begin{pmatrix} B^1_1 & B^1_2 \\ B^2_1 & B^2_2\end{pmatrix}a. 
$$
We define $B=\frac{1}{2}(B^1_1-B^2_2)+\frac{\i}{2}\left(B^1_2+B^2_1\right)$, then $B$ satisfies $(R_{z\rtimes\rot})^*B=\e^{2\i\phi}B$, hence for sufficiently small $t$ we may vary $[g]$ by defining $[g]_t$ via the zero-locus of the function
$$
G_t=G-tB H.
$$
Consequently, on
$$
\projb_{[g]_t}=\left\{(p,u,t_0)\in \Sigma\times \projb\times (-\eps,\eps)\,|\,(p,u) \in \projb_{[g]_{t_0}}\right\}
$$ 
we get
\begin{align*}
0&=\d G_t=\d G-\d tB H-t\d\left(B H\right)\\
&=G^{\prime}\zeta_1+G^{\prime\prime}\ov{\zeta_1}+H\zeta_2+\ov{G}\left(\ov{\varphi}-\varphi\right)-\d tB H-t\d\left(B H\right)\\
&=G^{\prime}\zeta_1+G^{\prime\prime}\ov{\zeta_1}+H\zeta_2+t\ov{B}H\left(\ov{\varphi}-\varphi\right)-\d tB H-t\d\left(B H\right) 
\end{align*}
In particular, if we evaluate this last equation on $\left.\projb_{[g]_t}\right|_{t=0}$, we obtain
$$
0=G^{\prime}\zeta_1+G^{\prime\prime}\ov{\zeta_1}+H\zeta_2-\d t B H
$$
Since $H$ is non-vanishing on $\left.\projb_{[g]_t}\right|_{t=0}$ we must have
$$
\zeta_2=-\frac{G^{\prime}}{H}\zeta_1-\frac{G^{\prime\prime}}{H}\ov{\zeta_1}+B\d t. 
$$
Since $\projb_{[g]_t}^{\prime}$ arises by reducing $\projb_{[g]_t}$, it follows that on $\left.\projb^{\prime}_{[g]_t}\right|_{t=0}$ we obtain
$$
\zeta_2=-\frac{G^{\prime\prime}}{H}\ov{\zeta_1}+B \d t,
$$
as desired. Finally, we now know that~\eqref{firstvariation} must vanish where $B$ is any complex-valued function representing an arbitrary section of $\ov{K_{\Sigma}}\otimes K_{\Sigma}^{*}$ with support in $\Omega$. This is only possible if $q|_{t=0}$ vanishes identically. Applying Corollary~\ref{somecor} proves the claim.   
\end{proof}
\begin{rmk}
Clearly, if $[g](\Sigma) \subset Z$ is a holomorphic curve, then $\widetilde{[g]} : \Sigma \to Y$ is weakly conformal. Using the structure equations this can be seen as follows. The image $[g](\Sigma)\subset Z$ is a holomorphic curve if and only if $\alpha$ vanishes identically. However, if $\alpha$ vanishes identically, then so does $a$ and hence~\eqref{eq:struceqforr} implies that $q$ vanishes identically as well.  Consequently, every projective structure $\mathfrak{p}$ locally admits a conformal structure $[g]$ so that $\widetilde{[g]}$ is weakly conformal.  
\end{rmk}
We conclude this section by showing that in the compact case $\mathcal{E}_{\mathfrak{p}}([g])$ is -- up to a topological constant -- just the Dirichlet energy of $\widetilde{[g]} : (\Sigma,[g])\to (Y,\Ym)$.  
\begin{lem}\label{lem:dirichlet}
Let $(\Sigma,\mathfrak{p})$ be a compact oriented projective surface. Then for every conformal structure $[g] : \Sigma \to Z$ we have
$$
\int_{\Sigma}|A_{[g]}|^2_g d\mu_g=2\pi\chi(\Sigma)+\frac{1}{2}\int_{\Sigma}\tr_g \widetilde{[g]}^*\Ym\, d\mu_g,
$$
where $\chi(\Sigma)$ denotes the Euler-characteristic of $\Sigma$. 
\end{lem}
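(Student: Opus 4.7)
The plan is to pull both sides back to $\projb^{\prime}_{[g]}$ via $p$, express them in the basic forms $\zeta_1,\bar\zeta_1,\varphi,\bar\varphi$, and use the Gauss--Bonnet identity~\eqref{eq:gaussbonnet} to account for the topological defect between $|A_{[g]}|^2_g\,d\mu_g$ and $\tfrac12\tr_g\widetilde{[g]}^*\Ym\,d\mu_g$.

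First I would compute the pullback of $\tr_g\widetilde{[g]}^*\Ym\,d\mu_g$. Starting from~\eqref{pullbackmetric}, the tensors $\zeta_1\circ\zeta_1$ and $\bar\zeta_1\circ\bar\zeta_1$ are tracefree with respect to any representative of $[g]$ (a routine check in local isothermal coordinates in which $\zeta_1$ pulls back to $\d z$), so the $q$-terms drop out and a short calculation yields
$$
p^*\bigl(\tr_g\widetilde{[g]}^*\Ym\,d\mu_g\bigr)=\tfrac{\i}{2}\bigl(4|a|^2+k+\bar k\bigr)\zeta_1\wedge\bar\zeta_1.
$$
Subtracting twice the identity $p^*(|A_{[g]}|^2_g\,d\mu_g)=2\i|a|^2\zeta_1\wedge\bar\zeta_1$ from~\eqref{keyid3} gives
$$
p^*\bigl(\tr_g\widetilde{[g]}^*\Ym\,d\mu_g-2|A_{[g]}|^2_g\,d\mu_g\bigr)=\i\!\left(\tfrac{k+\bar k}{2}-2|a|^2\right)\zeta_1\wedge\bar\zeta_1.
$$
The key observation is that the right-hand side is precisely the pullback of a curvature 2-form: the structure equation~\eqref{eq:struceqforA} reads $\d\varphi=(|a|^2+\tfrac12 k-\bar k)\zeta_1\wedge\bar\zeta_1$, and taking its complex conjugate (using $\overline{\zeta_1\wedge\bar\zeta_1}=-\zeta_1\wedge\bar\zeta_1$) produces
$$
\d\varphi-\d\bar\varphi=\bigl(2|a|^2-\tfrac{k+\bar k}{2}\bigr)\zeta_1\wedge\bar\zeta_1.
$$
Since $p^*\Phi=\d\varphi$, this yields the $2$-form identity $\tr_g\widetilde{[g]}^*\Ym\,d\mu_g-2|A_{[g]}|^2_g\,d\mu_g=-\i(\Phi-\bar\Phi)$ on $\Sigma$.

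It remains to integrate. By~\eqref{eq:gaussbonnet} applied to the $K_\Sigma^{-1}$-connection $\varphi$, $\int_\Sigma\i\Phi=2\pi\chi(\Sigma)$, and the same reasoning applied to $\bar\varphi$---the induced connection on the conjugate bundle $\overline{K_\Sigma^{-1}}$, whose first Chern class is $-\chi(\Sigma)$---gives $\int_\Sigma\i\bar\Phi=-2\pi\chi(\Sigma)$. Hence $\int_\Sigma\i(\Phi-\bar\Phi)=4\pi\chi(\Sigma)$, and rearranging delivers the claim. The main obstacle is the identification in the middle step: recognising that the combination $k+\bar k$ that appears in the symmetric trace matches, once the $|a|^2$-correction from~\eqref{keyid3} is accounted for, the coefficient of $\zeta_1\wedge\bar\zeta_1$ in $\d\varphi-\d\bar\varphi$ as produced by the structure equation. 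After that, everything is routine sign-chasing.
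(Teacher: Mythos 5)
Your proposal is correct and follows essentially the same route as the paper's proof: both pull everything back to $\projb^{\prime}_{[g]}$, use~\eqref{pullbackmetric} and~\eqref{keyid3} to express the two densities in terms of $|a|^2$ and $k+\ov{k}$ (the $q$-terms being trace-free), match the remainder with $\d\varphi-\d\ov{\varphi}$ via the structure equation~\eqref{eq:struceqforA}, and integrate using the Gauss--Bonnet identity~\eqref{eq:gaussbonnet}. The only cosmetic difference is that you evaluate $\int_\Sigma \i\,\ov{\Phi}=-2\pi\chi(\Sigma)$ through the conjugate bundle $\ov{K_\Sigma^{-1}}$, whereas the paper gets the same contribution by directly integrating $\tfrac{\i}{2}(\d\varphi-\d\ov{\varphi})$; your sign bookkeeping checks out throughout.
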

\begin{proof}
Recall from~\eqref{pullbackmetric} that
$$
p^*\left(\widetilde{[g]}^*\Ym\right)=\frac{1}{2}\left(4|a|^2+(k+\ov{k})\right)\zeta_1\circ\ov{\zeta_1}+q\,\zeta_1\circ\zeta_1+\ov{q}\,\ov{\zeta_1}\circ\ov{\zeta_1}.
$$
%where $s$ is the curvature function of the induced $[g]$-conformal connection $\psi$, that is, 
%$$
%\Psi=\d \psi=s\zeta_1\wedge\ov{\zeta_1}.
%$$ 
%In particular, we get
%$$
%\mathrm{Im}(\Psi)=-\frac{\i}{2}\left(\d\psi-\d\ov{\psi}\right)=-\frac{\i}{2}(s+\ov{s})\zeta_1\wedge\ov{\zeta_1}. 
%$$
Hence we obtain
$$
\frac{1}{2}\int_{\Sigma}\tr_g \widetilde{[g]}^*\Ym\, d\mu_g=\frac{1}{2}\int_{\Sigma}\left(4|a|^2+(k+\ov{k})\right)\frac{\i}{2}\zeta_1\wedge\ov{\zeta_1}.%=\int_{\Sigma}|A_{[g]}|^2_gd\mu_g\\
%+\int_{\Sigma}\mathrm{Im}(\Psi).
$$
Since
$$
\d \varphi=\left(|a|^2+\frac{1}{2}k-\ov{k}\right)\zeta_1\wedge\ov{\zeta_1},
$$
we get
$$
\frac{\i}{2}\left(\d\varphi-\d\ov{\varphi}\right)=\frac{1}{2}\left(4|a|^2-(k+\ov{k})\right)\frac{\i}{2}\zeta_1\wedge\ov{\zeta_1}
$$
and thus
\begin{align*}
\frac{1}{2}\int_{\Sigma}\tr_g \widetilde{[g]}^*\Ym\, d\mu_g&=\int_{\Sigma}2\i|a|^2\zeta_1\wedge\ov{\zeta_1}-\int_{\Sigma}\frac{\i}{2}(\d \varphi-\d\ov{\varphi})\\
&=\int_{\Sigma}|A_{[g]}|^2_gd\mu_g-2\pi\chi(\Sigma),
\end{align*}
where we have used~\eqref{eq:gaussbonnet} and~\eqref{keyid3}.
\end{proof}
As an obvious consequence of Lemma~\ref{lem:dirichlet} and Theorem~\ref{quasilinop1} we have the  lower bound:
\begin{thm}
Let $(\Sigma,\mathfrak{p})$ be a compact oriented projective surface. Then for every conformal structure $[g] : \Sigma \to Z$ we have
$$
\frac{1}{2}\int_{\Sigma}\tr_g \widetilde{[g]}^*\Ym\, d\mu_g\geqslant -2\pi\chi(\Sigma),
$$
with equality if and only if $\mathfrak{p}$ is defined by a $[g]$-conformal connection.
\end{thm}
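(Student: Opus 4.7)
The plan is to derive this as an immediate corollary of Lemma~\ref{lem:dirichlet} together with the nonnegativity of $|A_{[g]}|_g^2$ and property~(v) of Theorem~\ref{quasilinop1}. Specifically, Lemma~\ref{lem:dirichlet} rearranges to
$$
\frac{1}{2}\int_{\Sigma}\tr_g \widetilde{[g]}^*\Ym\, d\mu_g \;=\; \int_{\Sigma}|A_{[g]}|^2_g\,d\mu_g \;-\; 2\pi\chi(\Sigma).
$$
Since the pointwise quantity $|A_{[g]}|^2_g$ is a nonnegative real-valued function on $\Sigma$, its integral against the volume form $d\mu_g$ is a nonnegative real number, and the inequality $\tfrac{1}{2}\int_{\Sigma}\tr_g \widetilde{[g]}^*\Ym\, d\mu_g \geqslant -2\pi\chi(\Sigma)$ follows.

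For the equality case, I would observe that $\int_{\Sigma}|A_{[g]}|^2_g\,d\mu_g = 0$ if and only if $A_{[g]}$ vanishes identically (again because the integrand is a continuous nonnegative function). By Theorem~\ref{quasilinop1}(v), this last condition is equivalent to the existence of a $[g]$-conformal connection in the projective class $\mathfrak{p}$, i.e.\ to $\mathfrak{p}$ being defined by a $[g]$-conformal connection. This gives the characterization of the extremal case.

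There is no real obstacle here: all the substantive content is already packaged in Lemma~\ref{lem:dirichlet} (which itself rests on the Gauss--Bonnet identity~\eqref{eq:gaussbonnet} for the curvature of $\varphi$ and on the key identity~\eqref{keyid3} from Proposition~\ref{ppn:rerho=alpha}) and in Theorem~\ref{quasilinop1}(v). Consequently the proof reduces to a one-line rearrangement of Lemma~\ref{lem:dirichlet} followed by the elementary observation about vanishing of a nonnegative integrand.
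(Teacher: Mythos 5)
Your proposal is correct and coincides with the paper's own argument: the paper states this result as an immediate consequence of Lemma~\ref{lem:dirichlet} and Theorem~\ref{quasilinop1}, exactly as you do, rearranging the Dirichlet-energy identity and using that $\int_{\Sigma}|A_{[g]}|^2_g\,d\mu_g=0$ forces $A_{[g]}\equiv 0$, which by Theorem~\ref{quasilinop1}~(v) is equivalent to $\mathfrak{p}$ being defined by a $[g]$-conformal connection. Nothing is missing.
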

\section{Existence of critical points}

Clearly, if a projective structure $\mathfrak{p}$ is defined by a $[g]$-conformal connection, then the conformal structure $[g]$ is a critical point for $\mathcal{E}_\mathfrak{p}$ and moreover an absolute minimiser. In this final section we study the projective structures for which $\mathcal{E}_{\mathfrak{p}}$ admits a critical point in some more detail. In particular, we will prove that properly convex projective structures admit critical points.

Recall that the choice of a conformal structure $[g]$ on an oriented projective surface $(\Sigma,\mathfrak{p})$ determines a torsion-free principal $\mathrm{CO}(2)$-connection $\varphi$ on the bundle $F^+_{[g]}$ of complex linear coframes of $(\Sigma,[g])$ and a section $\alpha$ of $K_{\Sigma}^2\otimes \ov{K_{\Sigma}^{*}}$. Furthermore, the conformal structure $[g]$ is extremal for $\mathcal{E}_{\mathfrak{p}}$ if and only if $\nabla^{\prime\prime}_\varphi \alpha=0$. Conversely, let $(\Sigma,[g])$ be a Riemann surface. Let $\varphi$ be a torsion-free principal $\mathrm{CO}(2)$-connection on $F^+_{[g]}$ and $\alpha$ a section of $K_{\Sigma}^2\otimes \ov{K_{\Sigma}^{*}}$. Then Proposition~\ref{ppn:liftsplit}, Proposition~\ref{ppn:rerho=alpha} and Theorem~\ref{thm:weakconf} show that the conformal structure $[g]$ is extremal for the projective structure defined by $\nabla_{\varphi}+2\Re(\alpha)$ if and only if $\nabla_{\varphi}^{\prime\prime}\alpha\equiv 0$. Since the curvature of the connection induced by $\varphi$ on the complex line bundle $E=K^2_{\Sigma}\otimes\ov{K^*_{\Sigma}}$ is a $(1,\! 1)$-form, standard results imply~(see for instance~\cite[Prop.~1.3.7]{MR909698}) that there exists a unique holomorphic line bundle structure $\ov{\partial}_E$ on $E$, so that
$$
\ov{\partial}_E=\nabla^{\prime\prime}_\varphi. 
$$
Hence the variational equation $\nabla^{\prime\prime}_\varphi\alpha=0$ just says that $\alpha$ is holomorphic with respect to $\ov{\partial}_E$. Since the line bundle $E$ has degree
$$
\deg(E)=\deg(K^2_{\Sigma})-\deg\left(K^*_{\Sigma}\right)=-3\deg\left(K^*_\Sigma\right)=-3\chi(\Sigma), 
$$
we immediately obtain:

\begin{thm}
Suppose $\mathfrak{p}$ is a projective structure on the oriented $2$-sphere $S^2$ admitting an extremal conformal structure $[g]$. Then $\mathfrak{p}$ is defined by a $[g]$-conformal connection. 
\end{thm}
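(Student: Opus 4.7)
The plan is to combine the immediately preceding setup with a vanishing theorem on $\mathbb{CP}^1$. The paragraph before the theorem statement shows that an extremal conformal structure $[g]$ corresponds precisely to a section $\alpha$ of the line bundle $E=K_\Sigma^2\otimes\overline{K_\Sigma^{-1}}$ that is holomorphic with respect to the holomorphic structure $\bar\partial_E=\nabla^{\prime\prime}_\varphi$ induced by the torsion-free $\mathrm{CO}(2)$-connection $\varphi$. So the whole task is to force $\alpha\equiv 0$ and then invoke the already-established dictionary between $\alpha$ and $A_{[g]}$.

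First I would compute the degree of $E$ in the case $\Sigma=S^2$. The excerpt already records
$$
\deg(E)=\deg(K_\Sigma^2)-\deg(K_\Sigma^{-1})=-3\,\deg(K_\Sigma^{-1})=-3\chi(\Sigma),
$$
which for $\Sigma=S^2$ gives $\deg(E)=-6$. Next I would apply the Riemann--Roch theorem (equivalently the standard classification $H^0(\mathbb{CP}^1,\mathcal{O}(n))=0$ for $n<0$): on the Riemann surface underlying $S^2$, any holomorphic line bundle of strictly negative degree admits no non-zero global holomorphic sections. Since $\alpha$ is a global holomorphic section of $E$ with $\deg(E)=-6<0$, we conclude $\alpha\equiv 0$.

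Finally I would translate this vanishing back to the projective-geometric statement. By Proposition~\ref{ppn:rerho=alpha} we have $A_{[g]}=2\,\Re(\alpha)$, so $\alpha\equiv 0$ yields $A_{[g]}\equiv 0$. Property (v) of Theorem~\ref{quasilinop1} (or equivalently Corollary~\ref{uniquepoints}, which provides the distinguished conformal connection ${}^{[g]}\nabla$ with ${}^{[g]}\nabla+A_{[g]}\in\mathfrak{p}$) then implies that $\mathfrak{p}$ is defined by a $[g]$-conformal connection, which is the claim.

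There is essentially no obstacle here: the preceding discussion has reduced the problem to a cohomological vanishing statement, and the only substantive input is the degree computation together with Riemann--Roch on $\mathbb{CP}^1$. The one bookkeeping point to double-check is that the holomorphic structure $\bar\partial_E$ introduced just before the statement really is the one induced by the section $\alpha$ determined by $[g]$ in Proposition~\ref{ppn:liftsplit}, so that the variational condition $\nabla^{\prime\prime}_\varphi\alpha=0$ from Proposition~\ref{ppn:liftsplit} together with Theorem~\ref{thm:weakconf} indeed makes $\alpha$ a holomorphic section of $(E,\bar\partial_E)$; this compatibility is exactly what was set up in the paragraph preceding the theorem.
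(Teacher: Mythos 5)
Your proposal is correct and follows essentially the same route as the paper: the paper's proof likewise invokes Corollary~\ref{uniquepoints} to write $\mathfrak{p}$ as defined by ${}^{[g]}\nabla+A_{[g]}$, uses $\deg(E)=-3\chi(S^2)=-6<0$ to conclude that the only holomorphic section of $E$ is zero, and then applies $A_{[g]}=2\Re(\alpha)$ from Proposition~\ref{ppn:rerho=alpha}. The compatibility point you flag at the end is exactly what the paper settles in the paragraph preceding the theorem (via $Q=-\nabla^{\prime\prime}_{\varphi}\alpha$ and Theorem~\ref{thm:weakconf}), so there is no gap.
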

\begin{proof}
Suppose $[g]$ is an extremal conformal structure of $\mathcal{E}_{\mathfrak{p}}$. From Corollary~\ref{uniquepoints} we know that $\mathfrak{p}$ is defined by ${}^{[g]}\nabla+A_{[g]}$ for some $[g]$-conformal connection ${}^{[g]}\nabla$. Since $\chi(S^2)=2$, we have $\deg(E)=-6$ and hence the only holomorphic section of $E$ is the zero-section. It follows that $\alpha$ vanishes identically and since by Proposition~\ref{ppn:rerho=alpha} we have $A_{[g]}=2\Re(\alpha)$, so does $A_{[g]}$. 
\end{proof}
\begin{rmk}
Note that the projectively flat conformal connections on $S^2$ are classified in~\cite{MR3144212}. 
\end{rmk}
From the Riemann--Roch theorem we know that the space $H^0(\Sigma,E)$ of holomorphic sections of $E$ has dimension
$$
\dim_{\C}\,H^0(\Sigma,E)\geqslant \deg(E)+1-g_{\Sigma}=5g_{\Sigma}-5,
$$
where here $g_{\Sigma}$ denotes the genus of $\Sigma$. In particular, if $\Sigma$ has negative Euler-characteristic, then $\dim_{\C} H^0(\Sigma,E)$ will have positive dimension.  

\subsection{Convex projective structures}

Recall that a flat projective surface $(\Sigma,\mathfrak{p})$ has the property that $\Sigma$ can be covered with open subsets, each of which is diffeomorphic onto a subset of $\mathbb{RP}^2$ in such a way that the geodesics of $\mathfrak{p}$ are mapped onto (segments) of projective lines $\mathbb{RP}^1\subset \mathbb{RP}^2$. This condition turns out to be equivalent to $\Sigma$ carrying an atlas modelled on $\mathbb{RP}^2$, that is, an atlas whose chart transitions are restrictions of fractional linear transformations. On the universal cover $\tilde{\Sigma}$ of the surface the charts can be adjusted to agree on overlaps, thus defining a~\textit{developing map} $\mathrm{dev} : \tilde{\Sigma} \to \mathbb{RP}^2$, unique up to post-composition with an element of $\mathrm{SL}(3,\R)$. In addition, one obtains a \textit{monodromy representation} $\rho :\pi_1(\Sigma) \to \mathrm{SL}(3,\R)$ of the fundamental group $\pi_1(\Sigma)$ -- well defined up to conjugation -- making $\mathrm{dev}$ into an equivariant map. A flat projective structure is called~\textit{properly convex} if $\mathrm{dev}$ is a diffeomorphism onto a subset of $\mathbb{RP}^2$ which is bounded and convex. If $\Sigma$ is a compact orientable surface with negative Euler characteristic, then (the conjugacy class of) `the' monodromy representation $\rho$ of a properly convex projective structure is an element in the Hitchin component $\mathcal{H}_3$ of $\Sigma$ and conversely every element in $\mathcal{H}_3$ can be obtained in this way~\cite{MR1145415}.

Motivated by the circle of ideas discussed in the introduction, it is shown in~\cite{MR2402597} and~\cite{MR1828223} that on a compact oriented surface $\Sigma$ of negative Euler characterstic, the convex projective structures are parametrised in terms of pairs $([g],C)$, consisting of a conformal structure $[g]$ and a cubic differential $C$ that is holomorphic with respect to the complex structure induced by $[g]$ and the orientation. Indeed, given a holomorphic cubic differential $C$ on such a $\Sigma$, there exists a unique Riemannian metric $g$ in the conformal equivalence class $[g]$, so that
\begin{equation}\label{eq:wangsequation}
K_g=-1+2|C|^2_g,
\end{equation}
where $K_g$ denotes the Gauss curvature of $g$ and $|C|_g$ the pointwise norm of $C$ with respect to the Hermitian metric induced by $g$ on the third power of the canonical bundle $K_{\Sigma}$ of $\Sigma$. Now there exists a unique section $\alpha$ of $K_{\Sigma}^2\otimes \ov{K_{\Sigma}^{*}}$, so that $\alpha\otimes d\mu_g=C$, where here we think of the area form $d\mu_g$ of $g$ as a section of $K_{\Sigma}\otimes \ov{K_{\Sigma}}$. Consequently, we obtain a connection $\nabla={}^{g}\nabla+2\Re(\alpha)$ on $T\Sigma$. The projective structure defined by $\nabla$ is properly convex and conversely every properly convex projective structure arises in this way~\cite[Theorem 4.1.1, Theorem 4.2.1]{MR2402597}. The metric $g$ is known as the~\textit{affine metric} or~\textit{Blaschke metric}, due to the fact that its pullback to the universal cover $\tilde{\Sigma}$ of $\Sigma$ can be realised via some immersion $\tilde{\Sigma} \to \mathbb{A}^3$ as a complete hyperbolic affine $2$-sphere in the affine $3$-space $\mathbb{A}^3$. In particular,~\eqref{eq:wangsequation} is known as Wang's equations in the affine sphere literature~\cite{MR1178538}. We refer the reader to the survey articles~\cite{MR3329885},~\cite{MR2743442} as well as~\cite{arXiv:1002.1767} for additional details. 

Calling a conformal structure $[g]$ on $(\Sigma,\mathfrak{p})$~\textit{closed}, if the associated connection $\varphi$ on $F^+_{[g]}$ induces a flat connection on $\Lambda^2(T^*\Sigma)$, we obtain a novel characterisation of properly convex projective structures among flat projective structures:   
\begin{thm}\label{thm:charconvex}
Let $(\Sigma,\mathfrak{p})$ be a compact oriented flat projective surface of negative Euler characteristic. Suppose $\mathfrak{p}$ is properly convex, then the conformal equivalence class of the Blaschke metric is closed and extremal for $\mathcal{E}_{\mathfrak{p}}$. Conversely, if $\mathcal{E}_{\mathfrak{p}}$ admits a closed extremal conformal structure $[g]$, then $\mathfrak{p}$ is properly convex and $[g]$ is the conformal equivalence class of the Blaschke metric of $\mathfrak{p}$.       
\end{thm}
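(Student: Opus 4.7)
The plan is to match the canonical data $(\varphi,\alpha)$ that our framework attaches to a conformal structure $[g]$ with the Labourie--Loftin data (Blaschke metric together with a holomorphic cubic differential) describing properly convex projective structures, and then invoke the Labourie--Loftin correspondence in each direction.

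For the forward direction, if $\mathfrak{p}$ is properly convex then Labourie--Loftin produces a Blaschke metric $g$ in a conformal class $[g]$ and a holomorphic cubic differential $C$ such that $\mathfrak{p} = \mathfrak{p}({}^g\nabla + 2\Re(\alpha))$ with $\alpha\otimes d\mu_g = C$. By Corollary~\ref{uniquepoints} and Proposition~\ref{ppn:rerho=alpha}, the canonical $[g]$-conformal connection provided by our framework is precisely $\nabla_\varphi = {}^g\nabla$ and the associated section of $K_\Sigma^2\otimes\ov{K_\Sigma^{-1}}$ is $\alpha$. Since the Levi-Civita connection preserves the volume form, the induced connection on $\Lambda^2(T^*\Sigma)$ is flat, so $[g]$ is closed. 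Holomorphicity of $C$ combined with $\nabla_\varphi d\mu_g = 0$ gives $\nabla_\varphi''\alpha = 0$, and then \eqref{eq:quaddifeq} together with Theorem~\ref{thm:weakconf} yields extremality.

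For the converse, I would first use closedness to produce a distinguished metric in $[g]$. Since $\Sigma$ is oriented, $\Lambda^2(T^*\Sigma)$ is trivial, so a flat connection on it admits a global parallel nowhere-vanishing section; this is (up to a positive constant) the volume form $d\mu_g$ of a unique $g\in[g]$ modulo rescaling. Since any $[g]$-conformal connection preserving a volume form in $[g]$ is the Levi-Civita connection of the corresponding metric, $\nabla_\varphi = {}^g\nabla$ globally. Extremality gives $\nabla_\varphi''\alpha = 0$; setting $C := \alpha\otimes d\mu_g\in\Gamma(K_\Sigma^3)$ and using that $\nabla_\varphi$ preserves $d\mu_g$, we conclude that $C$ is holomorphic. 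By Corollary~\ref{uniquepoints} together with Proposition~\ref{ppn:rerho=alpha}, $\mathfrak{p}$ is defined by ${}^g\nabla + 2\Re(\alpha)$. It remains to verify that $g$ is the Blaschke metric for $([g],C)$, that is $K_g = -1 + 2|C|_g^2$; once this is established, Labourie--Loftin directly identifies $\mathfrak{p}$ as properly convex with Blaschke conformal class $[g]$.

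To deduce Wang's equation I would extract additional information from the structure equations of Lemma~\ref{lem:struceqexcon}. Flatness of $\mathfrak{p}$ (i.e.\ $L = 0$) together with $q\equiv 0$ (extremality) reduces $dq = 0$ to $k'' = 0$. Closedness, via the curvature computation $d\varphi + d\ov{\varphi} = \tfrac{3i}{2}(k-\ov{k})\zeta_1\wedge\ov{\zeta_1}$, is equivalent to $k$ being real; conjugating the structure equation for $dk$ then forces $k' = 0$ as well. Hence the $(1,1)$-form $\kappa$ represented by $k$ is parallel for $\nabla_\varphi = {}^g\nabla$, and since $d\mu_g$ is also parallel and nowhere vanishing, $\kappa = c\,d\mu_g$ for some constant $c\in\R$. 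Substituting into $d\varphi = (|a|^2 - \tfrac{1}{2}k)\zeta_1\wedge\ov{\zeta_1}$ and comparing with the Levi-Civita curvature identity derived from \eqref{eq:gaussbonnet} yields a pointwise relation $K_g = 2|a|^2 - c$. Gauss--Bonnet combined with $\chi(\Sigma)<0$ forces $c>0$, and the remaining positive-rescaling freedom in the choice of $g$ can be used to normalise $c = 1$; with the natural identification of $|a|^2$ at $g$-orthonormal coframes with $|C|_g^2$, this is Wang's equation.

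The main obstacle is this final identification: tracking the Cartan-theoretic functions on $F^+_{[g]}$ (the quantities $a$, $k$, their derivatives, and the constant $c$) back to the Riemannian invariants ($K_g$ and $|C|_g^2$) requires careful bookkeeping of normalisation factors, particularly to pin down $c=1$ and the identification $|a|^2 = |C|_g^2$. The remaining ingredients -- deducing $\nabla_\varphi = {}^g\nabla$ from closedness, holomorphicity of $C$ from extremality and parallelism of $d\mu_g$, and the vanishing of $k'$ and $k''$ from flatness combined with closedness -- are essentially automatic once Lemma~\ref{lem:struceqexcon} is in hand.
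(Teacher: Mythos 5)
Your forward direction is correct and essentially the paper's (the paper observes closedness via the equivalence with symmetry of the Ricci tensor of $\nabla_\varphi$, you via the parallel volume form; both work). In the converse, however, your very first step is a genuine gap: you claim that because $\Lambda^2(T^*\Sigma)$ is trivial, a flat connection on it admits a global parallel nowhere-vanishing section. Flatness only gives \emph{local} parallel sections; a global nowhere-vanishing parallel section exists if and only if the holonomy representation $\pi_1(\Sigma)\to\R^{>0}$ is trivial. Since $\chi(\Sigma)<0$ forces $b_1(\Sigma)>0$, the trivial real line bundle over $\Sigma$ carries flat connections of the form $\d+\beta$ with $\beta$ closed but not exact, whose holonomy is nontrivial and which admit no nonzero global parallel section. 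So closedness of $[g]$ alone does not hand you the distinguished metric, and everything you erect on it at that point -- the metric $g$, the identity $\nabla_\varphi={}^g\nabla$, and the holomorphicity of $C$ -- is unsupported where you introduce it.

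The gap is repairable with material you already have, and the repair is precisely the paper's ordering of the argument. The reduction of the structure equations of Lemma~\ref{lem:struceqexcon} -- $k''=0$ from $L\equiv 0$ and $q\equiv 0$, then $k$ real from closedness, then $k'=0$ -- uses none of the unproven steps and shows that the canonical $(1,\!1)$-form $\kappa$ is $\nabla_\varphi$-parallel. Gauss--Bonnet in the form $\int_\Sigma \i\,\d\varphi=\int_\Sigma\i\bigl(|a|^2-\tfrac{1}{2}k\bigr)\omega\wedge\ov{\omega}=2\pi\chi(\Sigma)<0$ forces $k>0$ somewhere, and parallelism plus connectedness of $\Sigma$ forces $k$ to be nowhere vanishing, hence positive. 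Only now does the distinguished metric exist: by the equivariance $\left(R_{\rot}\right)^*k=r^2k$, the locus $k=1$ is an $\mathrm{SO}(2)$-reduction of $F^+_{[g]}$, i.e.\ the orthonormal coframe bundle of a unique $g\in[g]$ -- this replaces your a posteriori normalisation $c=1$ and certifies, rather than assumes, the trivial holonomy -- and on this reduction $0=\d k=\varphi+\ov{\varphi}$ gives $\nabla_\varphi={}^g\nabla$, after which $\d\varphi$ reads off Wang's equation $K_g=-1+2|C|^2_g$ directly, with $C=\alpha\otimes d\mu_g$ holomorphic and represented by $a$. This also dissolves the ``bookkeeping'' worry you flag at the end: normalising on the bundle avoids comparing constants between $\kappa$, $K_g$ and $|a|^2$ altogether. (One cosmetic slip: your curvature identity should read $\d(\varphi+\ov{\varphi})=\tfrac{3}{2}(k-\ov{k})\zeta_1\wedge\ov{\zeta_1}$, without the factor of $\i$; the conclusion that $k$ is real is unaffected.)
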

\begin{rmk}
It would be interesting to know if flat projective surfaces $(\Sigma,\mathfrak{p})$  exist for which $\mathcal{E}_{\mathfrak{p}}$ admits an extremal conformal structure that is not closed. 
\end{rmk}
\begin{proof}[Proof of Theorem~\ref{thm:charconvex}]
Assume $\mathfrak{p}$ is properly convex and let $([g],C)$ be the associated pair. Let $g$ the Blaschke metric satisfying~\eqref{eq:wangsequation} and $\varphi$ the connection on $F^+_{[g]}$ induced by the Levi-Civita connection of $g$. Recall that $\nabla_{\varphi}$ denotes the connection induced by $\varphi$ on $TM$, hence here we have $\nabla_{\varphi}={}^g\nabla$. From~\cite{MR2402597} we know that $\mathfrak{p}$ is defined by a connection of the form
$$
\nabla={}^g\nabla+2\Re(\alpha), 
$$
where $\alpha$ satisfies $\alpha\otimes d \mu_g=C$. A simple computation shows that a torsion-free connection $\varphi$ on $F^+_{[g]}$ induces a flat connection on $\Lambda^2(T^*\Sigma)$ if and only if $\nabla_{\varphi}$ has symmetric Ricci tensor. Since here $\nabla_{\varphi}={}^g\nabla$ is a Levi-Civita connection, it follows that the conformal structure defined by the Blaschke metric is closed. In addition, since $C$ is holomorphic, we have $\nabla_{\varphi}^{\prime\prime} C=0$ and furthermore, since $d\mu_g$ is parallel with respect to ${}^{g}\nabla$, it follows that $\nabla_{\varphi}^{\prime\prime} \alpha$ vanishes identically, thus showing that the conformal structure defined by the Blaschke metric is extremal for $\mathcal{E}_{\mathfrak{p}}$. 

Conversely, let $(\Sigma,\mathfrak{p})$ be a compact oriented flat projective surface of negativ Euler characteristic. Suppose $[g]$ is a closed and extremal conformal structure for $\mathfrak{p}$. We let $\varphi$ denote the induced connection on $F^+_{[g]}$ and $\alpha$ the corresponding section of $K_{\Sigma}^2\otimes \ov{K_{\Sigma}^{*}}$.   Lemma~\ref{lem:struceqexcon} implies that on $P^{\prime}_{[g]}\simeq F^+_{[g]}$ we have the following structure equations, where we write $\omega$ instead of $\zeta_1$ 
\begin{align*}
\d a&=a^{\prime}\omega-q \ov{\omega}+2a\varphi-a\ov{\varphi},\\
\d k&=k^{\prime}\omega+k^{\prime\prime}\ov{\omega}+k\varphi+k\ov{\varphi},\\
\d q&=q^{\prime}\omega+\frac{1}{2}\left(\ov{L}+\ov{k^{\prime\prime}}-2\ov{q}a\right)\ov{\omega}+2q \varphi,\\
\label{eq:struceqforA}\d \varphi&=\left(|a|^2+\frac{1}{2}k-\ov{k}\right)\omega\wedge\ov{\omega}.
\end{align*}
Since $[g]$ is extremal, we know that $Q$ and hence $q$ vanishes identically. Moreover, recall that $\mathfrak{p}$ is flat if and only if $L\equiv 0$, hence the third structure equation gives
$$
0=\d q=q^{\prime}\omega+\frac{1}{2}\ov{k^{\prime\prime}}\ov{\omega}
$$
showing that the functions $q^{\prime}$ and $k^{\prime\prime}$ vanish identically as well. Lemma~\ref{lem:struceqcplxconriemsurf} implies that $-(\varphi+\ov{\varphi})$ is the connection form of the connection induced by $\varphi$ on $\Lambda^2(T^*\Sigma)$. Since $[g]$ is closed, the induced connection is flat and hence $\d(\varphi+\ov{\varphi})$ must vanish identically. Thus we obtain
$$
0=\d(\varphi+\ov{\varphi})=\frac{3}{2}\left(\ov{k}-k\right)\omega\wedge\ov{\omega},
$$
showing that $k$ must be real-valued. Note that since $k$ is real-valued, we have 
$$
0=\d(k-\ov{k})=k^{\prime}\omega-\ov{k^{\prime}}\ov{\omega},
$$
so that $k^{\prime}$ vanishes identically. Finally, we have reduced the structure equations to
\begin{align}
\d a&=a^{\prime}\omega+2a\varphi-a\ov{\varphi},\\
\label{eq:kappaparallel}\d k&=k\varphi+k\ov{\varphi},\\
\d \varphi&=\left(|a|^2-\frac{1}{2}k\right)\omega\wedge\ov{\omega}. 
\end{align} 
The equivariance property of the tautological $1$-form $\omega$ on $F^+_{[g]}$ gives
$$
(R_{r\e^{\i\phi}})^*\omega=\frac{1}{r}\e^{\i\phi}\omega
$$
for all $r \e^{\i\phi} \in \mathrm{CO}(2)$. The function $k$ represents a $(1,\! 1)$-form $\kappa$ on $\Sigma$ which satisfies $\upsilon^*\kappa=\frac{\i}{2}k\omega\wedge\ov{\omega}$. Consequently, $k$ has the equivariance property $(R_{r\e^{\i\phi}})^*k=r^2k$. Recall that 
$$
\int_{\Sigma} \i \d\varphi=\int_{\Sigma}\i\left(|a|^2-\frac{1}{2}k\right)\omega\wedge\ov{\omega}= 2\pi\chi(\Sigma)<0,
$$ 
hence $k$ must be positive somewhere. Note that~\eqref{eq:kappaparallel} shows that the $(1,\! 1)$-form $\kappa$ represented by $k$ is parallel with respect to $\varphi$. Consequently, $k$ cannot vanish. Since $\Sigma$ is assumed to be connected, the equivariance property of $k$ implies that the equation $k=1$ defines a reduction $F^+_g \subset F^+_{[g]}$ to an $\mathrm{SO}(2)$-subbundle which is the orthonormal coframe bundle of a unique representative metric $g \in [g]$. On $F^+_g$ we have
$$
0=\d k=\varphi+\ov{\varphi},
$$
showing that we may write $\varphi=\i\phi$ for a unique $1$-form $\phi$ on $F^+_g$. Of course, $\phi$ is the Levi-Civita connection form of $g$ and hence using $\omega=\omega^1+\i\omega^2$, we obtain the familiar structure equation for the Levi-Civita connection of an oriented Riemannian $2$-manifold
$$
\d \phi=-\left(-1+2|a|^2\right)\omega^1\wedge\omega^2.
$$
We may define a cubic differential $C$ by setting $C=\alpha\otimes d\mu_g$ and since the pullback to $F^+_g$ of the area form of $g$ is $\omega^1\wedge\omega^2$, we conclude the the cubic differential $C$ is holomorphic and represented by the function $a$. Since
$$
\d \phi=-K_g\omega^1\wedge\omega^2,
$$
where $K_g$ denotes the Gauss curvature of $g$, we have
$$
K_g=-1+2|C|^2_g,
$$
where we use that $\upsilon^*|C|^2_g=|c|^2$. It follows that $g$ is the Blaschke metric associated to the pair $([g],C)$ and hence $\mathfrak{p}$ is a properly convex projective structure.  
\end{proof}

\subsection{Concluding remarks}

\begin{rmk}
Let $G_0$ be a real split simple Lie group and $S(G_0)$ the associated symmetric space. For our purposes we may take $G_0=\mathrm{SL}(3,\R)$ so that $S(G_0)=\mathrm{SL}(3,\R)/\mathrm{SO}(3)$, but the following results hold in the more general case. Suppose $\Sigma$ is a compact oriented surface of negative Euler characteristic and $\rho : \pi_1(\Sigma) \to G_0$ a representation in the Hitchin component for $G_0$. By a theorem of Corlette~\cite{MR965220}, the choice of a conformal structure $[g]$ on $\Sigma$ determines a map $\psi : \tilde{\Sigma} \to S(G_0)$ which is equivariant with respect to $\rho$ and harmonic with respect to the Riemannian metric on $S(G_0)$ and the conformal structure on $\tilde{\Sigma}$ obtained by lifting $[g]$. Furthermore, this map is unique up to post-composition with an isometry of $S(G_0)$. The energy density of the map $\psi$ descends to define a $2$-form $e_{\rho}([g])\,d\mu_g$ on $\Sigma$ and hence one may define an energy functional~\cite{MR887285},~\cite{MR2482204}
$$
\mathcal{E}_{\rho}([g])=\int_{\Sigma} e_{\rho}([g])\,d\mu_g. 
$$
The energy $\mathcal{E}_{\rho}([g])$ turns out to only depend on the diffeotopy class of $[g]$ and thus defines an energy functional on Teichm\"uller space for every representation $\rho$ in the Hitchin component of $G_0$. The Hopf differential of the map $\psi$ yields a holomorphic quadratic differential which descends to $\Sigma$ as well and it is conjectured~\cite{MR2342806},~\cite{MR2482204}, that for every representation in the Hitchin component there exists a unique conformal structure on $\Sigma$ whose associated Hopf differential vanishes identically. For such a conformal structure the mapping $\psi$ is harmonic and conformal, hence minimal. In~\cite{MR3583351} Labourie proves the existence of a unique $\rho$-equivariant minimal mapping $\psi : \tilde{\Sigma} \to S(G_0)$ in the case where $G_0$ has rank two (the case $G_0=\mathrm{SL}(3,\R)$ was treated previously in~\cite{MR2402597}). Labourie also shows the existence of such a mapping without any assumption on the rank of $G_0$ in~\cite{MR2482204}. Moreover, in~\cite{MR3583351}, the energy bound  
$$
\mathcal{E}_{\rho}([g])\geqslant -2\pi \chi(\Sigma)
$$
is obtained, with equality if and only if $\rho$ is a~\textit{Fuchsian representation}. 

Given our results it is natural to expect a relation between $\mathcal{E}_{\rho}$ and our functional $\mathcal{E}_{\mathfrak{p}}$, where $\rho$ is an element in the $\mathrm{SL}(3,\R)$ Hitchin component and $\mathfrak{p}$ denotes its associated properly convex projective structure. However, relating the representation $\rho$ to its associated projective structure $\mathfrak{p}$ in a way that would allow to establish the expected relation proves to be quite difficult. This may be investigated elsewhere.
\end{rmk}

\begin{rmk}
Although we are currently unable to prove this, the previous remark suggests that in the case of a properly convex compact oriented projective surface $(\Sigma,\mathfrak{p})$ of negative Euler characteristic, the conformal equivalence class of the Blaschke metric  is in fact the unique critical point of $\mathcal{E}_{\mathfrak{p}}$. As a partial result towards this claim, it is shown in~\cite{arXiv:1804.04616} that if a properly convex compact oriented projective surface $(\Sigma,\mathfrak{p})$ of negative Euler characteristic admits a compatible Weyl connection, then $\mathfrak{p}$ arises from a hyperbolic metric. 
\end{rmk}

\begin{rmk}
In~\cite{MR3384876}, it is shown that for a compact oriented projective surface $(\Sigma,\mathfrak{p})$ of negative Euler characteristic the functional $\mathcal{E}_{\mathfrak{p}}$ admits at most one absolute minimiser $[g]$ (i.e.~a conformal structure $[g]$ such that $\mathcal{E}_{\mathfrak{p}}([g])=0$). 
\end{rmk}

\begin{rmk}
In~\cite{arXiv:1609.08033}, the author shows that properly convex projective surfaces arise from torsion-free connections on $T\Sigma$ that admit an interpretation as Lagrangian minimal surfaces. Some of their properties are studied in~\cite{arXiv:1706.03554}. It would be interesting to relate these minimal Lagrangian surfaces to the minimal mapping $\psi$ constructed in~\cite{MR2402597}.   
\end{rmk}

\begin{rmk}
We have seen that oriented projective structures admitting extremal conformal structures arise from pairs $(\varphi,\alpha)$ on a Riemann surface $(\Sigma,[g])$, where $\alpha$ satisfies $\nabla^{\prime\prime}_{\varphi}\alpha\equiv 0$. The torsion-free connection $\varphi$ on $F^+_{[g]}$ induces a holomorphic line bundle structure $\ov{\partial}_E$ on $E=K_{\Sigma}^2\otimes \ov{K^*_{\Sigma}}$ and conversely, it is easy see that for every choice of a holomorphic line bundle structure $\ov{\partial}_E$ on $E$ there exists a unique torsion-free connection $\varphi$ on $F^+_{[g]}$ inducing $\ov{\partial}_E$. Hence we may equivalently describe these projective structures in terms of a pair $(\ov{\partial}_E,\alpha)$ satisfying $\ov{\partial}_E\alpha\equiv 0$.  
\end{rmk}

\begin{rmk}
The so-called Einstein affine hypersurface structures introduced in~\cite{MR3137456} also provide examples of projective surfaces admitting an extremal conformal structure. 
\end{rmk}

\appendix

\section{A Gauss--Bonnet type identity}\label{Ap:GB}

As a by-product of our considerations, we obtain a Gauss--Bonnet type identity:  
\begin{thm}\label{main}
Let $(\Sigma,\mathfrak{p})$ a compact oriented projective surface. Then for every section $s : \Sigma \to (Y,\Omega_\mathfrak{p})$ we have
\begin{equation}\label{gaussbonnet}
\int_{\Sigma}s^*\Omega_\mathfrak{p}=2\pi\chi(\Sigma). 
\end{equation}
\end{thm}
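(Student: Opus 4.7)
The strategy is to show that $\tau^*\Omega_{\mathfrak{p}}$ is exact on the Cartan bundle $\projb$ with primitive $-\mathrm{Im}(\varphi)$, and then to apply the first-Chern-class identity~\eqref{eq:gaussbonnet} for $K_\Sigma^{-1}$.

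First I would verify, by a direct calculation on $\projb$ combining the defining formula
$$
\tau^*\Omega_Y=-\tfrac{\i}{4}\bigl(\zeta_1\wedge\ov{\zeta_3}+\zeta_3\wedge\ov{\zeta_1}+\zeta_2\wedge\ov{\zeta_2}\bigr)
$$
with the structure equation~\eqref{struceqpsi} and its complex conjugate (using $\overline{\zeta_i\wedge\overline{\zeta_j}}=-\zeta_j\wedge\overline{\zeta_i}$), that
$$
\tau^*\Omega_{\mathfrak{p}}\;=\;\tfrac{\i}{2}\bigl(\d\varphi-\d\ov{\varphi}\bigr)\;=\;-\d\,\mathrm{Im}(\varphi)
$$
holds globally on $\projb$. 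This is the computational heart of the argument.

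Next, given any section $s:\Sigma\to Y$, form the pullback square
$$
\begin{xy}
(0,0)*+{s^*\projb}="A";(30,0)*+{\projb}="B";
(0,-15)*+{\Sigma}="C";(30,-15)*+{Y}="D";
{\ar "A";"B"}?*!/_3mm/{\iota};{\ar "A";"C"}?*!/^3mm/{p};
{\ar "B";"D"}?*!/_3mm/{\tau};{\ar "C";"D"}?*!/_3mm/{s};
\end{xy}
$$
so that $s^*\projb$ is a principal $\mathrm{CO}(2)$-bundle over $\Sigma$ and
$$
p^*(s^*\Omega_{\mathfrak{p}})=\iota^*\tau^*\Omega_{\mathfrak{p}}=-\d\,\mathrm{Im}(\iota^*\varphi).
$$
Because $\mathrm{CO}(2)$ is abelian and both $\theta^0_0$ and $\theta^1_2-\theta^2_1$ in~\eqref{eq:defcurvandcon} are $\mathrm{Ad}(\mathrm{CO}(2))$-invariant, $\iota^*\varphi$ is a genuine principal $\mathrm{CO}(2)$-connection form on $s^*\projb$. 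Hence its curvature $\d(\iota^*\varphi)$ descends to a complex-valued $2$-form $\Phi$ on $\Sigma$ with $p^*\Phi=\d(\iota^*\varphi)$. Injectivity of $p^*$ on $2$-forms then gives
$$
s^*\Omega_{\mathfrak{p}}\;=\;-\,\mathrm{Im}(\Phi)
$$
as a $2$-form on $\Sigma$.

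To finish, observe that the principal bundle $s^*\projb$ is isomorphic to the bundle $F^+_{[g]}$ of complex-linear coframes of $(\Sigma,[g])$, where $[g]:\Sigma\to Z$ denotes the conformal structure obtained by composing $s$ with $Y\to Z$: indeed, since the fibres of $Y\to Z$ are contractible ${\R}_2$'s, the section $s$ is homotopic to the canonical lift $\widetilde{[g]}$, and pulling back $\projb\to Y$ by homotopic maps yields isomorphic principal bundles. Consequently, $\Phi$ is the curvature of a $\mathbb{C}^*$-connection on the associated line bundle $K_\Sigma^{-1}$, and~\eqref{eq:gaussbonnet} yields $\int_\Sigma \i\,\Phi=2\pi\chi(\Sigma)$. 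Taking real parts gives
$$
\int_\Sigma s^*\Omega_{\mathfrak{p}}\;=\;-\int_\Sigma\mathrm{Im}(\Phi)\;=\;\mathrm{Re}\!\left(\int_\Sigma \i\,\Phi\right)\;=\;2\pi\chi(\Sigma),
$$
as desired. The chief technical point is the last one: namely, confirming that for an \emph{arbitrary} section $s$ (not just the canonical lift $\widetilde{[g]}$), the pulled-back connection $\iota^*\varphi$ really does induce a connection on the topological line bundle $K_\Sigma^{-1}$, so that~\eqref{eq:gaussbonnet} is applicable; once this is in place, the identity on $\projb$ and Chern--Weil theory do the rest.
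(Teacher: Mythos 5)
Your proposal is correct, but it is organised differently from the paper's proof, and the difference is instructive. The paper first kills the dependence on the section topologically: since the fibres of $Y\to\Sigma$ are contractible, $\pi^*:H^2(\Sigma)\to H^2(Y)$ is an isomorphism, so all sections induce the same map on second cohomology, and it then suffices to integrate over the canonical lift $\widetilde{[g]}$ of a conformal structure, where the relations $\zeta_2=2\ov{a}\,\ov{\zeta_1}$, $\zeta_3=k\zeta_1+2\ov{q}\,\ov{\zeta_1}$ and the structure equation for $\d\varphi$ on $\projb^{\prime}_{[g]}$ reduce the integral to $\frac{\i}{2}\int_{\Sigma}(\d\varphi-\d\ov{\varphi})=2\pi\chi(\Sigma)$ via~\eqref{eq:gaussbonnet}. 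You instead prove the stronger global identity $\tau^*\Omega_{\mathfrak{p}}=\frac{\i}{2}\left(\d\varphi-\d\ov{\varphi}\right)=-\d\,\Im(\varphi)$ on all of $\projb$ -- which does follow from~\eqref{struceqpsi} exactly as you indicate, and which the paper only uses in restricted form -- and then handle an \emph{arbitrary} section directly: $s^*\Omega_{\mathfrak{p}}=-\Im(\Phi)$ with $\Phi$ the curvature of the restricted connection $\iota^*\varphi$ on $s^*\projb$, turning the theorem into a Chern--Weil statement. Note that the topological input has not disappeared but merely moved: you use contractibility of the $\R_2$-fibres of $Y\to Z$ to homotope $s$ to $\widetilde{[g]}$ and thereby identify $s^*\projb\simeq \projb^{\prime}_{[g]}\simeq F^+_{[g]}$, where the paper uses contractibility of the fibres of $Y\to\Sigma$ for the $H^2$-isomorphism; both routes then invoke~\eqref{eq:gaussbonnet}. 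What your version buys is a slightly finer conclusion -- it exhibits $\frac{1}{2\pi}s^*\Omega_{\mathfrak{p}}$ as a de Rham representative of (the real part of $\i$ times) $c_1(K_{\Sigma}^{-1})$ for \emph{every} section, not merely an equality of integrals -- at the cost of the bundle-identification step. Two points there deserve to be made explicit. First, to know that $\iota^*\varphi$ is a principal connection you need not only the $\mathrm{Ad}$-invariance you cite but also that $\varphi$ reproduces the fundamental vector fields, which is property (iii) of the Cartan connection, exactly as used in the paper's proof of Proposition~\ref{ppn:liftsplit}. Second, for $s\neq\widetilde{[g]}$ the connection $\iota^*\varphi$ on $s^*\projb\simeq F^+_{[g]}$ is in general \emph{not} torsion-free: on $s^*\projb$ one computes from~\eqref{righttranskaeinf} and $\zeta_2=2\ov{a}\,\ov{\zeta_1}+c\zeta_1$ that $\d\zeta_1=-\varphi\wedge\zeta_1+\frac{c}{2}\,\ov{\zeta_1}\wedge\zeta_1$ with $c\not\equiv 0$, whereas~\eqref{eq:gaussbonnet} is derived in \S\ref{subsec:confcon} in the torsion-free setting; this is harmless because $c_1$ is independent of the choice of connection, but your appeal to~\eqref{eq:gaussbonnet} implicitly uses this mild extension and should say so.
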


\begin{proof}
Since $\pi : Y \to \Sigma$ admits smooth global sections, it follows that $\pi^* : H^k(\Sigma) \to H^k(Y)$ is injective. Note that by construction the fibres of the bundle $\pi : Y \to \Sigma$ are diffeomorphic to $\left(\R_2\rtimes \mathrm{GL}^+(2,\R)\right)/\mathrm{CO}(2)$ and hence diffeomorphic to $\R_2\times D^2$. In particular, the fibre is contractible, thus we have $H^2(Y)\simeq H^2(\Sigma)\simeq \R$ showing that $\pi^* : H^2(\Sigma) \to H^2(Y)$ is an isomorphism. It follows that any two sections of $Y \to \Sigma$ induce the same map on the second de Rham cohomology groups. It is therefore sufficient to construct a section $s : \Sigma \to Y$ for which~\eqref{gaussbonnet} holds. From the proof of the Lemma~\ref{lem:lift} we know that for every conformal structure $[g] : \Sigma \to Z$ there exists a lift $\widetilde{[g]} : \Sigma \to Y$ so that on the pullback bundle $\projb_{[g]}^{\prime}$ we have
$$
\zeta_2=2\ov{a}\,\ov{\zeta_1}, \qquad \zeta_3=k\zeta_1+2\ov{q}\,\ov{\zeta_1},
$$
Since
$$
\tau^*\Omega_\mathfrak{p}=-\frac{\i}{4}\left(\zeta_1\wedge\ov{\zeta_3}+\zeta_3\wedge\ov{\zeta_1}+\zeta_2\wedge\ov{\zeta_2}\right),
$$
computing as in Proposition~\ref{lem:dirichlet} and using the above identities for $\zeta_2,\zeta_3$ gives
\begin{align*}
\int_{\Sigma} \widetilde{[g]}^*\Omega_\mathfrak{p}&=-\frac{\i}{4}\int_{\Sigma}\left(k+\ov{k}-4|a|^2\right)\zeta_1\wedge\ov{\zeta_1}=\frac{\i}{2}\int_{\Sigma}\d\varphi-\d\ov{\varphi}=2\pi\chi(\Sigma).
\end{align*}
\end{proof}

\providecommand{\noopsort}[1]{}
\providecommand{\mr}[1]{\href{http://www.ams.org/mathscinet-getitem?mr=#1}{MR~#1}}
\providecommand{\zbl}[1]{\href{http://www.zentralblatt-math.org/zmath/en/search/?q=an:#1}{Zbl~#1}}
\providecommand{\arxiv}[1]{\href{http://www.arxiv.org/abs/#1}{arXiv:#1}}
\providecommand{\doi}[1]{\href{http://dx.doi.org/#1}{DOI}}
\providecommand{\MR}{\relax\ifhmode\unskip\space\fi MR }
% \MRhref is called by the amsart/book/proc definition of \MR.
\providecommand{\MRhref}[2]{%
  \href{http://www.ams.org/mathscinet-getitem?mr=#1}{#2}
}
\providecommand{\href}[2]{#2}


\begin{thebibliography}{10}

\bibitem{arXiv:1002.1767}
\bgroup\scshape{}D.~Baraglia\egroup{}, ${G}_2$ geometry and integrable systems,
  {P}h{D} thesis, 2010. \arxiv{1002.1767}

\bibitem{MR3039771}
\bgroup\scshape{}Y.~Benoist\egroup{} and \bgroup\scshape{}D.~Hulin\egroup{},
  Cubic differentials and finite volume convex projective surfaces,
  \emph{Geom. Topol.} \textbf{17} (2013), 595--620. \mr{3039771}\;
  \zbl{1266.30030}\;

\bibitem{MR867684}
\bgroup\scshape{}A.~L. Besse\egroup{}, \emph{Einstein manifolds},
  \emph{Ergebnisse der Mathematik und ihrer Grenzgebiete (3)} \textbf{10},
  Springer-Verlag, Berlin, 1987. \mr{867684}\;  \zbl{0613.53001}\;

\bibitem{MR772125}
\bgroup\scshape{}R.~L. Bryant\egroup{}, A duality theorem for {W}illmore
  surfaces,  \emph{J. Differential Geom.} \textbf{20} (1984), 23--53.
  \mr{772125}\;  \zbl{0555.53002}\;

\bibitem{MR1427757}
\bgroup\scshape{}R.~L. Bryant\egroup{}, Classical, exceptional, and exotic
  holonomies: a status report,  in \emph{Actes de la {T}able {R}onde de
  {G}\'eom\'etrie {D}iff\'erentielle ({L}uminy, 1992)}, \emph{S\'emin. Congr.}
  \textbf{1}, Soc. Math. France, Paris, 1996, pp.~93--165. \mr{1427757}\;
  \zbl{0882.53014}\;

\bibitem{MR2581355}
\bgroup\scshape{}R.~L. Bryant\egroup{}, \bgroup\scshape{}M.~Dunajski\egroup{},
  and \bgroup\scshape{}M.~Eastwood\egroup{}, Metrisability of two-dimensional
  projective structures,  \emph{J. Differential Geom.} \textbf{83} (2009),
  465--499. \mr{2581355}\;  \zbl{1196.53014}\;

\bibitem{MR3158041}
\bgroup\scshape{}A.~{\v{C}}ap\egroup{}, \bgroup\scshape{}A.~R. Gover\egroup{},
  and \bgroup\scshape{}H.~R. Macbeth\egroup{}, Einstein metrics in projective
  geometry,  \emph{Geom. Dedicata} \textbf{168} (2014), 235--244.
  \mr{3158041}\;

\bibitem{MR2532439}
\bgroup\scshape{}A.~{\v{C}}ap\egroup{} and
  \bgroup\scshape{}J.~Slov{\'a}k\egroup{}, \emph{Parabolic geometries. {I}},
  \emph{Mathematical Surveys and Monographs} \textbf{154}, American
  Mathematical Society, Providence, RI, 2009, Background and general theory.
  \mr{2532439}\;  \zbl{1183.53002}\;

\bibitem{MR1504846}
\bgroup\scshape{}E.~Cartan\egroup{}, Sur les vari\'et\'es \`a connexion
  projective,  \emph{Bull. Soc. Math. France} \textbf{52} (1924), 205--241.
  \mr{1504846}\;  \zbl{50.0500.02}\;

\bibitem{MR1145415}
\bgroup\scshape{}S.~Choi\egroup{} and \bgroup\scshape{}W.~M. Goldman\egroup{},
  Convex real projective structures on closed surfaces are closed,  \emph{Proc.
  Amer. Math. Soc.} \textbf{118} (1993), 657--661. \mr{1145415}\;
  \zbl{0810.57005}\;

\bibitem{MR965220}
\bgroup\scshape{}K.~Corlette\egroup{}, Flat {$G$}-bundles with canonical
  metrics,  \emph{J. Differential Geom.} \textbf{28} (1988), 361--382.
  \mr{965220}\;  \zbl{04107768}\;

\bibitem{MR887285}
\bgroup\scshape{}S.~K. Donaldson\egroup{}, Twisted harmonic maps and the
  self-duality equations,  \emph{Proc. London Math. Soc. (3)} \textbf{55}
  (1987), 127--131. \mr{887285}\;  \zbl{0634.53046}\;

\bibitem{MR728412}
\bgroup\scshape{}M.~Dubois-Violette\egroup{}, Structures complexes au-dessus
  des vari\'et\'es, applications,  in \emph{Mathematics and physics},
  \emph{Progr. Math.} \textbf{37}, Birkh\"auser Boston, Boston, MA, 1983,
  pp.~1--42. \mr{728412}\;  \zbl{0522.53029}\;

\bibitem{MR3432157}
\bgroup\scshape{}D.~Dumas\egroup{} and \bgroup\scshape{}M.~Wolf\egroup{},
  Polynomial cubic differentials and convex polygons in the projective plane,
  \emph{Geom. Funct. Anal.} \textbf{25} (2015), 1734--1798. \mr{3432157}\;
  \zbl{06526259}\;

\bibitem{MR3833818}
\bgroup\scshape{}M.~Dunajski\egroup{} and \bgroup\scshape{}T.~Mettler\egroup{},
  Gauge theory on projective surfaces and anti-self-dual {E}instein metrics in
  dimension four,  \emph{J. Geom. Anal.} \textbf{28} (2018), 2780--2811.
  \mr{3833818}\;

\bibitem{MR3137456}
\bgroup\scshape{}D.~J.~F. Fox\egroup{}, Einstein-like geometric structures on
  surfaces,  \emph{Ann. Sc. Norm. Super. Pisa Cl. Sci. (5)} \textbf{12} (2013),
  499--585. \mr{3137456}\;

\bibitem{MR2342806}
\bgroup\scshape{}W.~M. Goldman\egroup{} and \bgroup\scshape{}R.~A.
  Wentworth\egroup{}, Energy of twisted harmonic maps of {R}iemann surfaces,
  in \emph{In the tradition of {A}hlfors-{B}ers. {IV}}, \emph{Contemp. Math.}
  \textbf{432}, Amer. Math. Soc., Providence, RI, 2007, pp.~45--61.
  \mr{2342806}\;  \zbl{1137.57002}\;

\bibitem{MR2981818}
\bgroup\scshape{}O.~Guichard\egroup{} and
  \bgroup\scshape{}A.~Wienhard\egroup{}, Anosov representations: domains of
  discontinuity and applications,  \emph{Invent. Math.} \textbf{190} (2012),
  357--438. \mr{2981818}\;  \zbl{1270.20049}\;

\bibitem{MR699802}
\bgroup\scshape{}N.~Hitchin\egroup{}, Complex manifolds and {E}instein's
  equations,  in \emph{Twistor geometry and nonlinear systems}, \emph{Lecture
  Notes in Math.} \textbf{970}, Springer, Berlin, 1982, pp.~73--99.
  \mr{699802}\;  \zbl{0507.53025}\;

\bibitem{MR887284}
\bgroup\scshape{}N.~Hitchin\egroup{}, The self-duality equations on a {R}iemann
  surface,  \emph{Proc. London Math. Soc. (3)} \textbf{55} (1987), 59--126.
  \mr{887284}\;  \zbl{0634.53045}\;

\bibitem{MR1174252}
\bgroup\scshape{}N.~Hitchin\egroup{}, Lie groups and {T}eichm\"uller space,
  \emph{Topology} \textbf{31} (1992), 449--473. \mr{1174252}\;
  \zbl{0769.32008}\;

\bibitem{MR3525096}
\bgroup\scshape{}N.~Hitchin\egroup{}, Higgs bundles and diffeomorphism groups,
  in \emph{Surveys in differential geometry 2016. {A}dvances in geometry and
  mathematical physics}, \emph{Surv. Differ. Geom.} \textbf{21}, Int. Press,
  Somerville, MA, 2016, pp.~139--163. \mr{3525096}\;  \zbl{06665004}\;

\bibitem{MR3329885}
\bgroup\scshape{}I.~Kim\egroup{} and \bgroup\scshape{}A.~Papadopoulos\egroup{},
  Convex real projective structures and {H}ilbert metrics,  in \emph{Handbook
  of {H}ilbert geometry}, \emph{IRMA Lect. Math. Theor. Phys.} \textbf{22},
  Eur. Math. Soc., Z\"urich, 2014, pp.~307--338. \mr{3329885}\;

\bibitem{MR0355886}
\bgroup\scshape{}S.~Kobayashi\egroup{}, \emph{Transformation groups in
  differential geometry}, Springer-Verlag, New York, 1972, {E}rgebnisse der
  {M}athematik und ihrer {G}renzgebiete, Band 70. \mr{0355886}\;
  \zbl{0829.53023}\;

\bibitem{MR909698}
\bgroup\scshape{}S.~Kobayashi\egroup{}, \emph{Differential geometry of complex
  vector bundles}, \emph{Publications of the Mathematical Society of Japan}
  \textbf{15}, Princeton University Press, Princeton, NJ; Princeton University
  Press, Princeton, NJ, 1987, Kan\^o Memorial Lectures, 5. \mr{909698}\;
  \zbl{0708.53002}\;

\bibitem{MR0159284}
\bgroup\scshape{}S.~Kobayashi\egroup{} and \bgroup\scshape{}T.~Nagano\egroup{},
  On projective connections,  \emph{J. Math. Mech.} \textbf{13} (1964),
  215--235. \mr{0159284}\;  \zbl{0117.39101}\;

\bibitem{MR2373231}
\bgroup\scshape{}F.~Labourie\egroup{}, Cross ratios, surface groups, {${\rm
  PSL}(n,{\bf R})$} and diffeomorphisms of the circle,  \emph{Publ. Math. Inst.
  Hautes \'Etudes Sci.} (2007), 139--213. \mr{2373231}\;  \zbl{1203.30044}\;

\bibitem{MR2402597}
\bgroup\scshape{}F.~Labourie\egroup{}, Flat projective structures on surfaces
  and cubic holomorphic differentials,  \emph{Pure Appl. Math. Q.} \textbf{3}
  (2007), 1057--1099. \mr{2402597}\;  \zbl{1158.32006}\;

\bibitem{MR2482204}
\bgroup\scshape{}F.~Labourie\egroup{}, Cross ratios, {A}nosov representations
  and the energy functional on {T}eichm\"uller space,  \emph{Ann. Sci. \'Ec.
  Norm. Sup\'er. (4)} \textbf{41} (2008), 437--469. \mr{2482204}\;
  \zbl{1160.37021}\;

\bibitem{MR3583351}
\bgroup\scshape{}F.~Labourie\egroup{}, Cyclic surfaces and {H}itchin components
  in rank 2,  \emph{Ann. of Math. (2)} \textbf{185} (2017), 1--58.
  \mr{3583351}\;  \zbl{06686583}\;

\bibitem{MR1979367}
\bgroup\scshape{}C.~Lebrun\egroup{} and \bgroup\scshape{}L.~J. Mason\egroup{},
  Zoll manifolds and complex surfaces,  \emph{J. Differential Geom.}
  \textbf{61} (2002), 453--535. \mr{1979367}\;  \zbl{1070.53022}\;

\bibitem{19.0317.02}
\bgroup\scshape{}R.~{Liouville}\egroup{}, {Sur une classe d'\'equations
  diff\'erentielles, parmi lesquelles, en particulier, toutes celles des lignes
  g\'eod\'esiques se trouvent comprises.},  \emph{{C. R. Acad. Sci., Paris}}
  \textbf{105} (1888), 1062--1064. \zbl{19.0317.02}\;

\bibitem{MR2743442}
\bgroup\scshape{}J.~Loftin\egroup{}, Survey on affine spheres,  in
  \emph{Handbook of geometric analysis, {N}o. 2}, \emph{Adv. Lect. Math. (ALM)}
  \textbf{13}, Int. Press, Somerville, MA, 2010, pp.~161--191. \mr{2743442}\;
  \zbl{1214.53013}\;

\bibitem{MR1828223}
\bgroup\scshape{}J.~C. Loftin\egroup{}, Affine spheres and convex
  {$\Bbb{RP}^n$}-manifolds,  \emph{Amer. J. Math.} \textbf{123} (2001),
  255--274. \mr{1828223}\;  \zbl{0997.53010}\;

\bibitem{MR3212870}
\bgroup\scshape{}V.~S. Matveev\egroup{} and
  \bgroup\scshape{}A.~Trautman\egroup{}, A {C}riterion for {C}ompatibility of
  {C}onformal and {P}rojective {S}tructures,  \emph{Comm. Math. Phys.}
  \textbf{329} (2014), 821--825. \mr{3212870}\;  \zbl{1298.53021}\;

\bibitem{MR3043749}
\bgroup\scshape{}T.~Mettler\egroup{}, Reduction of {$\beta$}-integrable
  2-{S}egre structures,  \emph{Comm. Anal. Geom.} \textbf{21} (2013), 331--353.
  \mr{3043749}\;  \zbl{1279.53026}\;

\bibitem{MR3144212}
\bgroup\scshape{}T.~Mettler\egroup{}, Weyl metrisability of two-dimensional
  projective structures,  \emph{Math. Proc. Cambridge Philos. Soc.}
  \textbf{156} (2014), 99--113. \mr{3144212}\;  \zbl{06283094}\;

\bibitem{MR3384876}
\bgroup\scshape{}T.~Mettler\egroup{}, Geodesic rigidity of conformal
  connections on surfaces,  \emph{Math. Z.} \textbf{281} (2015), 379--393.
  \mr{3384876}\;  \zbl{1326.53018}\;

\bibitem{arXiv:1609.08033}
\bgroup\scshape{}T.~Mettler\egroup{}, Minimal {L}agrangian connections on
  compact surfaces, 2016. \arxiv{1609.08033}

\bibitem{arXiv:1706.03554}
\bgroup\scshape{}T.~Mettler\egroup{} and \bgroup\scshape{}G.~P.
  Paternain\egroup{}, Holomorphic differentials, thermostats and {A}nosov
  flows, 2017. \arxiv{1706.03554}

\bibitem{arXiv:1804.04616}
\bgroup\scshape{}T.~Mettler\egroup{} and \bgroup\scshape{}G.~P.
  Paternain\egroup{}, Convex projective surfaces with compatible {W}eyl
  connection are hyperbolic, 2018. \arxiv{1804.04616}

\bibitem{MR812312}
\bgroup\scshape{}N.~R. O'Brian\egroup{} and \bgroup\scshape{}J.~H.
  Rawnsley\egroup{}, Twistor spaces,  \emph{Ann. Global Anal. Geom.} \textbf{3}
  (1985), 29--58. \mr{812312}\;  \zbl{0526.53057}\;

\bibitem{MR1178538}
\bgroup\scshape{}C.~P. Wang\egroup{}, Some examples of complete hyperbolic
  affine {$2$}-spheres in {${\bf R}^3$},  in \emph{Global differential geometry
  and global analysis ({B}erlin, 1990)}, \emph{Lecture Notes in Math.}
  \textbf{1481}, Springer, Berlin, 1991, pp.~271--280. \mr{1178538}\;
  \zbl{0743.53004}\;

\bibitem{zbMATH02603060}
\bgroup\scshape{}H.~{Weyl}\egroup{}, {Zur Infinitesimalgeometrie: Einordnung
  der projektiven und der konformen Auffassung.},  \emph{{Nachr. Ges. Wiss.
  G\"ottingen, Math.-Phys. Kl.}} \textbf{1921} (1921), 99--112.
  \zbl{48.0844.04}\;

\end{thebibliography}
\end{document}